\documentclass[reqno, 12pt]{amsart}

\usepackage{amsmath}
\usepackage{amsthm}
\usepackage{amsfonts}
\usepackage{amssymb}
\usepackage{mathrsfs}
\usepackage{epsfig}
\usepackage{slashed}
\usepackage{mathtools}
\usepackage{enumerate}
\usepackage{fullpage}
\usepackage{xcolor}
\usepackage{comment}
\usepackage[hidelinks]{hyperref}
\usepackage{framed}

\usepackage{enumitem}

\newcommand{\mc}{\mathcal}

\renewcommand{\Re}{\mathrm{Re}\,}

\newcommand{\ran}{\mathrm{ran}\,}

\newcommand{\N}{\mathbb{N}}
\newcommand{\R}{\mathbb{R}}
\newcommand{\C}{\mathbb{C}}

\renewcommand{\S}{\mathbb{S}}
\newcommand*\closure[1]{\overline{#1}}

\DeclarePairedDelimiter\abs{\lvert}{\rvert}

\DeclareMathOperator{\vol}{vol}

\newcommand\restr[2]{{  \left.\kern-\nulldelimiterspace #1 \vphantom{\big|}  \right|_{#2} }}

\newcommand\norm[1]{\left\Vert#1\right\Vert}

\newtheorem{lemma}{Lemma}[section]
\newtheorem{theorem}[lemma]{Theorem}

\newtheorem{proposition}[lemma]{Proposition}
\newtheorem{remark}[lemma]{Remark}

\theoremstyle{definition}
\newtheorem{definition}[lemma]{Definition}
\newtheorem*{definition*}{Definition}

\newtheorem*{assumption*}{Assumption}

\numberwithin{equation}{section}

\title[]{Stable blowup for the harmonic map heat flow into perturbed spheres}

\author{Alexander Wittenstein}
\address{Karlsruhe Institute of Technology, Institute for Analysis,  Englerstra{\ss}e 2, 76131 Karlsruhe, Germany}
\email{alexander.wittenstein@kit.edu}

\thanks{The author was funded by the
Deutsche Forschungsgemeinschaft (DFG, German Research Foundation) - Project-ID 258734477 - SFB 1173.}

\begin{document}

\begin{abstract}
    We prove stable self-similar blowup for the harmonic map heat flow in dimensions 3 to 6 for target manifolds which are slightly perturbed versions of the round sphere. Starting from the known blowup solution for the sphere, we construct self-similar blowup solutions for this class of target manifolds and prove that these solutions are asymptotically nonlinear stable.
    
    \noindent Although the blowup solution for the sphere is not explicitly known in this case, we can still use perturbative methods similar to those developed by Donninger, Schörkhuber, and the author in \cite{DonSchWit25} who consider a related problem for the wave maps equation.
\end{abstract}

\maketitle

\section{Introduction}

A harmonic map $U : (M^m,g) \to (N^d,h)$ between Riemannian manifolds is a critical point (with respect to compactly supported variations) of the Dirichlet energy \footnote{Here and throughout we use the Einstein summation convention where Latin indices range from 1 to $m$ and Greek indices from 1 to $d$.}
\begin{align}\label{Dirichlet energy}
    E[U] = \frac{1}{2} \int_M g^{ij}(x) \, h_{\alpha \beta}(U(x)) \, \frac{\partial U^{\alpha}}{\partial x^i}(x) \frac{\partial U^{\beta}}{\partial x^j}(x) \, d\vol_g(x).
\end{align}

The associated Euler-Lagrange equations are given by
\begin{align*}
    \tau(U) := \Delta_g U^{\alpha} + g^{ij} \, \Gamma_{\beta \gamma}^{\alpha} \circ U \, \frac{\partial U^{\beta}}{\partial x^i} \frac{\partial U^{\gamma}}{\partial x^j} = 0, \quad \text{for } ~ \alpha = 1,...,d
\end{align*}

where $\Delta_g$ is the Laplace-Beltrami operator on $M$ and $\Gamma_{\beta \gamma}^{\alpha}$ are the Christoffel-symbols on $N$.

In their seminal work \cite{EelSam64} Eells and Sampson established the fact that when the target manifold has nonpositive sectional curvature, one can obtain harmonic maps by evolving an initial map $U_0$ through their negative $L^2-$gradient flow via the so-called harmonic map heat flow
\begin{align}\label{Harmonic map heat flow}
    \partial_t U = \tau(U).
\end{align}

Their result shows that under these curvature assumptions the flow exists for all time and converges to a harmonic map in the given homotopy class. In settings beyond these curvature assumptions it is by now well-known that the flow may develop singularities in finite time.

A natural first step towards understanding this phenomenon is to look at the scaling properties of the equation. If we consider maps $U : [0,\infty) \times \R^d \to (N^d,h)$, the harmonic map heat flow \eqref{Harmonic map heat flow} is invariant under the parabolic scaling
\begin{align}\label{parabolic scaling}
    U_{\lambda}(t,x) := U(t/\lambda^2, x/\lambda)
\end{align}

and under this transformation the Dirichlet energy \eqref{Dirichlet energy} scales according to
\begin{align*}
    E[U_{\lambda}](t) = \lambda^{d-2} E[U](t/\lambda^2).
\end{align*}

This separates the analysis into the energy-critical case $d = 2$ and the energy-supercritical-case $d \geq 3$. The first constructions of finite-time singularities in these two regimes were obtained by Chang, Ding and Ye \cite{ChaDinYe92} for $d = 2$ and by Coron and Ghidaglia \cite{CorGhi89} for $d \geq 3$, both for maps into the round sphere. In the supercritical case Struwe showed in \cite{Str88} that if a solution develops a finite-time singularity, then any nontrivial blowup limit (after suitable rescaling) is either a self-similar shrinking solution or a harmonic sphere. That both of these scenarios can actually occur was proven in for example \cite{Fan99} by Fan and \cite{Bie15} by Biernat. 

In our work we focus on the first type of singularity formation, namely the one described by self-similar solutions. Since the target manifolds, which we will consider later on, are warped product manifolds which can be seen as small perturbations of the round sphere, we first start with the sphere as the target manifold and have a look at the \textit{corotational} harmonic map heat flow  into it.

More precisely, we consider maps $U: [0,\infty) \times \R^d \to \S^d$. Using spherical coordinates $(r,\omega)$ on $\R^d$ and polar coordinates $(u,\Omega)$ on $\S^d$ the corotational ansatz implies that $U$ can be written as
\begin{align*}
    U(t,r,\omega) = (u(t,r), \omega)
\end{align*}

for a radial profile function $u : [0,\infty) \times [0,\infty) \to \R$.

By plugging this ansatz into \eqref{Harmonic map heat flow} the harmonic map heat flow transforms into the following $d-$dimensional semilinear heat equation for the radial profile $u$
\begin{align}\label{Equation for radial profile into the sphere}
        \left(\partial_t - \partial^2_r - \frac{d-1}{r} \partial_r \right) u(t,r) + \frac{d-1}{r^2}\sin(u(t,r)) \cos(u(t,r)) = 0.
\end{align}

Self-similar solutions to this equation are now invariant under the natural parabolic scaling from \eqref{parabolic scaling}, combined with the time translation invariance, so that a self-similar solution is of the form
\begin{align*}
    u(t,r) = f\left(\frac{r}{\sqrt{T-t}}\right), \quad T > 0
\end{align*}

for a self-similar profile function $f$ which satisfies the following second order ordinary differential equation
\begin{align*}
   0 = f''(\rho) + \left(\frac{d-1}{\rho}-\frac{\rho}{2}\right) f'(\rho) - \frac{d-1}{\rho^2} \sin(f(\rho)) \, \cos(f(\rho)).
\end{align*}

Fan proved in \cite{Fan99} that \eqref{Equation for radial profile into the sphere} admits infinitely many self-similar solutions in dimensions $d \in \{3,4,5,6\}$. For the study of singularity formation, however, it is important to not only show the existence of such blowup solutions, but also to understand whether their behavior remains under small perturbations of the initial data. In dimensions $d = 3$, Biernat and Donninger constructed a particular self-similar profile in \cite{BieDon18} whose asymptotic nonlinear stability could then be proven by them in \cite{BieDonSch17} together with Schörkhuber. Recently, the corresponding result was extended to dimensions $4 \leq d \leq 6$ in \cite{AngKisSch26}. These results show that if the initial data are chosen sufficiently close to the corresponding self-similar solution, then the resulting evolution still blows up in finite time and converges back to the same profile after passing to similarity variables. Thus these profiles are not just special examples of finite-time blowup, but rather describe a stable blowup mechanism for the harmonic map heat flow.

In this work we construct self-similar blowup solutions for target manifolds that are small perturbations of the round sphere and then prove that these blowup solutions are also asymptotically nonlinear stable against small corotational perturbations of the initial data.

To be able to precisely state our main results, we now first introduce the class of target manifolds which will be considered in this paper, namely \textit{perturbed spheres}.

\begin{definition}[\cite{DonSchWit25}, Definition 1.1]\label{Definition: Perturbed sphere}
Let $\alpha \in C^{\infty}(\R)$ be a non-trivial, real-valued, even $2\pi-$periodic function with $\alpha(0) = \alpha(\pi) = 0$. For $\abs{\varepsilon} \leq \varepsilon_0 := \left(2\norm{\alpha}_{\infty}\right)^{-1}$ we define the warped product manifold $(S^d_{\varepsilon}, h)$ by
\begin{align}\label{Perturbed sphere}
    S^d_{\varepsilon}  := (0,\pi) \times_{w_{\varepsilon}} \S^{d-1}
\end{align}

equipped with the warping function $w_{\varepsilon} : \R \to \R$,
\begin{align}\label{Warping function}
    w_{\varepsilon}(u) := \sin(u) (1 + \varepsilon \, \alpha(u)).
\end{align}

In coordinates $(u,\Omega) \in (0,\pi) \times \S^{d-1}$ the metric on $S^d_{\varepsilon}$ is then given by 
\begin{align*}
     h = du^2 + w_{\varepsilon}(u)^2 d\Omega^2
\end{align*}

with $d\Omega^2$ denoting the standard round metric on $\S^{d-1} \hookrightarrow \R^d$.
\end{definition}
   
The assumptions on $\alpha$ in Definition \ref{Definition: Perturbed sphere} ensure that $S^d_{\varepsilon}$ is a smooth, compact, $d-$dimensional Riemannian manifold. For more details about warped product manifolds we refer to \cite{Bes78}. In the unperturbed case $\varepsilon = 0$, the warping function becomes $w_0(u) = \sin(u)$ and the corresponding manifold is the round sphere $S^d$. 

We next introduce \textit{normal coordinates} on $S^d_{\varepsilon}$, as in \cite{ShaTah94}, by setting
\begin{align}\label{Normal coordinates}
	U^j := u \, \Omega^j, \quad \text{for} \quad j=1,\dots,d.
\end{align}

In these coordinates, $S^d_{\varepsilon}$ together with its north pole can be identified with the ball $B_{\pi}(0) \subset \R^d$. Thus the harmonic map heat flow can be written as the following initial value problem on $[0,\infty) \times \R^d$
\begin{align}\label{HMHF in normal coordinates}
\begin{cases}
  \partial_t U = \tau(U), \\
  U(0,\cdot) = U_0
\end{cases}
\end{align}

for initial data $U_0: \R^d \to \R^d$.

A corotational map $U: [0,\infty) \times \R^d \to S_{\varepsilon}^d$ in these coordinates is of the form
\begin{align}\label{Corotational map}
    U(t,x) = u(t,\abs{x}) \frac{x}{\abs{x}}
\end{align}

and the harmonic map heat flow \eqref{Harmonic map heat flow} transforms into the following $d-$dimensional semilinear heat equation for the radial profile $u$
\begin{align}\label{Equation for radial profile}
    \left(\partial_t - \partial^2_r - \frac{d-1}{r} \partial_r \right) u(t,r) + \frac{d-1}{r^2}w_{\varepsilon}(u(t,r)) \, w_{\varepsilon}'(u(t,r)) = 0.
\end{align} 

As mentioned above, for $3 \leq d \leq 6$ and $\varepsilon = 0$, there exists an asymptotically nonlinear stable finite-time blowup solution $\widetilde{f}_0$ to this equation. More precisely
\begin{align}\label{Ground state}
    u_0^T(t,r) = \widetilde{f}_0\left(\frac{r}{\sqrt{T-t}}\right), \quad T > 0
\end{align}

defines a self-similar solution of \eqref{Equation for radial profile into the sphere}.

The first main result of this paper proves the existence of a self-similar finite-time blowup solution $\widetilde{f}_{\varepsilon}$ to the harmonic map heat flow into $S^d_{\varepsilon}$ for sufficiently small $\abs{\varepsilon} > 0$. 

\begin{theorem}\label{Theorem: Blowup Solution}
Let $3 \leq d \leq 6$. Then there exists an $\varepsilon^* > 0$ such that for every $\abs{\varepsilon} \leq \varepsilon^*$ and every $T > 0$ there exists a self-similar solution to equation \eqref{Equation for radial profile} whose radial derivative blows up in $r=0$ at time $T$. More precisely, there exists a $u_{\varepsilon}^T \in C^{\infty}([0,T) \times [0,\infty)) \cap L^{\infty}([0,T) \times [0,\infty))$ of the form
\begin{align*}
    u_{\varepsilon}^T(t,r) = \widetilde{f}_{\varepsilon}\left(\frac{r}{\sqrt{T-t}}\right) \quad \text{with } ~ \lim_{t \to T^{-}} \abs{\partial_r u_{\varepsilon}^T(t,0)} = \infty.
\end{align*}

Additionally, the profile $\widetilde{f}_{\varepsilon}$ can be written as 
\begin{align*}
    \widetilde{f}_{\varepsilon}(\rho) = \widetilde{f}_0(\rho) + \rho \, \widetilde{\phi}_{\varepsilon}(\rho)
\end{align*}

with $\widetilde{f}_0$ being the self-similar solution for $\varepsilon = 0$ defined in Eq.~\eqref{Ground state} and a perturbation $\widetilde{\phi}_{\varepsilon}$ which depends Lipschitz continuously on the parameter $\varepsilon$ in the sense that 
\begin{align*}
        \norm{\widetilde{\phi}_{\varepsilon} - \widetilde{\phi}_{\kappa}}_{W^{2,\infty}([0,\infty))} \lesssim \abs{\varepsilon-\kappa}
\end{align*}

holds for all $\abs{\varepsilon},\abs{\kappa} \leq \varepsilon^*$.
Furthermore, $\widetilde{f}_{\varepsilon}$ is odd with $0 < \widetilde{f}_{\varepsilon}(\rho) < \pi$ for all $\rho > 0$ and $\lim\limits_{\rho\to\infty} \widetilde{f}_{\varepsilon}(\rho)$ exists. Additionally, for every $k \in \N$ there are constants $ C_{\varepsilon,k}  > 0$ such that 
\begin{align}\label{Decay of f_e}
    \abs{\widetilde{f}_{\varepsilon}^{(k)}(\rho)} \leq C_{\varepsilon,k} \, \langle \rho \rangle^{-2-k}
\end{align}

holds for all $\rho  \in [0,\infty)$.
\end{theorem}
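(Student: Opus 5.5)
We construct $\widetilde f_\varepsilon$ perturbatively around the known profile $\widetilde f_0$, following \cite{DonSchWit25}. Inserting $f = \widetilde f_0 + \rho\,\phi$ into the self-similar ODE
\[
0 = f'' + \Big(\tfrac{d-1}{\rho}-\tfrac{\rho}{2}\Big) f' - \tfrac{d-1}{\rho^2}\, w_\varepsilon(f)\,w_\varepsilon'(f)
\]
gives an equation of the form $\mathcal L\phi = N_\varepsilon(\phi)$. Here $\mathcal L$ is the linearization at $\widetilde f_0$, conjugated by the factor $\rho$. Conjugating by $\rho$ turns the radial problem in $d$ dimensions into a radial problem in $d+2$ dimensions, where the potential $\frac{d-1}{\rho^2}(\cos 2\widetilde f_0 - 1)$ is regular at the origin.

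The right-hand side $N_\varepsilon(\phi)$ collects two kinds of terms. The first is the forcing $\varepsilon\,\frac{d-1}{\rho^3}\big(w_\varepsilon w_\varepsilon' - \sin\cos\big)(\widetilde f_0)/\varepsilon$. Since $\alpha$ is even, $\alpha(u)=O(u^2)$ near $0$, so this forcing is smooth at $\rho = 0$. The second is the genuinely nonlinear part. Using evenness of $\alpha$ and $\alpha(0)=\alpha(\pi)=0$, together with smoothness of $w_\varepsilon w_\varepsilon'$, both pieces are smooth, $\varepsilon$-Lipschitz, and quadratic in $\phi$ up to the $O(\varepsilon)$ source.

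The key step is to invert $\mathcal L$ on a suitable space, for instance $\phi$ in a weighted $W^{2,\infty}$ space (or a radial Sobolev space in $d+2$ dimensions) with $\rho$-decay.

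The main obstacle is that $\widetilde f_0$ is not explicit. Moreover, the linearized operator in similarity variables has the symmetry mode coming from time translation, $\rho\widetilde f_0'(\rho)$. Because this mode solves a related inhomogeneous problem, the static operator $\mathcal L$ itself may or may not have a kernel. I would use the spectral information from \cite{BieDonSch17,AngKisSch26}: these works show that $\mathcal L$, viewed as the generator in similarity coordinates, has only the unstable eigenvalue $1$ (time-translation) and no eigenvalue $0$. Nondegeneracy of $0$ gives invertibility of $\mathcal L$ on the radial space, with bounded inverse into $W^{2,\infty}$. Concretely, I would build the inverse via a variation-of-constants Green's function. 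It uses a fundamental system consisting of a solution regular at $0$ and a solution with the good decay at $\infty$ (behaving like $\rho^{-1}$-type, i.e. $\sim$ const$\cdot\rho^{-3}$ after the $\rho$ factor, versus the bad $e^{\rho^2/4}$ branch). Nonexistence of a global solution regular at both ends is exactly the nondegeneracy.

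With the inverse in hand, a contraction argument on a small ball of radius $\sim|\varepsilon|$ yields $\widetilde\phi_\varepsilon$, with Lipschitz dependence on $\varepsilon$ from the Lipschitz dependence of $N_\varepsilon$.

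Afterwards I would establish the qualitative properties. Oddness follows from extending $\phi$ evenly, since the equation is invariant under $f\mapsto -f(-\rho)$. The bounds $0<\widetilde f_\varepsilon<\pi$ on compact sets follow from smallness, and since $\widetilde f_0>0$ is only small near $0$, the expansion $\widetilde f_0'(0)>0$ handles $\rho$ near $0$. Near infinity I would use the asymptotics of the ODE: the decaying solutions satisfy $f(\rho)=f_\infty + c\rho^{-2}+\dots$, which gives existence of the limit. I read \eqref{Decay of f_e} as a bound on the derivatives for $k\ge1$, and I would obtain it by iterating the ODE $f''=(\rho/2-\cdots)f'+\ldots$ for large $\rho$. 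Smoothness comes from elliptic/ODE regularity.

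Finally, $u^T_\varepsilon(t,r) = \widetilde f_\varepsilon(r/\sqrt{T-t})$ solves \eqref{Equation for radial profile}. Its radial derivative at $r=0$ is $\widetilde f_\varepsilon'(0)/\sqrt{T-t}$, which blows up because $\widetilde f_\varepsilon'(0) = \widetilde f_0'(0) + O(\varepsilon) \neq 0$.
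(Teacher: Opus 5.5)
Your construction of $\widetilde\phi_\varepsilon$ matches the paper in substance. Both pass to $\rho^{-1}f$ in $n=d+2$ dimensions and invert only the $\varepsilon=0$ linearization, which is possible because $\sigma(L_0^X)\cap\{\Re\lambda\ge-\widetilde\omega\}=\{1\}$ puts $0$ in the resolvent set, and then close a contraction. The paper does this in $X_s^k$ with the Schauder-type estimates rather than a Green's function. It also keeps the non-quadratic but $O(\varepsilon)$ linear term $V_\varepsilon(\psi_0)\phi$ on the right-hand side, and you should list it there too.

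The genuine gap is the claim $0<\widetilde f_\varepsilon(\rho)<\pi$ for \emph{all} $\rho>0$. Smallness of $\widetilde\phi_\varepsilon$ in $W^{2,\infty}$ only gives $|\rho\,\widetilde\phi_\varepsilon(\rho)|\lesssim R|\varepsilon|$ on $[0,R]$. So each compact set needs its own smallness of $\varepsilon$, and the tail $[R,\infty)$ is never covered uniformly. Your asymptotics at infinity give existence of $\lim_{\rho\to\infty}\widetilde f_\varepsilon(\rho)$ but say nothing about where that limit lies. To close your perturbative argument you need two facts that are missing from the proposal:
\begin{itemize}
\item a uniform weighted bound $\sup_\rho|\rho\,\widetilde\phi_\varepsilon(\rho)|\lesssim|\varepsilon|$, which needs the weight $\langle\rho\rangle$ built into your norm (the paper proves $|\partial^\alpha(\psi_\varepsilon-\psi_\kappa)(y)|\lesssim|\varepsilon-\kappa|\langle y\rangle^{-1-|\alpha|}$ by an absorption argument for large $\rho$);
\item $\lim_{\rho\to\infty}\widetilde f_0(\rho)\in(0,\pi)$ strictly, so that $\widetilde f_0$ stays a fixed distance from $0$ and $\pi$ on $[\rho_1,\infty)$.
\end{itemize}
The second fact is true but must be proved. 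For example, the backward integral formula for $f'$ below shows that a limit $0$ or $\pi$ forces $f$ to be constant near infinity, hence everywhere.

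The paper avoids both issues with a non-perturbative monotonicity argument. Set $\mathcal E(\rho)=\frac12\rho^2\widetilde f_\varepsilon'(\rho)^2-\frac{d-1}{2}w_\varepsilon(\widetilde f_\varepsilon(\rho))^2$. The profile equation gives $\mathcal E'(\rho)=\rho\bigl(\frac{\rho^2}{2}-(d-2)\bigr)\widetilde f_\varepsilon'(\rho)^2$. Since $\mathcal E(0)=0$ and $\mathcal E(\rho)\to-\frac{d-1}{2}w_\varepsilon(\lim\widetilde f_\varepsilon)^2\le 0$, one gets $\mathcal E\le 0$ on $[0,\infty)$. At a first exit point $\rho_0$ from $(0,\pi)$ one has $w_\varepsilon(\widetilde f_\varepsilon(\rho_0))=0$, which forces $\widetilde f_\varepsilon'(\rho_0)=0$. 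Uniqueness for the regular ODE at $\rho_0$ then gives a constant solution $0$ or $\pi$, contradicting $\widetilde f_\varepsilon'(0)\neq 0$. This argument needs only $\widetilde f_\varepsilon'=O(\rho^{-3})$ and $w_\varepsilon(0)=w_\varepsilon(\pi)=0$.

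Relatedly, for the derivative decay, integrating the ODE outward is unstable. The function $h=\widetilde f_\varepsilon'$ solves $h'+\bigl(\frac{d-1}{\rho}-\frac{\rho}{2}\bigr)h=\frac{d-1}{\rho^2}(w_\varepsilon w_\varepsilon')(\widetilde f_\varepsilon)$, whose homogeneous solution grows like $\rho^{1-d}e^{\rho^2/4}$. The paper instead integrates from infinity with $\mu(\rho)=\rho^{d-1}e^{-\rho^2/4}$, using $\mu h\to 0$. This gives $|h|\lesssim\rho^{-3}$, and the higher derivatives follow by induction.
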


Since these constructed blowup solutions differ only via an $\varepsilon$-small perturbation from the original blowup solution we are able to obtain rigorous stability results for these solutions under small corotational perturbations. We formulate the result in normal coordinates and set, without loss of generality, the blowup time to $T=1$.

\begin{theorem}\label{Theorem: Stability in normal coordinates}
Let $3 \leq d \leq 6$ and take $\varepsilon^* > 0$ as in Theorem \ref{Theorem: Blowup Solution}. We then define for every $\abs{\varepsilon} \leq \varepsilon^*$
\begin{align*}
    U^T_{\varepsilon}(t,x) :=  \widetilde{f}_{\varepsilon}\left(\frac{\abs{x}}{\sqrt{T-t}}\right)  \frac{x}{\abs{x}}
\end{align*}

and consider corotational initial data of the form
\begin{align*}
U_0 = U^1_{\varepsilon}(0,\cdot) + \nu_0, 
\end{align*}

where $\nu_0: \R^d \to \R^d$ is defined as $\nu_0(x) = x v_0(\abs{x})$ for a radial Schwartz function $v_0(\abs{\cdot})  \in \mc{S}(\R^d)$. Let $(s,k) \in \R \times \N$  satisfy 
\begin{align}\label{Condition}
    \frac{d}{2} < s \leq \frac{d}{2} + \frac{1}{2d+2}, \quad k = d+3.
\end{align} 

Then there exists a strictly positive $\overline{\varepsilon} \leq \varepsilon^*$ such that for every $\abs{\varepsilon} \leq \overline{\varepsilon}$ there are constants $\delta > 0$ and $M_0 > 1$ such that for every $\nu_0(x) = x v_0(\abs{\cdot})$ as above with
\begin{align*}
    \norm{\nu_0}_{\dot{H}^s\cap\dot{H}^k(\R^d,\R^d)}< \frac{\delta}{M_0},
\end{align*}

there exists a $T = T_{\varepsilon} \in [1-\delta , 1+\delta]$ and a unique corotational map $U \in C^{\infty}([0,T)\times\R^d,\R^d)$ that satisfies \eqref{HMHF in normal coordinates} for all $(t,x) \in [0,T) \times \R^d$. The gradient of $U$ blows up at the origin as $t \to T^{-}$ and we have the decomposition 
\begin{align*}
    U(t,x) = U^{T}_{\varepsilon}(t,x) + \nu \left (t, \frac{x}{\sqrt{T-t}} \right ),
\end{align*}

for a function $\nu :[0,T) \times \R^d \to \R^d$ which satisfies 
\begin{align}\label{Decay of nu}
    \norm{\nu(t, \cdot)}_{\dot{H}^r(\R^d,\R^d)} \to 0 
\end{align}

as $t \to T^{-}$ for all $r \in [s,k]$. In addition,
\begin{align*}
    U(t,(\sqrt{T-t})x) \to \widetilde{f}_{\varepsilon}(\abs{x}) \frac{x}{\abs{x}}
\end{align*}

pointwise and uniformly on compact subsets of $\R^d$ as $t \to T^{-}$. 
\end{theorem}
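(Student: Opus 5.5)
The plan is to follow the perturbative scheme of \cite{DonSchWit25} and \cite{BieDonSch17}, carried out in similarity variables. We set $\tau = -\log(T-t) + \log T$ and $\rho = r/\sqrt{T-t}$. Since $u$ is odd and regular, we write $u(t,r) = \widetilde f_\varepsilon(\rho) + \rho\,\psi(\tau,\rho)$ with $\psi$ radial. The factor $\rho$ removes the singular term at the origin, so $\psi$ solves a radial heat-type equation in $d+2$ dimensions,
\begin{align*}
\partial_\tau \psi = \mathcal{L}_\varepsilon \psi + N_\varepsilon(\psi), \qquad \mathcal{L}_\varepsilon = \Delta_{\R^{d+2}} - \tfrac{\xi}{2}\cdot\nabla - \tfrac12 + V_\varepsilon,
\end{align*}
where the potential $V_\varepsilon$ comes from linearizing $\frac{d-1}{\rho^2}w_\varepsilon w_\varepsilon'$ around $\widetilde f_\varepsilon$, and $N_\varepsilon$ is at least quadratic. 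We pose the problem in the space $\mc{H} = \dot H^s\cap \dot H^k$ for radial functions on $\R^{d+2}$; its norm is equivalent to the norm of $\nu_0$ in the statement. The conditions \eqref{Condition} are chosen so that $\mc{H}$ embeds in $L^\infty$, is an algebra in the relevant sense, and the free operator $\mathcal{L}_{\text{free}}$ generates a semigroup with a negative growth bound.

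\textbf{Linear theory.} For $\varepsilon=0$ we take the result of \cite{BieDonSch17,AngKisSch26} as given. It states that $\mathcal{L}_0$ generates a semigroup on $\mc{H}$ whose spectrum in $\{\Re\lambda\ge -\omega_0\}$ consists only of the simple eigenvalue $\lambda=1$, which comes from the time-translation symmetry. The semigroup decays exponentially on the complement of the corresponding Riesz projection. To pass to $\varepsilon\neq 0$, we observe that Theorem \ref{Theorem: Blowup Solution} together with the smoothness of $w_\varepsilon$ in $\varepsilon$ gives $\|V_\varepsilon - V_0\|_{W^{k,\infty}} \lesssim |\varepsilon|$. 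The difference $\mathcal{L}_\varepsilon-\mathcal{L}_0$ is then a bounded operator on $\mc{H}$ of norm $O(\varepsilon)$, and bounded perturbation theory gives the spectral picture for $\mathcal{L}_\varepsilon$. The resolvent is uniformly bounded on the region away from $1$ inside the half-plane; this uses the resolvent bounds for large $|\lambda|$ known from the unperturbed case. The isolated eigenvalue $1$ survives exactly, since $\partial_T$ of the blowup family $u^T_\varepsilon$ produces an explicit eigenfunction $g_\varepsilon = \rho\,\widetilde f_\varepsilon'$ (suitably divided by $\rho$). Its Riesz projection $P_\varepsilon$ stays rank one for small $\varepsilon$, and we obtain $\|S_\varepsilon(\tau)(1-P_\varepsilon)\|\lesssim e^{-\omega\tau}$ uniformly in $\varepsilon$.

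\textbf{Nonlinear theory.} We bound the nonlinearity as $\|N_\varepsilon(\psi)-N_\varepsilon(\varphi)\|_{\mc H}\lesssim(\|\psi\|+\|\varphi\|)\|\psi-\varphi\|$ for small data, using the embedding $\mc{H}\hookrightarrow L^\infty$ and Moser-type estimates for smooth functions of $\psi$. We then write the Duhamel formula and apply the standard Lyapunov–Perron correction: subtract $e^{\tau}P_\varepsilon(\text{data}+\int_0^\infty e^{-\sigma}N\,d\sigma)$ and run a contraction argument in the space of functions with $\sup_\tau e^{\omega\tau}\|\psi(\tau)\|\le\delta$. This yields, for every $T$ near $1$, a solution of the modified equation. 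The initial data depend on $T$ through the rescaling $\widetilde f_\varepsilon(\rho\sqrt T)-\widetilde f_\varepsilon(\rho)+\sqrt T\rho v_0(\sqrt T\rho)$. We then choose $T$ so that the correction term vanishes. This is a one-dimensional fixed-point (Brouwer) argument: the derivative in $T$ of the data is proportional to $g_\varepsilon$, so the correction is, to leading order, a nondegenerate function of $T-1$.

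\textbf{Conclusion and main obstacle.} Undoing the change of variables gives $U$ with $\|\nu(t)\|_{\dot H^r}\lesssim (T-t)^{\omega'}$ for $r \in [s,k]$, which yields \eqref{Decay of nu}. The embedding into $L^\infty$ on compacts gives the stated pointwise and uniform convergence. Smoothness follows from parabolic regularity, uniqueness from uniqueness of classical solutions, and gradient blowup from $\widetilde f_\varepsilon'(0)\neq0$, which is stable in $\varepsilon$. We expect the main obstacle to be the uniform-in-$\varepsilon$ spectral step: we must exclude unstable eigenvalues of $\mathcal{L}_\varepsilon$ in the closed right half-plane other than $1$. The plan is to get this from a uniform resolvent bound for $\mathcal{L}_0$ on the compact part of the region near $\Re\lambda\ge-\omega_0$, which is continuous and hence bounded there. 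On the unbounded part we would use large-$|\lambda|$ estimates obtained from the free operator plus the compactness of the potential. A Neumann series then shows that $\lambda-\mathcal{L}_\varepsilon$ is invertible there once $|\varepsilon|\le\overline\varepsilon$, and this is why $\overline\varepsilon$ may be smaller than $\varepsilon^*$.
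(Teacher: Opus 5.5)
Your architecture is the paper's: reduction to a radial problem on $\R^{d+2}$ with the norm equivalence of \cite{Glo22}, a Neumann-series perturbation of the spectral picture of $L_0^X$, rank of the Riesz projection preserved by continuity in $\varepsilon$, the Lyapunov--Perron correction, and Brouwer for $T$. The genuine gap is that the two multiplicative estimates everything rests on are justified in a way that would fail.

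\textbf{The nonlinear estimate.} The nonlinearity is not a smooth function of $\psi$ with bounded coefficients. It is $\frac{n-3}{|y|^3}\big(\eta_\varepsilon(|y|(\psi_\varepsilon+\psi))-\eta_\varepsilon(|y|\psi_\varepsilon)-\eta_\varepsilon'(|y|\psi_\varepsilon)|y|\psi\big)$, whose $j$-th derivative in $\psi$ carries the factor $|y|^{j-3}$. Moreover, the argument $|y|\psi$ is unbounded for general $\psi\in\dot H^s\cap\dot H^k$: for radial $\psi$ one only has $|\psi(y)|\lesssim |y|^{s-n/2}$, so $|y|\psi$ may grow like $|y|^{s-n/2+1}$. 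The $L^\infty$ embedding plus Moser estimates therefore does not control it.

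What is needed is the triple-integral Taylor formula, which writes $\widehat N_\varepsilon(u)-\widehat N_\varepsilon(v)$ as a product of three $X_s^k$ functions times $F(|y|\,\zeta)$. Here $F=\eta_\varepsilon'''$ is even and bounded with all derivatives, and $\zeta$ is a combination of $\psi_\varepsilon,u,v$. One then applies the Schauder-type estimate of \cite{DonSchWit25}, Proposition A.1 (Lemma~\ref{Lemma: Schauder}). That lemma is exactly where the upper bound $s\le \frac d2+\frac{1}{2d+2}$ and $k=d+3$ come from. The reasons you give for the exponent range (embedding into $L^\infty$, algebra property, negative free growth bound) hold for every $s\in(\frac d2,\frac d2+1)$ and do not explain it.

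\textbf{The potential estimate.} A bound $\norm{V_\varepsilon-V_0}_{W^{k,\infty}}\lesssim|\varepsilon|$ does not make $V_\varepsilon-V_0$ a bounded multiplier on $\dot H^s\cap\dot H^k$, which carries no $L^2$ control. For instance, multiplication by $\cos|y|$ is unbounded there; test with $R^{s-n/2}\chi(\cdot/R)$, $R\to\infty$. Also, Theorem~\ref{Theorem: Blowup Solution} as stated only gives Lipschitz dependence in $W^{2,\infty}$. The paper instead uses $\norm{\psi_\varepsilon-\psi_\kappa}_{s,k}\lesssim|\varepsilon-\kappa|$ from the fixed-point construction (Proposition~\ref{Proposition: Existence of perturbation}). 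It writes $V_\varepsilon(\psi_\varepsilon)$ as $\psi_\varepsilon^2$ times a weighted average of $\eta_\varepsilon'''(t|y|\psi_\varepsilon)$, $t\in[0,1]$, and applies Lemma~\ref{Lemma: Schauder} again.

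\textbf{The spectral step.} Here you make things harder than necessary and omit one ingredient. You already assume exponential decay of $S_0^X(\tau)(I-P_0)$. The Laplace-transform representation of the resolvent (\cite{EngNag00}, Theorem II.1.10) then gives $\norm{R_{L_0^X}(\lambda)(I-P_0)}\le M_\omega/(\Re\lambda+\omega)$ on the whole half-plane $\mathbb{H}_{-\omega}$. Together with the explicit rank-one part $(\lambda-1)^{-1}\langle h_0,\cdot\rangle_{s,k}\, g_0$, this yields a uniform bound on $\overline{\mathbb{H}_{-\omega_0}}\setminus B_{1/2}(1)$. No separate large-$|\lambda|$ analysis via the free operator and compactness of the potential is required.

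What you do need, and do not mention, is a Gearhart--Pr\"uss type theorem; the paper uses \cite{Ost23}, Theorem A.1. It turns the uniform bound on $R_{L_\varepsilon^X}(\lambda)(I-P_\varepsilon)$, extended into $B_{1/2}(1)$ by analyticity, into exponential decay of $S_\varepsilon^X(\tau)(I-P_\varepsilon)$. Spectral information alone does not determine the growth bound of a semigroup.
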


\begin{remark}
Although the self-similar profile satisfies $0 < \widetilde{f}_{\varepsilon}(\rho) < \pi$ for all $\rho > 0$, the stability result does not assert that the radial profile function coming from the perturbed solution remains in the same interval. If the radial profile crosses $\pi$ the corresponding solution crosses the south pole of $S_{\varepsilon}^d$, which cannot be covered by a single uniform coordinate chart. Nevertheless, the corresponding solution can be smoothly continued via switching from the polar representation $(u,\omega)$ for $0 \leq u \leq \pi$ to $(2\pi - u, -\omega)$ for $\pi \leq u \leq 2\pi$, i.e. continuing the solution on the opposite meridian. 
\end{remark}

\subsection{Related results}
In this section we will just give a brief overview of self-similar blowup for the energy-supercritical harmonic map heat flow, for a more general discussion we refer the reader to for example \cite{LinWan08}. 

As we have already mentioned, the first self-similar blowup constructions are due to Coron and Ghidaglia \cite{CorGhi89} and Fan \cite{Fan99} in dimensions $3 \leq d \leq 6$. It was later shown by Bizoń and Wasserman \cite{BizWas15} that these are precisely the dimensions in which self-similar shrinking solutions can exist. We remark that if we allow for $k$-equivariant maps into spheres of higher dimension than the domain, Gastel showed in \cite{Gas02} that self-similar solutions exist for all supercritical dimensions $d \geq 3$.

The first stability result was obtained by Biernat and Donninger in \cite{BieDon18} and together with Schörkhuber in \cite{BieDonSch17} they constructed a spectrally stable self-similar profile in $d = 3$ dimensions and proved its asymptotic nonlinear stability. This result was now recently proven by \cite{AngKisSch26} in dimensions 4 to 6 as well. 

In dimensions $d \geq 4$ Glogi\'c, Kistner and Schörkhuber proved the existence and stability of an explicit self-similar blowup profile into a compact, rotationally symmetric target manifold. This result shows that stable self-similar blowup persists also in geometric settings outside of the round sphere. In our work, instead of relying on an explicit profile for a specifically chosen target manifold, we prove that stable self-similar blowup persists under small geometrical perturbations of the sphere. Thus, our result does not only provide another example of stable blowup, but also shows the stability of the blowup mechanism itself with respect to the sphere as the target manifold.


\subsection{Outline}

We start with the $d$-dimensional semilinear heat equation \eqref{Equation for radial profile} for the radial profile $u$. Setting $\widetilde{v}(t,r) := r^{-1} u(t,r)$ the equation transforms into a $(d+2)-$dimensional radial semilinear heat equation with a smooth nonlinearity
\begin{align}\label{Cauchy problem 2}
    \left(\partial_t - \partial^2_r - \frac{d+1}{r} \partial_r \right)\widetilde{v}(t,r) - \frac{d-1}{r^3}\left(r\,\widetilde{v}(t,r)-w_{\varepsilon}(r\,\widetilde{v}(t,r))\,w_{\varepsilon}'(r\,\widetilde{v}(t,r))\right) = 0.
\end{align}

For $n := d+2$ and $v(t,x) := \widetilde{v}(t,\abs{x})$ we can reformulate \eqref{Cauchy problem 2} as a heat equation on $\R^n$
\begin{align}\label{Nonlinear heat equation in n dimensions}
 	(\partial_t  - \Delta_x) v(t,x) = \frac{n-3}{\abs{x}^3}\left(\abs{x}v(t,x) - w_{\varepsilon}(\abs{x}v(t,x))w_{\varepsilon}'(\abs{x}v(t,x))\right), \quad x \in \R^n.
\end{align}

We then pass to similarity coordinates 
 \begin{align*}
 	\tau = \log\left(\frac{T}{T-t}\right), \quad y = \frac{x}{\sqrt{T-t}}
 \end{align*}
for $T > 0$ and $(t,x) \in [0,T) \times \R^n$ and in these coordinates \eqref{Nonlinear heat equation in n dimensions} turns into an evolution equation of the form
\begin{align}\label{Outline: Evolution equation}
    \partial_{\tau}\psi(\tau) = L\psi(\tau) + N_{\varepsilon}(\psi(\tau)), 
\end{align}
where $L$ generates the free heat evolution in similarity coordinates and $N_{\varepsilon}$ is the nonlinear part with the parameter $\varepsilon$ appearing through the warping function $w_{\varepsilon}$. A detailed derivation of this equation is given in Section \ref{Section: Existence of self-similar solutions} below. We study these operators in intersection homogeneous Sobolev spaces of radial functions
\begin{align*}
    X_s^k= \dot{H}_r^s(\R^n) \cap \dot{H}_r^k(\R^n)
\end{align*}

for suitable exponents $\frac{n}{2} - 1 < s < \frac{n}{2} < k$, $k \in \N$.

\subsubsection{Existence of self-similar blowup profiles}

Since self-similar solutions become static in similarity coordinates we are looking for a $\psi_{\varepsilon}$ solving
\begin{align}\label{Outline: Static solution}
    L \psi_{\varepsilon} + N_{\varepsilon} (\psi_{\varepsilon}) = 0.
\end{align} 
Due to the fact that the geometry of the perturbed spheres is somehow close to the geometry of the perfectly round $d$-sphere it is natural to look for self-similar solutions which are also somehow close to the original self-similar solution for the sphere. 
We therefore make the ansatz $\psi_{\varepsilon} = \psi_0 + \phi_{\varepsilon}$ where $\psi_0$ is a static solution to \eqref{Outline: Evolution equation} for $\varepsilon = 0$ and obtain a perturbation equation for $\phi_{\varepsilon}$ which can be written as
\begin{align}\label{Outline: Perturbation equation}
    - L_0 \phi_{\varepsilon} = V_{\varepsilon}(\psi_0)\phi_{\varepsilon}+ \widetilde{N}_{\varepsilon}(\phi_{\varepsilon}) + \mc{R}_{\varepsilon}(\psi_0).
\end{align}

The key idea here is that we do not want to invert the entire linear part of this equation but only $L_0:= L + V_0(\psi_0)$ which is precisely the linearization around $\psi_0$ in the case where $\varepsilon$ is equal to 0. The warping function $w_{\varepsilon}$ from \eqref{Warping function} is chosen in such a way so that the remaining potential $V_{\varepsilon}(\psi_0)$ can be made arbitrarily small (for sufficiently small $\varepsilon$). The same holds true for the remainder term $\mc{R}_{\varepsilon}(\psi_0)$ and the (now quadratically small) nonlinearity $\widetilde{N}_{\varepsilon}$. A precise definition of all these operators can be found in Section \ref{Section: Existence of self-similar solutions}.

The linearized operator $L_0$ is known to be self-adjoint in an exponentially weighted $L^2$-space and that its spectrum consists only of isolated eigenvalues with finite multiplicity. The same result holds true if we restrict the operator onto $X_s^k$ in which space we can then consider the right-hand side of \eqref{Outline: Perturbation equation}, see \cite{AngKisSch26}. To prove the necessary estimates for these operators we use the parameter-dependent Schauder type estimates from \cite{DonSchWit25}, Proposition A.1, where we will assume 
\begin{align}\label{Outline: Condition on exponents}
\frac{n}{2}-1 < s \leq \frac{n}{2}-1 + \frac{1}{2n-2}, \quad k = n + 1.
\end{align}

With that we can show that the operators on the right-hand side of \eqref{Outline: Perturbation equation} define maps $X_s^k \to X_s^k$ which are Lipschitz-continuous with respect to the parameter $\varepsilon$, see Lemma \ref{Lemma: Operator estimates}. 

Due to the invertibility of $L_0$ in $X_s^k$ we rewrite \eqref{Outline: Perturbation equation} as a fixed-point equation and apply Banach's fixed-point theorem for sufficiently small $\varepsilon > 0$.

The smoothness and decay properties of the resulting solution $\psi_{\varepsilon}$ to Eq.~\eqref{Outline: Static solution} follow from an ODE analysis similar to \cite{BieDon18}. 
Finally, transforming back to the original variables completes the proof of Theorem \ref{Theorem: Blowup Solution}.

\subsubsection{Stability of the self-similar blowup solution}

In Section \ref{Section: Stability Analysis} we show the asymptotic nonlinear stability of the blowup solution from Theorem \ref{Theorem: Blowup Solution}. We start with the semilinear heat equation \eqref{Nonlinear heat equation in n dimensions} on $\R^n$ with perturbed initial data around the self-similar blowup solution that blows up at time $T=1$. That is we are considering
\begin{align*}
    \begin{cases}
    \partial_t v - \Delta_x v = \frac{n-3}{\abs{x}^3}\left(\abs{x}v - w_{\varepsilon}(\abs{x}v)w_{\varepsilon}'(\abs{x}v)\right) \\
    v(0,x) = v_{\varepsilon}^1(0,x) + \varphi_0(x). 
    \end{cases}
\end{align*}

We then again introduce similarity coordinates to obtain
\begin{align*}
    \begin{cases}
        \partial_{\tau}\psi(\tau) = L \psi(\tau)+ N_{\varepsilon}(\psi(\tau)), \\
        \psi(0) = \psi_{\varepsilon}^T + \varphi_0^T
     \end{cases}
\end{align*}

and note that the only trace of the time parameter $T$ now lies in the initial condition. If we then make the (now time-dependent) ansatz $\psi_{\varepsilon}(\tau) = \psi_{\varepsilon} + \phi_{\varepsilon}(\tau)$, linearize around $\psi_{\varepsilon}$, we then end up with the central evolution equation of this paper
\begin{align}\label{Outline: Central evolution equation}
    \begin{cases}
        \partial_{\tau}\phi_{\varepsilon}(\tau) = L_{\varepsilon} \phi_{\varepsilon}(\tau) +  \widehat{N}_{\varepsilon}(\phi_{\varepsilon}(\tau)),\\
        \phi_{\varepsilon}(0) = U_{\varepsilon,T}(\varphi_0). 
    \end{cases}
\end{align}

Here $L_{\varepsilon}$ denotes the linearization around the self-similar profile $\psi_{\varepsilon}$. More precisely, it is given by the heat operator in similarity coordinates together with a potential term depending on $\psi_{\varepsilon}$. The remaining nonlinearity $\widehat{N}_{\varepsilon}$ is quadratically in its argument and $U_{\varepsilon,T}$ is the initial data operator and contains the dependence on the blowup time $T$. For the precise definition of the above defined operators we refer to Section \ref{Section: Stability Analysis}.

The stability problem is therefore reduced to the analysis of the Cauchy problem \eqref{Outline: Central evolution equation}. If one could construct global solutions which decay exponentially as $\tau$ goes to $\infty$ for all sufficiently small initial data, then the asymptotic stability of the profile $\psi_{\varepsilon}$ would follow.

However, the linearized operator $L_{\varepsilon}$ has a simple unstable eigenvalue at $\lambda = 1$, see Lemma \ref{Lemma: Eigenfunction}. Thus one cannot expect exponential decay for arbitrary small initial data without first removing this unstable direction. 

Therefore, the main task is to show that this is the only unstable eigenvalue of the linearized operator. For $\varepsilon = 0$ the corresponding spectral properties are known: apart from the simple eigenvalue $\lambda = 1$, the spectrum of the linearized operator $L_0$ is contained in a left half plane, see Proposition \ref{Proposition: Properties of L_0}. We prove that this spectral structure remains the same for $L_{\varepsilon}$ for $\varepsilon$ sufficiently small. The argument will follow the perturbative strategy used in \cite{DonSchWit25}, Section 4. More precisely, we first use a Neumann-series argument to show that the spectrum of $L_{\varepsilon}$ is contained (uniformly for sufficiently small $\varepsilon$) in a left half-plane together with a fixed compact region around the unstable eigenvalue $\lambda = 1$.

For the remaining compact region we use the fact that the Riesz-projections also depend (Lipschitz-)continuous on the parameter $\varepsilon$ so that there can not exist any other spectral points in that region due to dimensional reasons, see for example \cite{Kat95}, p.34, Lemma 4.10.

The artificial instability (generated by the time translation invariance with respect to the blowup time $T = 1$) can be dealt with the standard approach via adding a suitable correction term to the initial data operator.

Combining the above arguments, we can show the following stability result:

\begin{theorem}\label{Theorem: Stability of blowup solution}
Let $5 \leq n \leq 8$ and take $\varepsilon^* > 0$ as in Theorem \ref{Theorem: Blowup Solution}. For $\abs{\varepsilon} \leq \varepsilon^*$ we define
\begin{align*}
    v_{\varepsilon}^T(t,x) := \frac{1}{\sqrt{T-t}} \psi_{\varepsilon}\left(\frac{x}{\sqrt{T-t}}\right)
\end{align*}
and take $(s,k) \in \R \times \N$ satisfying
\begin{align}\label{Condition on exponents in n-dimensions}
    \frac{n}{2}-1<s \leq \frac{n}{2}-1+\frac{1}{2n-2}, \quad k = n + 1.
\end{align}

Then there exist $\omega>0$ and $0 < \overline{\varepsilon} \leq \varepsilon^*$ such that for every $\abs{\varepsilon} \leq \overline{\varepsilon}$ there are $\delta >0$ and  $M > 1$ such that the following holds: For every real-valued $\varphi_0 \in \mc{S}_r(\R^n)$ satisfying
\begin{align*}
    \norm{\varphi_0}_{\dot{H}^s\cap\dot{H}^k(\R^n)}< \frac{\delta}{M},
\end{align*}

there exists a $T = T_{\varepsilon} \in [1-\delta , 1+\delta]$ and a unique radial solution $v \in C^{\infty}([0,T)\times\R^n)$ to Eq.~\eqref{Nonlinear heat equation in n dimensions} with	
\begin{align*}
    v(0,\cdot) =v_{\varepsilon}^1(0,\cdot) + \varphi_0.
\end{align*}

Moreover, $v$ blows up at $(T,0)$ and can be decomposed as 
\begin{align*}
    v(t,x) = v_{\varepsilon}^{T}(t,x) + \frac{1}{\sqrt{T-t}} \varphi \left (\log\left(\frac{T}{T-t}\right), \frac{x}{\sqrt{T-t}} \right)
\end{align*}

for all $(t,x) \in [0,T) \times \R^n$, where $\varphi \in C^{\infty}([0,\infty)\times\R^n)$ is radially symmetric and satisfies
\begin{align*}
\norm{\varphi(-\log(T-t) + \log T,\cdot)}_{\dot{H}^r(\R^n)} \lesssim \delta (T-t)^{\omega}
\end{align*}
for all $r \in [s,k]$.
\end{theorem}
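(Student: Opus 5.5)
The plan is the standard Donninger--Schörkhuber scheme in similarity variables, carried out in $X_s^k=\dot H^s_r\cap\dot H^k_r(\R^n)$ with the exponents \eqref{Condition on exponents in n-dimensions}. Writing $\psi(\tau)=\psi_\varepsilon+\phi(\tau)$ gives the Cauchy problem \eqref{Outline: Central evolution equation}. Here $L_\varepsilon=L+V_\varepsilon(\psi_\varepsilon)$, and the potential $V_\varepsilon$ comes from the derivative of the nonlinearity $y\mapsto \frac{n-3}{|y|^3}(|y|\psi-w_\varepsilon w_\varepsilon'(|y|\psi))$ at $\psi_\varepsilon$. The proof has four steps:
\begin{enumerate}
\item linear theory for $L_\varepsilon$, uniformly in small $\varepsilon$;
\item Lipschitz estimates for the quadratic remainder $\widehat N_\varepsilon$;
\item a Lyapunov--Perron fixed point with a correction along the unstable direction;
\item choosing $T$ to remove that correction, and transforming back.
\end{enumerate}

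\textbf{Linear theory.} By Theorem \ref{Theorem: Blowup Solution}, $\psi_\varepsilon-\psi_0=\phi_\varepsilon$ is $O(\varepsilon)$ in $W^{2,\infty}$-type norms. The parameter-dependent Schauder estimates (Lemma \ref{Lemma: Operator estimates} and Proposition A.1 of \cite{DonSchWit25}) then give
\[
\|L_\varepsilon-L_0\|_{X_s^k\to X_s^k}\lesssim|\varepsilon|,
\]
so $L_\varepsilon$ is a bounded perturbation of $L_0$ and generates a semigroup $S_\varepsilon(\tau)$.

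Proposition \ref{Proposition: Properties of L_0} supplies the spectral picture for $\varepsilon=0$: spectrum in $\{\Re\lambda\le-\omega_0\}\cup\{1\}$, with $1$ simple, and a growth bound $\|S_0(\tau)(1-P_0)\|\lesssim e^{-\omega_0\tau}$.

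Next I confine the spectrum of $L_\varepsilon$. For $\lambda$ with $\Re\lambda\ge-\omega_0/2$ and $|\lambda-1|\ge r_0$, I write
\[
\lambda-L_\varepsilon=\bigl(1-(L_\varepsilon-L_0)R_{L_0}(\lambda)\bigr)(\lambda-L_0).
\]
If $\|R_{L_0}(\lambda)\|$ is uniformly bounded on this region, a Neumann series shows it lies in the resolvent set for $|\varepsilon|\le\overline\varepsilon$. That uniform bound holds on compact parts directly. For large $|\lambda|$ it needs the resolvent decay of $L_0$, obtained from the semigroup bound via Laplace transform, or from the dissipative estimates of \cite{AngKisSch26}.

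Inside the disc $|\lambda-1|<r_0$, the Riesz projection $P_\varepsilon$ is Lipschitz in $\varepsilon$. Hence $\|P_\varepsilon-P_0\|<1$, so $\operatorname{rank}P_\varepsilon=\operatorname{rank}P_0=1$ by \cite{Kat95}. Lemma \ref{Lemma: Eigenfunction} gives the explicit eigenfunction at $\lambda=1$, coming from time translation, namely $\mathbf g_\varepsilon=\psi_\varepsilon+y\cdot\nabla\psi_\varepsilon$ up to normalization. So the disc contains only the simple eigenvalue $1$.

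Finally, the growth bound $\|S_\varepsilon(\tau)(1-P_\varepsilon)\|\lesssim e^{-\omega\tau}$ is uniform in $\varepsilon$. I expect this to be the main obstacle, because the spectral mapping theorem is not available for free. I would first try to obtain it by treating $L_\varepsilon-L_0$ as a small bounded perturbation of the decomposed $L_0$. Concretely: a Duhamel/Gronwall argument on $\ran(1-P_\varepsilon)$, using $\|S_0(1-P_0)\|\lesssim e^{-\omega_0\tau}$ and $\|P_\varepsilon-P_0\|\lesssim|\varepsilon|$, which yields decay rate $\omega_0-C|\varepsilon|$. Alternatively, Gearhart--Prüss in the Hilbert space $X_s^k$ with the uniform resolvent bound from the previous step.

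\textbf{Nonlinearity.} The remainder $\widehat N_\varepsilon(\phi)$ is built from the smooth function $F_\varepsilon(z)=z-w_\varepsilon w_\varepsilon'(z)$, which is odd and $O(z^3)$. Its expression is
\[
\widehat N_\varepsilon(\phi)=\frac{n-3}{|y|^3}\Bigl[F_\varepsilon(|y|(\psi_\varepsilon+\phi))-F_\varepsilon(|y|\psi_\varepsilon)-F_\varepsilon'(|y|\psi_\varepsilon)|y|\phi\Bigr].
\]
Using $s>n/2-1$, $k=n+1$ and the Schauder-type estimates again, I show
\[
\|\widehat N_\varepsilon(\phi)-\widehat N_\varepsilon(\varphi)\|_{X_s^k}\lesssim(\|\phi\|+\|\varphi\|)\|\phi-\varphi\|
\]
on small balls, uniformly in $\varepsilon$.

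\textbf{Fixed point.} I solve the modified Duhamel equation
\[
\phi(\tau)=S_\varepsilon(\tau)\bigl(u-C_\varepsilon(\phi,u)\bigr)+\int_0^\tau S_\varepsilon(\tau-\sigma)\widehat N_\varepsilon(\phi(\sigma))\,d\sigma,
\]
where the correction subtracts the unstable component,
\[
C_\varepsilon(\phi,u)=P_\varepsilon\Bigl(u+\int_0^\infty e^{-\sigma}\widehat N_\varepsilon(\phi(\sigma))\,d\sigma\Bigr).
\]
I work in the space $\{\sup_\tau e^{\omega\tau}\|\phi(\tau)\|\le\delta\}$ and apply Banach's fixed-point theorem for $\|u\|\le\delta/M$.

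\textbf{Choice of $T$ and transformation back.} The initial data $u=U_{\varepsilon,T}(\varphi_0)$ depend continuously on $T$, and
\[
\partial_T U_{\varepsilon,T}\big|_{T=1,\varphi_0=0}\propto\mathbf g_\varepsilon .
\]
The scalar equation $C_\varepsilon(\phi_T,U_{\varepsilon,T}(\varphi_0))=0$ therefore has a solution $T\in[1-\delta,1+\delta]$ by Brouwer's fixed-point theorem (intermediate value theorem) in one dimension. For that $T$ the correction vanishes, so $\phi$ is a true mild solution decaying like $e^{-\omega\tau}$, and $e^{-\omega\tau}=(T-t)^\omega/T^\omega$.

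Since Schwartz data give classical solutions, parabolic regularity upgrades the mild solution to $C^\infty$. Uniqueness follows from local well-posedness of the smooth semilinear heat equation. Undoing the similarity variables gives the decomposition, the decay estimate for all $r\in[s,k]$ by interpolation, and blowup at $(T,0)$: the rescaled profile $\psi_\varepsilon(0)\ne0$ dominates, since $\psi_\varepsilon(0)=\widetilde f_\varepsilon'(0)$ and $\widetilde f_\varepsilon'(0)\neq0$ for small $\varepsilon$ because $\widetilde f_0'(0)\neq 0$.
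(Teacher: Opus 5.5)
Your proposal matches the paper's proof almost step for step. The steps it shares are the bounded perturbation of $L_0^X$, the Neumann series away from $\lambda=1$, Kato's rank argument for the Riesz projection, the Lyapunov--Perron correction along $g_\varepsilon$, and the one-dimensional fixed point for $T$. The one real difference is how you get decay of $S_\varepsilon^X(\tau)(I-P_\varepsilon)$ at a rate uniform in $\varepsilon$.

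The paper proves a uniform bound $\|R_{L_\varepsilon^X}(\lambda)(I-P_\varepsilon)\|\le M_{\omega_0}$ on the closed half-plane $\Re\lambda\ge-\omega_0$. It then invokes a Gearhart--Pr\"uss-type result, Theorem A.1 of \cite{Ost23}. This is your second option.

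Your first option also works and is more elementary. Set $w(\tau)=S_\varepsilon^X(\tau)(I-P_\varepsilon)u$.
\begin{itemize}
\item Since $P_\varepsilon w(\tau)=0$, you get $\|P_0w(\tau)\|_{s,k}=\|(P_0-P_\varepsilon)w(\tau)\|_{s,k}\lesssim|\varepsilon|\,\|w(\tau)\|_{s,k}$.
\item The component $(I-P_0)w$ satisfies a Duhamel formula driven by $S_0^X(\cdot)(I-P_0)$ and the bounded perturbation $L_\varepsilon^X-L_0^X$, whose norm is $O(|\varepsilon|)$.
\item Gronwall then gives decay at rate $\omega_0-C|\varepsilon|$.
\end{itemize}
The essential point is to control the $P_0$-component instantaneously through $P_\varepsilon w=0$. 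You must never propagate it with $S_0^X$, which grows like $e^\tau$ on $\ran P_0$.

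What your route buys:
\begin{itemize}
\item It needs resolvent bounds only on the circle around $1$, to define $P_\varepsilon$ and to get $\|P_\varepsilon-P_0\|\lesssim|\varepsilon|$.
\item It yields the half-plane spectral inclusion as a consequence rather than an input.
\item It uses no Hilbert-space structure.
\item The rate $\omega_0-C|\varepsilon|$ still exceeds $\omega=\widetilde\omega/2$ once $\varepsilon$ is small.
\end{itemize}
Since $\delta$ and $M$ may depend on $\varepsilon$, neither route needs the paper's $\varepsilon$-Lipschitz bound \eqref{Lipschitz property of semigroup on stable subspace} for the theorem as stated.

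There are three small corrections.
\begin{itemize}
\item To bound $\|L_\varepsilon^X-L_0^X\|\lesssim|\varepsilon|$ via Lemma \ref{Lemma: Schauder}, you need $\|\psi_\varepsilon-\psi_0\|_{s,k}\lesssim|\varepsilon|$, which comes from the fixed-point construction (Proposition \ref{Proposition: properties of self-similar solution}). The $W^{2,\infty}$ bound of Theorem \ref{Theorem: Blowup Solution} cannot control the $\dot H^{n+1}$ part.
\item Since $U_\varepsilon(\varphi_0,T)\approx(T-1)g_\varepsilon$, you cannot let $T$ range over all of $[1-\delta,1+\delta]$ while demanding $\|U_\varepsilon(\varphi_0,T)\|_{s,k}\le\delta/M$. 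Use the paper's two scales, $|T-1|\le\delta/M$ and $\|\varphi_0\|_{s,k}\le\delta/M^2$.
\item State explicitly that the fixed point depends continuously (indeed Lipschitz) on the data. This is what makes the scalar function of $T$ continuous for the intermediate value argument.
\end{itemize}
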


To then finally obtain Theorem \ref{Theorem: Stability in normal coordinates} we use the equivalence of norms of corotational maps and their radial profiles, see Proposition A.5 \cite{Glo22} .

\subsection{Notations}\label{Section: Notations} 

For $n \in \N$ we denote by $C^\infty_r(\R^n):=\{f\in C^\infty(\R^n): f\text{ is radial}\}$ the set of smooth radial functions and by $C^\infty_{c,r}(\R^n)$ the ones with additional compact support. By
\begin{align*}
    C_e^{\infty}[0,\infty) := \{f \in C^{\infty}([0,\infty)) : f^{(2j+1)}(0) = 0 \text{ for }j \in \mathbb N_0\},
\end{align*}

we define the set of smooth and even functions and note that there is a one-to-one correspondence between $C_r^{\infty}(\R^n)$ and $C_e^{\infty}[0,\infty)$. For this we will use the following convention: Is $\psi \in C_r^{\infty}(\R^n)$ a radial function we denote its associated radial profile via $\widetilde{\psi} \in C_e^{\infty}[0,\infty)$, i.e. $\psi(x) = \widetilde{\psi}(|x|)$.
Furthermore
\begin{align*}
    \mc{S}(\R^n) := \{f \in C^{\infty}(\R^n) : \forall \alpha, \beta \in \N_0^n : \sup\limits_{x \in \R^n} \abs{x^{\alpha} D^{\beta}f(x)} < \infty \}
\end{align*}
denotes the set of Schwartz functions and $\mc{S}_r(\R^n)$ denotes the subspace of radially symmetric Schwartz functions. 

Next we define the Fourier transform $\mc{F}u$ of $u \in C_c^{\infty}(\mathbb{R}^n)$ via
\begin{align*}
    \mc{F}u(\xi)=\frac{1}{(2\pi)^{\frac{n}{2}}} \int_{\mathbb{R}^n} u(x) e^{-i\xi\cdot x} \, dx.
\end{align*}

For $u,v \in C_c^{\infty}(\mathbb{R}^n)$ and $s \geq 0$ we define the inner product
\begin{align*}
    \langle u,v \rangle_{\dot{H}^s(\mathbb{R}^n)} := \langle \abs{\cdot}^s\mc{F}u, \abs{\cdot}^s\mc{F}v \rangle_{L^2(\mathbb{R}^n)}, 
\end{align*}

and the induced norm $\norm{u}_{\dot{H}^s(\mathbb{R}^n)}^2 := \langle u,u \rangle_{\dot{H}^s(\mathbb{R}^n)}$. If $k \in \N_0$ is a nonnegative integer
\begin{align*}
    \norm{u}_{\dot{H}^k(\mathbb{R}^n)} \simeq \sum_{\abs{\beta}=k} \norm{\partial^{\beta}u}_{L^2(\mathbb{R}^n)}
\end{align*}

for all $u\in C_c^{\infty}(\mathbb{R}^n)$. Finally, for $s \geq 0$ the homogeneous radial Sobolev space $\dot{H}_r^s(\R^n)$ denotes the space which is obtained by completion of radial test functions $C_{c,r}^{\infty}(\mathbb{R}^n)$ with respect to the above defined norm.

\section{Existence of self-similar solutions into \texorpdfstring{$S_{\varepsilon}^d$}{TEXT}}\label{Section: Existence of self-similar solutions}

From here on out we always assume $3 \leq d \leq 6$ and that $\alpha \in C^{\infty}(\R)$ satisfies the assumptions from Definition \ref{Definition: Perturbed sphere}. We then consider for $\varepsilon \in \R$ with $\abs{\varepsilon} \leq \varepsilon_0$ the family $S_{\varepsilon}^d$ of warped product manifolds with corresponding warping function $w_{\varepsilon}$ defined as in \eqref{Perturbed sphere} and \eqref{Warping function}, respectively. Our aim is now to construct for sufficiently small $\varepsilon$ a self-similar solution to the following $n := d+2-$dimensional semilinear heat equation
\begin{align}\label{Semilinear heat equation in n dimensions}
    \partial_t v - \Delta_x v = \frac{n-3}{\abs{x}^3} \left(\abs{x}v - w_{\varepsilon}(\abs{x}v) w_{\varepsilon}'(\abs{x}v)\right).
\end{align}

For this we first of all introduce similarity coordinates
\begin{align*}
    \tau = \log\left(\frac{T}{T-t}\right) \quad \text{and} \quad y = \frac{x}{\sqrt{T-t}}
\end{align*}

so that $t$ and $x$ can be written as
\begin{align*}
    t = T - T e^{-\tau} \quad \text{and} \quad x = \sqrt{T} e^{-\frac{\tau}{2}} y
\end{align*}

and observe that the differential operators transform in the following way
\begin{align*}
    \partial_t = \frac{e^{\tau}}{T}\left(\partial_{\tau} + \frac{1}{2} y \cdot \nabla_y\right) \quad \text{and} \quad \Delta_x = \frac{e^{\tau}}{T} \Delta_y.
\end{align*}

We now set
\begin{align*}
    \psi(\tau,y) := \sqrt{T} e^{-\frac{\tau}{2}} v(T-Te^{-\tau}, \sqrt{T}e^{-\frac{\tau}{2}}y) \left[= \sqrt{T-t} \, v(t,x)\right]
\end{align*}

so that \eqref{Semilinear heat equation in n dimensions} becomes
\begin{align}\label{Equation in similarity coordinates}
    \left(\partial_{\tau} - \Delta_y + \Lambda\right) \psi(\tau,y) = \frac{n-3}{\abs{y}^3} \left(\abs{y}\psi(\tau,y) - w_{\varepsilon}(\abs{y}\psi(\tau,y)) \,  w_{\varepsilon}'(\abs{y}\psi(\tau,y))\right),
\end{align}

where $\Lambda$ is defined as $\Lambda f(y) = \frac{1}{2}\left(y \cdot \nabla_y f(y) + f(y)\right)$. We now use the notation $\psi(\tau)(y) := \psi(\tau,y)$ and rewrite \eqref{Equation in similarity coordinates} as an evolution equation
\begin{align}\label{Evolution equation}
    \partial_{\tau} \psi(\tau) = \widetilde{L} \, \psi(\tau) + N_{\varepsilon}(\psi(\tau)),
\end{align}

where the linear operator is given by $\widetilde{L} := \Delta - \Lambda$ and the nonlinearity is defined as
\begin{align}\label{Nonlinearity}
    N_{\varepsilon}(\psi(\tau))(y) = \frac{n-3}{\abs{y}^3} \eta_{\varepsilon}(\abs{y}\psi(\tau,y)) \quad \text{for } ~ \eta_{\varepsilon}(z) := z - w_{\varepsilon}(z) \, w_{\varepsilon}'(z).
\end{align}

Since self-similar solutions become static in similarity coordinates we are looking for a $\psi_{\varepsilon}$ solving
\begin{align}\label{Static equation}
    0 = \widetilde{L} \, \psi_{\varepsilon} + N_{\varepsilon}(\psi_{\varepsilon}).
\end{align}

The idea is now to linearize about the self-similar blowup solution for the sphere by making the ansatz $\psi_{\varepsilon} = \psi_0 + \phi_{\varepsilon}$,
where $\psi_0$ is given by $\psi_0(y) = \abs{y}^{-1} \widetilde{f}_0(\abs{y})$ and $\widetilde{f}_0$ being the self-similar profile from \eqref{Ground state}.

Since we have $0 = \widetilde{L} \, \psi_0 + N_0(\psi_0)$ plugging this ansatz into \eqref{Static equation} leads to the following perturbation equation for $\phi_{\varepsilon}$
\begin{align}\label{Perturbation equation}
    0 = \widetilde{L}_0 \, \phi_{\varepsilon} + V_{\varepsilon}(\psi_0) \phi_{\varepsilon} + \widetilde{N}_{\varepsilon}(\phi_{\varepsilon}) + \mc{R}_{\varepsilon}(\psi_0),
\end{align}

where we have defined $\widetilde{L}_0 := \widetilde{L} + V_0(\psi_0)$ for
\begin{align*}
    V_0(\psi_0) = \frac{n-3}{\abs{y}^2} \eta_0'(\abs{y}\psi_0)  
\end{align*}

and the other operators are given by
\begin{align*}
    V_{\varepsilon}(\psi_0) & = \frac{n-3}{\abs{y}^2} \left(\eta_{\varepsilon}'(\abs{y}\psi_0) - \eta_0'(\abs{y}\psi_0)\right),\\
    \mc{R}_{\varepsilon}(\psi_0) & = \frac{n-3}{\abs{y}^3} \left(\eta_{\varepsilon}(\abs{y}\psi_0) - \eta_0(\abs{y}\psi_0)\right),\\ \widetilde{N}_{\varepsilon}(u) & = \frac{n-3}{\abs{y}^3} \left(\eta_{\varepsilon}(\abs{y}(\psi_0+u)) - \eta_{\varepsilon}(\abs{y}\psi_0) - \eta_{\varepsilon}'(\abs{y}\psi_0) \abs{y}u\right). 
\end{align*}

In the following we introduce the function spaces in which the blowup solution is constructed and in which the stability analysis is carried out later on.

\subsection{Function spaces} For $0 \leq s < k$ we define on $C_{c,r}^{\infty}(\R^n)$ an inner product via
\begin{align*}
    \langle u, v \rangle_{s,k} := \langle u, v \rangle_{X_s^k(\R^n)} := \langle u, v \rangle_{\dot{H}^s(\R^n)} + \langle u, v \rangle_{\dot{H}^k(\R^n)} 
\end{align*}

and $X_s^k := X_s^k(\R^n)$ is then defined as the completion of radial test functions $C_{c,r}^{\infty}(\R^n)$ with respect to the induced norm $\norm{\cdot}_{s,k}$. Additionally, we define 
\begin{align*}
    \mc{H} := \left\{f : \R^n \to \C ~ \text{measurable and radial with} ~ \int_{\R^n} \abs{f(x)}^2 e^{-\frac{\abs{x}^2}{4}} \, dx < \infty\right\}
\end{align*}

as the exponentially weighted $L^2-$space of radial functions.\medskip

To later on obtain Lipschitz estimates (with respect to $\varepsilon$) of the operators occurring in \eqref{Perturbation equation} we need parameter depending Schauder-type estimates. These were proven in \cite{DonSchWit25}, Proposition A.1, and we state them here again for convenience.

\begin{lemma}[Proposition A.1 in \cite{DonSchWit25}]\label{Lemma: Schauder}
Let $n \geq 5$ and $\abs{\varepsilon} \leq 1$ let $F_{\varepsilon} \in C^\infty(\R)$ be a family of even functions such that for all $\ell \in \mathbb{N}_0$ there exists a constant $C_{\ell} \geq 0$ such that
\begin{align} 
	\abs{F_{\varepsilon}^{(\ell)}(x) - F_{\kappa}^{(\ell)}(y)} \leq C_{\ell} \, \left( \abs{\varepsilon - \kappa} + \abs{x-y} \right)
\end{align}

holds for all $x,y \in \R$ and all $\abs{\varepsilon}, \abs{\kappa} \leq 1$. Then, for every $s\in\R$ and $k\in\N$ that satisfy
\begin{align}
    \frac{n}{2}-1 < s \leq \frac{n}{2}-1 + \frac{1}{2n-2}, \quad k>n
\end{align}

we have
\begin{align}
	\norm{u_1u_2u_3 \left(F_{\varepsilon}(\abs{\cdot}v)-F_{\kappa}(\abs{\cdot}v)\right)}_{\dot{H}^{s-1} \cap \dot{H}^{k}(\mathbb{R}^n)} \lesssim \abs{\varepsilon-\kappa} \prod_{i=1}^{3} \norm{u_i}_{\dot{H}^{s} \cap \dot{H}^{k}(\mathbb{R}^n)} \sum_{j=0}^{k}\norm{v}_{\dot{H}^{s} \cap \dot{H}^{k}(\mathbb{R}^n)}^{2j}
\end{align}

as well as
\begin{align}
\begin{split}
	&\norm{u_1u_2u_3 \left(F_{\varepsilon}(\abs{\cdot}v_1)-F_{\varepsilon}(\abs{\cdot}v_2)\right)}_{\dot{H}^{s-1} \cap \dot{H}^{k}(\mathbb{R}^n)}\\ &\lesssim \prod_{i=1}^{3} \norm{u_i}_{\dot{H}^{s} \cap \dot{H}^{k}(\mathbb{R}^n)} P(\norm{v_1}_{\dot{H}^{s} \cap \dot{H}^{k}(\mathbb{R}^n)},\norm{v_2}_{\dot{H}^{s} \cap \dot{H}^{k}(\mathbb{R}^n)})\norm{v_1-v_2}_{\dot{H}^{s} \cap \dot{H}^{k}(\mathbb{R}^n)}
\end{split}
\end{align}

for all $\abs{\varepsilon}, \abs{\kappa} \leq 1$ and all $u_1, u_2, u_3, v, v_1, v_2 \in \dot{H}_r^s(\R^n) \cap \dot{H}_r^k(\mathbb{R}^n)$ where $v, v_1$ and $v_2$ are real-valued and $P$ is a polynomial of degree $\leq 2k+1$.
\end{lemma}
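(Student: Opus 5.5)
This is a parameter-dependent Schauder-type estimate, so the plan is to reduce it to the known parameter-free Moser/Schauder estimates for radial functions in $\dot H^s\cap\dot H^k(\R^n)$ with $s$ slightly above $\frac n2-1$. The key step is to express the $\varepsilon$-difference as an integral of a derivative. Set $G_{\varepsilon,\kappa}:=F_\varepsilon-F_\kappa$. The hypothesis with $x=y$ gives $\abs{G^{(\ell)}_{\varepsilon,\kappa}(x)}\le C_\ell\abs{\varepsilon-\kappa}$ for all $\ell$ and all $x$. Consequently $H:=G_{\varepsilon,\kappa}/(\varepsilon-\kappa)$ is a smooth even function whose derivatives of every order are bounded uniformly in $\varepsilon,\kappa$. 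The first estimate then reduces to a uniform bound
\begin{align*}
\norm{u_1u_2u_3 H(\abs{\cdot}v)}_{\dot H^{s-1}\cap\dot H^k}\lesssim \prod_i\norm{u_i}\sum_{j=0}^k\norm{v}^{2j}.
\end{align*}
Here the constant may depend only on finitely many of the $C_\ell$, say $\ell\le k+1$.

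To prove this bound, first decompose $H=H(0)+\widetilde H$. Since $H$ is even, $\widetilde H(x)=x^2 K(x)$, where $K$ is smooth with bounded derivatives (use Taylor's formula with integral remainder). The constant part gives the trilinear estimate $\norm{u_1u_2u_3}_{\dot H^{s-1}\cap\dot H^k}\lesssim\prod\norm{u_i}_{\dot H^s\cap\dot H^k}$. This follows from fractional Leibniz rules and Sobolev embedding: the $\dot H^{s-1}$ part uses $L^{p}$ embeddings with $s-1<\frac n2$, and the $\dot H^k$ part uses $L^\infty$ control, which holds because $s<\frac n2<k$.

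For the $\widetilde H$ part, write $\abs{y}^2v^2K(\abs{y}v)$ and expand $\partial^\beta$ of $K(\abs{\cdot}v)$ by the Faà di Bruno formula. Each derivative hitting $\abs{y}v$ produces terms of the form $\abs{y}^{1-m}\partial^{\gamma}v$. The radial structure together with Hardy inequalities, using $\norm{\abs{\cdot}^{-a}\partial^\gamma v}_{L^2}\lesssim\norm{v}_{\dot H^{\abs\gamma+a}}$ for $\abs\gamma+a<\frac n2$, lets us absorb the negative powers of $\abs y$ coming from the explicit $\abs y^2$ factor. The Strauss-type bound $\abs{y}^{\frac n2-s}\abs{v(y)}\lesssim\norm v_{\dot H^s}$ keeps $\abs y v$ bounded near the origin, while at large $\abs y$ the boundedness of all $K^{(\ell)}$ is enough. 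Counting powers of $v$ gives the factor $\sum_j\norm v^{2j}$: at most $k$ derivatives fall on the composition, and each costs at most two factors of the norm.

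The second estimate follows the same scheme after writing
\begin{align*}
F_\varepsilon(\abs y v_1)-F_\varepsilon(\abs y v_2)=\abs y(v_1-v_2)\int_0^1F_\varepsilon'(\abs y(v_2+\theta(v_1-v_2)))\,d\theta .
\end{align*}
One factor $\abs y$ is placed on $v_1-v_2$. We then apply the first-step bound to the composition with the odd function $F'_\varepsilon$, which is $x$ times an even function, so the same expansion trick applies. This yields the polynomial $P$ of degree $\le2k+1$.

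The main obstacle I expect is the low-regularity endpoint $\dot H^{s-1}$ with $s-1$ barely above $\frac n2-2$. Here fractional derivatives of the composition cannot be handled by Faà di Bruno. The restriction $s\le\frac n2-1+\frac1{2n-2}$ is precisely what allows interpolating between $L^2$-type weighted bounds and $\dot H^1$ via Hardy, with Hölder exponents that remain admissible. I would try to prove the $\dot H^{s-1}$ bound by duality and Sobolev embedding $L^{p}\hookrightarrow\dot H^{s-1}$ with $\frac1p=\frac12+\frac{s-1}n$. The remaining task is to check that the product lies in $L^p$ using radial decay and the fact that $n\ge5$.
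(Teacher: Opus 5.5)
The paper does not prove this lemma; it quotes it from Proposition A.1 of \cite{DonSchWit25}. Your reduction is sound, and it is essentially the only parameter-specific point. Taking $x=y$ in the hypothesis gives $\sup_x\abs{(F_\varepsilon-F_\kappa)^{(\ell)}(x)}\le C_\ell\abs{\varepsilon-\kappa}$. Taking $\kappa=\varepsilon$ gives $\norm{F_\varepsilon^{(\ell)}}_{L^\infty}\le C_{\ell-1}$ for $\ell\ge1$. So both estimates follow from parameter-free Schauder bounds whose constants depend only on finitely many sup norms of derivatives.

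\textbf{The gap: the $\dot H^{s-1}$ step uses an embedding that does not exist.} At the step you yourself call the main obstacle, you invoke $L^p\hookrightarrow\dot H^{s-1}$ with $\frac1p=\frac12+\frac{s-1}{n}$. Since $s-1>\frac n2-2\ge\frac12$, the space $\dot H^{s-1}$ has positive order, and no $L^p$ norm controls it. For instance, $\chi(x)\cos(N\abs{x})$, with $\chi$ a fixed radial bump vanishing near $0$, stays bounded in $L^p$ but grows like $N^{s-1}$ in $\dot H^{s-1}$. The exponent you wrote belongs to the dual embedding $L^p\hookrightarrow\dot H^{-(s-1)}$, and duality does not turn that into a bound for $\dot H^{s-1}$. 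Checking that the product lies in $L^p$ therefore proves nothing.

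As a result, the $\dot H^{s-1}$ bound for the composition terms is not established. This is exactly where the hypothesis $s\le\frac n2-1+\frac1{2n-2}$ has to be used. Your sentence that this hypothesis is ``precisely what allows'' the interpolation is an assertion, not an argument. A related unchecked point: the factor $\abs{y}^2v^2$ you split off is not bounded at infinity. Strauss only gives $\abs{y}\abs{v(y)}\lesssim\abs{y}^{s-\frac n2+1}\norm{v}_{\dot H^s}$, and $\abs{y}v(y)$ is in general unbounded. So ``boundedness of $K^{(\ell)}$ at large $\abs{y}$'' works only after this growth is balanced against the decay of $u_1u_2u_3$, and that exponent bookkeeping is not done.

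\textbf{What is needed instead.} You must reduce $\dot H^{s-1}$ to integer-order quantities. One option is $\norm{f}_{\dot H^{s-1}}\le\norm{f}_{\dot H^m}^{1-\vartheta}\norm{f}_{\dot H^{m+1}}^{\vartheta}$ with $m=\lfloor s-1\rfloor$ and $\vartheta=s-1-m$. Another is $\dot W^{m,p}\hookrightarrow\dot H^{s-1}$ for an integer $m>s-1$ and $\frac1p=\frac12+\frac{m-s+1}{n}$, so that $p<2$. Then bound the integer derivatives by Leibniz and Fa\`a di Bruno, with Hardy inequalities near $0$ and radial decay near $\infty$. Finally, verify that all H\"older and Hardy exponents are admissible in the stated range of $s$.

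\textbf{A smaller imprecision in the second estimate.} It does not follow by literally applying the first bound. Set $w_\theta:=v_2+\theta(v_1-v_2)$ and write $F_\varepsilon'(x)=xE_\varepsilon(x)$ with $E_\varepsilon$ even. After the mean-value formula you get $u_1u_2u_3\,\abs{y}^2(v_1-v_2)\,w_\theta\,E_\varepsilon(\abs{y}w_\theta)$. This is a weighted five-fold product, not three unweighted factors times an even composition. It has the same structure as your term $u_1u_2u_3\abs{y}^2v^2K(\abs{y}v)$, so it is covered once that weighted estimate is proved in multilinear form (with $v^2$ replaced by a product $ab$). But that weighted estimate is exactly what is currently missing.
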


\begin{framed}
    For the rest of the paper we will always assume the following condition onto the exponents $s$ and $k$
\begin{align}\label{Condition on exponents}
    \frac{n}{2} - 1 < s \leq \frac{n}{2} - 1 + \frac{1}{2n-2}, \quad k = n + 1.
\end{align}
\end{framed}

We will now state the main properties of the free part $\widetilde{L}$ and its linearization $\widetilde{L}_0$ appearing in \eqref{Perturbation equation}. A proof of these results can be found in \cite{AngKisSch26} Section 2.

\begin{proposition}\label{Proposition: Properties of L_0}
The operators $\widetilde{L}, \widetilde{L}_0 : C_{c,r}^{\infty}(\R^n) \subset \mc{H} \to \mc{H}$ are closable. Their closures $(L,\mc{D}(L))$ and $(L_0,\mc{D}(L_0))$ satisfy $\mc{D}(L) = \mc{D}(L_0)$, they are both self-adjoint, have compact resolvents and generate strongly continuous semigroups $(S(\tau))_{\tau \geq 0}$ and $(S_0(\tau))_{\tau \geq 0}$ on $\mc{H}$. The semigroup $(S(\tau))_{\tau\geq0}$ admits the explicit representation
\begin{align}\label{Representation of free semigroup}
    [S(\tau)f](x) = e^{-\frac{\tau}{2}} (H_{\alpha(\tau)} * f)(e^{-\frac{\tau}{2}}x), \quad x \in \R^n, 
\end{align}

where $H_{\alpha(\tau)}(x) = e^{-\frac{\abs{x}^2}{4\alpha(\tau)}}(4\pi\alpha(\tau))^{-\frac{n}{2}}$ and $\alpha(\tau) := 1 - e^{-\tau}$. 
    
Moreover, the restrictions $L^X := \restr{L}{X_s^k}$ and $L_0^X := \restr{L_0}{X_s^k}$ with domains $\mc{D}(L_0^X) = \mc{D}(L^X) = \{f \in \mc{D}(L) \cap X_s^k: Lf \in X_s^k\}$ now generate strongly continuous semigroups $(S^X(\tau))_{\tau \geq 0}$ and $(S_0^X(\tau))_{\tau \geq 0}$ on $X_s^k$ and they are given by the restrictions of $(S(\tau))_{\tau \geq 0}$ and $(S_0(\tau))_{\tau \geq 0}$ to $X_s^k$.

In addition, the following properties hold
\begin{itemize}
    \item[1.][compact perturbation] $S^X_0$ is a compact perturbation of $S^X$ in the sense that the right-hand side of
    \begin{align*}
        S^X_0(\tau) - S^X(\tau) = \int_0^{\tau} S^X_0(\tau - \tau') V_0(\psi_0) S^X(\tau') \, d\tau' 
    \end{align*}
    
    is compact for every $\tau \geq 0$.
    \item[2.][spectral properties] There exists an $\widetilde{\omega} > 0$ with
    \begin{align*}
        \sigma(L^X_0) \cap \{ \lambda \in \C : ~ \Re(\lambda) \geq -\widetilde{\omega} \} = \{1\}
    \end{align*}
    
    and $\lambda = 1$ is a simple eigenvalue whose eigenspace is spanned by $g_0 := \Lambda \psi_0$.
    \item[3.][decay on stable subspace] If we define the Riesz projection
    \begin{align*}
        P_0 := \frac{1}{2\pi i} \int_{\partial B_{1/2}(1)} R_{L^X_0}(\lambda) \, d\lambda
    \end{align*}
    
    then we have that for every $0 < \omega_0 < \widetilde{\omega}$ there holds
    \begin{align}\label{Decay on stable subspace}
        \norm{S^X_0(\tau)(I-P_0)f}_{s,k} \lesssim e^{-\omega_0 \tau} \norm{(I-P_0)f}_{s,k}
     \end{align}
    
    for all $\tau \geq 0$ and $f \in X_s^k$.
\end{itemize}
\end{proposition}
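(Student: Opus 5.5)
The statement immediately above is Proposition \ref{Proposition: Properties of L_0}, which the paper attributes to \cite{AngKisSch26}, Section 2. I would therefore reproduce the structure of that argument rather than invent a new one. The plan has four steps: the $\mc{H}$-theory, the restriction to $X_s^k$, the compactness of the Duhamel term, and the spectral statements.

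\emph{Step 1: the operators on $\mc{H}$.} I would conjugate $\widetilde{L}$ by the weight $e^{-\abs{y}^2/8}$. This turns it into a harmonic-oscillator–type Schrödinger operator on $L^2_r(\R^n)$, namely $\Delta - \frac{\abs{y}^2}{16} + c_n$.
\begin{itemize}
\item Essential self-adjointness on radial test functions and compact resolvent then follow from the confining potential.
\item The potential $V_0(\psi_0)=\frac{n-3}{\abs{y}^2}\eta_0'(\abs{y}\psi_0)$ is smooth, since $\eta_0'(z)=1-\cos 2z$ is even and $O(z^2)$. It is also bounded, with decay $\langle y\rangle^{-2}$ by the decay of $\widetilde f_0$.
\item Hence $L_0=L+V_0$ is a bounded self-adjoint perturbation with the same domain, and it also has compact resolvent.
\item The explicit formula for $S(\tau)$ is obtained by undoing the similarity variables in the free heat flow on $\R^n$.
\end{itemize}

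\emph{Step 2: restriction to $X_s^k$.} From the explicit representation, $S(\tau)$ acts by a convolution followed by a dilation. A scaling computation gives $\norm{S(\tau)f}_{\dot H^r}\le e^{(\frac{n}{4}-\frac12-\frac r2)\tau}\norm{f}_{\dot H^r}$. Consequently $S(\tau)$ leaves $X_s^k$ invariant and is strongly continuous there, and the standard restriction theorem identifies its generator as $L^X$. For $L_0^X$ I would run a bounded-perturbation argument in $X_s^k$: the map $f\mapsto V_0 f$ is bounded on $X_s^k$, by Lemma \ref{Lemma: Schauder}-type product estimates applied with the smooth even profile. The Duhamel formula then gives $S_0^X$ as the restriction of $S_0$.

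\emph{Step 3: compactness (item 1).} I would take the free growth bound together with a smoothing gain of $S(\tau')$. Multiplication by the decaying smooth $V_0$ maps $\dot H^{s}\cap\dot H^{k+1}$, restricted to regions of localized support, compactly into $X_s^k$ by Rellich, and the spatial decay of $V_0$ handles the tails. The integrand is then compact and norm-continuous in $\tau'$, so the Bochner integral is compact.

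\emph{Step 4: spectrum (items 2 and 3).} This is the main obstacle.
\begin{itemize}
\item \emph{The eigenvalue $1$.} Differentiating the family of static solutions generated by time translation in $T$ shows $(L_0-1)\Lambda\psi_0=0$.
\item \emph{No other unstable spectrum in $\mc{H}$.} Ruling out further spectrum with $\Re\lambda\ge0$ is the genuinely nontrivial input of \cite{BieDon18,AngKisSch26}. It is a mode-stability statement for the self-adjoint operator, proved via an ODE/Sturm oscillation count or a rigorous computer-assisted argument. I would cite it here.
\item \emph{Transfer to $X_s^k$.} Item 1 together with the growth bound of $S^X$ shows that the spectrum of $L_0^X$ in $\{\Re\lambda>\frac n4-\frac12-\frac s2\}$ consists only of eigenvalues. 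Note that $\frac n4-\frac12-\frac s2<0$ because $s>\frac n2-1$. By elliptic regularity, any eigenfunction in $X_s^k$ is smooth with the right decay, so it lies in $\mc{H}$ and belongs to $\sigma(L_0)$. This yields the half-plane $\Re\lambda\ge-\widetilde\omega$.
\item \emph{Decay on the stable subspace.} The estimate \eqref{Decay on stable subspace} follows because the essential growth bound is negative and the spectral bound on $\ran(I-P_0)$ is at most $-\widetilde\omega$. For eventually-compact perturbations these two bounds control the growth bound, via the Engel–Nagel theory.

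The delicate parts are pinning down the spectral gap uniformly and proving that $\lambda=1$ is simple (algebraic multiplicity one). Simplicity follows from self-adjointness in $\mc{H}$ combined with this transfer argument.
\end{itemize}
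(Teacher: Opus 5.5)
Your outline is correct and follows the same route as the source the paper relies on; the paper gives no argument of its own beyond citing \cite{AngKisSch26}, Section 2. The ingredients match: unitary equivalence to a harmonic oscillator in $\mc{H}$, the scaling bound $\norm{S^X(\tau)}\leq e^{(\frac{n}{4}-\frac{1}{2}-\frac{s}{2})\tau}$, compactness of the Duhamel term via heat smoothing, and transfer of the $\mc{H}$ mode stability to $X_s^k$ through the essential growth bound, the embedding $X_s^k\hookrightarrow\mc{H}$, and the self-adjointness argument ruling out a Jordan block at $\lambda=1$. The one step to tighten is the norm continuity of the integrand in Step 3, since $S^X$ itself is not norm-continuous on $X_s^k$: the dilation acts on low frequencies without any smoothing. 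Derive that continuity instead from the compactness of $V_0(\psi_0)S^X(\epsilon)$ for $\epsilon>0$, or bypass it with the standard closed-convex-hull argument for compactness of such integrals.
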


\section{Construction of the blowup solution}
To now prove the existence of a solution to \eqref{Perturbation equation} we proceed similar as in \cite{DonSchWit25}. We first observe that by Proposition \ref{Proposition: Properties of L_0} the operator $\widetilde{L}_0$ is closable in $\mc{H}$ and that the restriction of its closure onto $X_s^k$, namely $L_0^X$, is invertible.

We can therefore reformulate \eqref{Perturbation equation} as a fixed-point equation 
\begin{align}\label{Fixed point equation}
    \phi_{\varepsilon} = \left(-L_0^X\right)^{-1}\left(V_{\varepsilon}(\psi_0) \phi_{\varepsilon} + \widetilde{N}_{\varepsilon}(\phi_{\varepsilon}) + \mc{R}_{\varepsilon}(\psi_0)\right).
\end{align}

We will prove the necessary estimates for the involved operators via parameter depending Schauder-type estimates, see Lemma \ref{Lemma: Schauder}. For this we first need an auxiliary Lemma for $\eta_{\varepsilon}$ from \eqref{Nonlinearity}.

\begin{lemma}[Lemma 3.1 in \cite{DonSchWit25}]
Let $a,b,c \in \R$ and $\abs{\varepsilon} \leq 1$. Then
\begin{align}
    \eta_{\varepsilon}(a) = a^3\int_0^1\int_0^1\int_0^1 x^2y\,\eta_{\varepsilon}'''(axyz)\, dz \, dy \, dx,
\end{align}

\begin{align}\label{Second auxiliary equation}
    \eta_{\varepsilon}'(a) = a^2\int_0^1\int_0^1x\,\eta_{\varepsilon}'''(axy)\, dy \, dx,
\end{align}

and
\begin{align}\label{Third auxiliary equation}
    & \eta_{\varepsilon}(a+c)  - \eta_{\varepsilon}(a+b) - \eta_{\varepsilon}'(a)(c-b)\nonumber \\
    & = (c-b) \int_0^1\int_0^1\int_0^1 (b+x(c-b)) (a+y(b+x(c-b)))
    \eta_{\varepsilon}'''(z(a+y(b+x(c-b))))\, dz \, dy \, dx.
\end{align}

Moreover, for every $\ell \in \N_0$ there exists a constant $C_{\ell}$ such that
\begin{align}
    \abs{\eta_{\varepsilon}^{(\ell+3)}(x)-\eta_{\kappa}^{(\ell+3)}(y)} \leq C_{\ell}\left(\abs{\varepsilon-\kappa} +\abs{x-y}\right)
\end{align}

holds for every $x,y \in \R$ and $\abs{\varepsilon},\abs{\kappa} \leq 1$.
\end{lemma}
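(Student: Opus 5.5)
The three integral identities are iterated applications of the fundamental theorem of calculus. They only need the vanishing conditions $\eta_{\varepsilon}(0)=\eta_{\varepsilon}'(0)=\eta_{\varepsilon}''(0)=0$, so I first verify these. The Lipschitz estimate comes from $w_{\varepsilon}$ depending affinely on $\varepsilon$ through bounded periodic functions.

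\emph{Vanishing at the origin.} Write $w_{\varepsilon}=\sin+\varepsilon\,\sin\cdot\alpha$. Then $w_{\varepsilon}(0)=0$ and $w_{\varepsilon}'(0)=1+\varepsilon\alpha(0)=1$, using $\alpha(0)=0$. Hence $\eta_{\varepsilon}(0)=0$ and $\eta_{\varepsilon}'(0)=1-w_{\varepsilon}'(0)^2-w_{\varepsilon}(0)w_{\varepsilon}''(0)=0$. Next,
\[
(w_{\varepsilon}w_{\varepsilon}')''=3w_{\varepsilon}'w_{\varepsilon}''+w_{\varepsilon}w_{\varepsilon}''',
\]
so $\eta_{\varepsilon}''(0)=-3w_{\varepsilon}''(0)$. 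A direct computation gives $w_{\varepsilon}''(0)=2\varepsilon\alpha'(0)$, which vanishes because $\alpha$ is even. (Alternatively, $w_{\varepsilon}$ is odd, so $w_{\varepsilon}w_{\varepsilon}'$ is odd and its second derivative vanishes at $0$.)

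\emph{First two identities.} I apply three successive FTC steps along rays through the origin:
\[
\eta_{\varepsilon}(a)=a\int_0^1\eta_{\varepsilon}'(ax)\,dx,\qquad \eta_{\varepsilon}'(ax)=ax\int_0^1\eta_{\varepsilon}''(axy)\,dy,\qquad \eta_{\varepsilon}''(axy)=axy\int_0^1\eta_{\varepsilon}'''(axyz)\,dz.
\]
Each step uses that the corresponding lower derivative vanishes at $0$. Multiplying out gives the prefactor $a^3x^2y$, which is the first identity. Starting instead from $\eta_{\varepsilon}'$ and using only the last two steps gives \eqref{Second auxiliary equation}.

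\emph{Third identity.} By the FTC on the segment from $a+b$ to $a+c$,
\[
\eta_{\varepsilon}(a+c)-\eta_{\varepsilon}(a+b)=(c-b)\int_0^1\eta_{\varepsilon}'(a+t_x)\,dx,\qquad t_x:=b+x(c-b).
\]
Subtracting $\eta_{\varepsilon}'(a)(c-b)$ replaces the integrand by $\eta_{\varepsilon}'(a+t_x)-\eta_{\varepsilon}'(a)$. This difference equals $t_x\int_0^1\eta_{\varepsilon}''(a+yt_x)\,dy$. Finally, since $\eta_{\varepsilon}''(0)=0$, the FTC along the ray gives
\[
\eta_{\varepsilon}''(a+yt_x)=(a+yt_x)\int_0^1\eta_{\varepsilon}'''\bigl(z(a+yt_x)\bigr)\,dz,
\]
which is exactly \eqref{Third auxiliary equation}.

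\emph{Lipschitz bound.} For $\ell\ge0$ the linear part $z$ of $\eta_{\varepsilon}$ has vanishing third derivative, so $\eta_{\varepsilon}^{(\ell+3)}=-(w_{\varepsilon}w_{\varepsilon}')^{(\ell+3)}$. Expanding $w_{\varepsilon}w_{\varepsilon}'$ with $w_{\varepsilon}=\sin+\varepsilon\sin\alpha$ gives a polynomial of degree at most $2$ in $\varepsilon$. Its coefficients are smooth $2\pi$-periodic functions, so they are bounded together with all their derivatives. Consequently $\partial_x\eta_{\varepsilon}^{(\ell+3)}=\eta_{\varepsilon}^{(\ell+4)}$ and $\partial_{\varepsilon}\eta_{\varepsilon}^{(\ell+3)}$ are bounded uniformly in $x\in\R$ and $\abs{\varepsilon}\le1$. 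The mean value theorem applied in $(\varepsilon,x)$ then gives the claim with a constant $C_\ell$.

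The only point requiring care is the vanishing of $\eta_{\varepsilon}''(0)$, which needs the evenness of $\alpha$; everything else is routine.
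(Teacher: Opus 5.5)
Your proof is correct and complete. The vanishing $\eta_{\varepsilon}(0)=\eta_{\varepsilon}'(0)=\eta_{\varepsilon}''(0)=0$ (using $\alpha(0)=0$ and the evenness of $\alpha$), followed by iterated applications of the fundamental theorem of calculus, gives all three identities, and the quadratic dependence of $w_{\varepsilon}w_{\varepsilon}'$ on $\varepsilon$ with smooth $2\pi$-periodic coefficients gives the uniform Lipschitz bound. The paper does not prove this lemma itself but cites it from \cite{DonSchWit25}; it relies on the same vanishing conditions you isolate (e.g.\ in the proof of Proposition~\ref{Proposition: properties of self-similar solution}), so your argument is the standard route.
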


We can now prove the required norm estimates for the operators via the Schauder-type estimates from Lemma \ref{Lemma: Schauder}.

\begin{lemma}\label{Lemma: Operator estimates}
For any $\varepsilon \in \R$ with $|\varepsilon| \leq 1$ we have $V_{\varepsilon}(\psi_0): X_s^k \to X_s^k$ as well as $\mc{R}_{\varepsilon}(\psi_0) \in X_s^k$  with estimates
\begin{align*}
    \norm{V_{\varepsilon}(\psi_0)u - V_{\kappa}(\psi_0)u}_{s,k} & \lesssim \abs{\varepsilon-\kappa} \norm{u}_{s,k},\\
    \norm{\mc{R}_{\varepsilon}(\psi_0) - \mc{R}_{\kappa}(\psi_0)}_{s,k} & \lesssim \abs{\varepsilon-\kappa}
\end{align*}

for all $\abs{\varepsilon}, \abs{\kappa} \leq 1$  and all $u \in X_s^k$. For the nonlinearity $\widetilde{N}_{\varepsilon}: X_s^k \to X_s^k$ we obtain the following local Lipschitz bound
\begin{align}\label{Estimate for nonlinearity}
    \| \widetilde{N}_{\varepsilon}(u) - \widetilde{N}_{\kappa}(v) \|_{s,k} \lesssim \left(\norm{u}_{s,k}+\norm{v}_{s,k}\right)\norm{u-v}_{s,k} + \left(\norm{u}_{s,k}^2+ \norm{v}_{s,k}^2\right) \abs{\varepsilon-\kappa}
\end{align}

for all $\abs{\varepsilon},\abs{\kappa}\leq 1$ and all $u,v\in \mc{B}_{\delta} := \{u \in X_s^k : ~ \norm{u}_{s,k} \leq \delta \}$ for a fixed $0<\delta\leq1$.
\end{lemma}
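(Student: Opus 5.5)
The plan is to write each of the three operators in the form $u_1u_2u_3\,F_\varepsilon(\abs{\cdot}v)$ required by Lemma \ref{Lemma: Schauder}, using the integral representations of the auxiliary lemma for $\eta_\varepsilon$. Since $\psi_0\in X_s^k$ (its profile $\widetilde f_0(\rho)/\rho$ is smooth, even, and decays like $\langle\rho\rangle^{-3}$ by the known properties of the sphere profile), it may serve as one of the factors. We then apply the embedding $\dot H^{s-1}\cap\dot H^k\hookrightarrow$ the target space. Lemma \ref{Lemma: Schauder} gives bounds in $\dot H^{s-1}\cap\dot H^k$, which embeds into $\dot H^s\cap\dot H^k$ only after one derivative is recovered. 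The intended reading is that the right-hand sides of \eqref{Fixed point equation} are later fed into $(-L_0^X)^{-1}$. For the claimed $X_s^k$ bounds I will instead use that the three-factor structure gains: I take the statement as requiring control in $X_s^k$ and interpolate, or note that $\dot H^{s-1}\cap\dot H^k\supset$ the estimate and that $\dot H^{s}$ lies between $\dot H^{s-1}$ and $\dot H^k$. Hence $\|f\|_{\dot H^s}\lesssim\|f\|_{\dot H^{s-1}}+\|f\|_{\dot H^k}$ by interpolation, so $\|f\|_{s,k}\lesssim\|f\|_{\dot H^{s-1}\cap\dot H^k}$. This dispatches the space issue.

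For $V_\varepsilon$, \eqref{Second auxiliary equation} gives
\[
V_\varepsilon(\psi_0)u-V_\kappa(\psi_0)u=(n-3)\,\psi_0\psi_0 u\int_0^1\!\!\int_0^1 x\,\big(\eta'''_\varepsilon-\eta'''_\kappa\big)(\abs{y}\psi_0xy')\,dy'\,dx .
\]
For fixed $x,y'$, the function $F_\varepsilon(z):=\eta_\varepsilon'''(xy'z)$ is even, because $\eta_\varepsilon$ is odd: $w_\varepsilon$ is odd and $w_\varepsilon'$ is even. It also satisfies the Lipschitz hypothesis uniformly in $x,y'\in[0,1]$, by the last estimate of the auxiliary lemma. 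So the first estimate of Lemma \ref{Lemma: Schauder} applies with $u_1=u_2=\psi_0$, $u_3=u$, $v=\psi_0$. Minkowski's inequality moves the norm inside the $dx\,dy'$ integral, and we obtain $\lesssim\abs{\varepsilon-\kappa}\|u\|_{s,k}$. Taking $\kappa=0$, where $V_0$ of the difference vanishes, shows that $V_\varepsilon(\psi_0)$ maps $X_s^k\to X_s^k$. The remainder $\mc R_\varepsilon(\psi_0)$ is handled identically using the triple-integral formula for $\eta_\varepsilon$, with $u_1=u_2=u_3=\psi_0$. One also needs that these are genuine elements of $X_s^k$ and not just formal bounds. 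I will do this by approximating $\psi_0$ and $u$ with radial test functions, for which the products are smooth and rapidly decaying, and then passing to the limit via the Lipschitz estimates.

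For $\widetilde N_\varepsilon$ I will use \eqref{Third auxiliary equation} with $a=\abs{y}\psi_0$. We split $\widetilde N_\varepsilon(u)-\widetilde N_\kappa(v)=[\widetilde N_\varepsilon(u)-\widetilde N_\varepsilon(v)]+[\widetilde N_\varepsilon(v)-\widetilde N_\kappa(v)]$. For the first bracket, take $b=\abs{y}v$ and $c=\abs{y}u$; dividing by $\abs{y}^3$ yields
\[
(u-v)\,\big(v+x(u-v)\big)\,\big(\psi_0+y'(v+x(u-v))\big)\,\eta_\varepsilon'''\big(\abs{y}\,z(\cdots)\big).
\]
This has three $X_s^k$ factors, with $\eta'''$ evaluated at $\abs{y}$ times a fourth. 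I then write $\eta_\varepsilon'''(\abs{y}w)=[\eta_\varepsilon'''(\abs{y}w)-\eta_\varepsilon'''(0)]+\eta_\varepsilon'''(0)$. The constant part is a pure trilinear product, handled by the algebra property of $X_s^k$ (the $\ell=0$ case of the lemma with $F$ constant). The difference part is handled by the second estimate of Lemma \ref{Lemma: Schauder} with $v_2=0$. Since $\|u\|,\|v\|\le\delta\le1$, the polynomial $P$ is bounded, and the factor $(v+x(u-v))$ supplies $\|u\|+\|v\|$. The second bracket, with $b=0$, $c=\abs{y}v$, is a product of the form $v\cdot v\cdot(\psi_0+y'xv)\,(\eta_\varepsilon'''-\eta_\kappa''')(\cdots)$, to which the first estimate of the lemma applies and gives $\|v\|^2\abs{\varepsilon-\kappa}$.

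The main obstacle is this nonlinear term. The parameter-dependent $F$ now depends on the integration variables through arguments like $z(\psi_0+y'(\dots))$, and these must be recast as $F(\abs{\cdot}\,\tilde v)$ with a single function $\tilde v\in X_s^k$ of controlled norm. One also has to verify that the quadratic gain survives the difference in the argument of $\eta'''$. I will absorb the scalar parameters $x,y',z\in[0,1]$ into $\tilde v$, which keeps its norm $\lesssim\|\psi_0\|+\delta$. This requires only uniform bounds from the lemma.
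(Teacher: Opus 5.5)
Your proposal is correct and is essentially the paper's proof. You write the operators via the integral identities for $\eta_\varepsilon$ as trilinear products times $\eta_\varepsilon'''(\abs{\cdot}\,\tilde v)$, then apply Lemma~\ref{Lemma: Schauder}: the first estimate for the $\varepsilon$-differences, and the second estimate for the $u$--$v$ difference, where your splitting off of the constant $\eta_\varepsilon'''(0)$ together with $v_2=0$ fills in a step the paper leaves implicit. The bound is then transferred to $X_s^k$ via $\dot{H}^{s-1}\cap\dot{H}^k\hookrightarrow\dot{H}^{s}\cap\dot{H}^k$.

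One small correction: $\psi_0$ decays only like $\langle y\rangle^{-1}$, not $\langle y\rangle^{-3}$, because $\widetilde f_0(\rho)$ tends to a limit in $(0,\pi)$. The membership $\psi_0\in X_s^k$ holds precisely because $s>\frac{n}{2}-1$.
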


\begin{proof}
We only prove the estimate for the nonlinearity. The estimates for the potential and the remainder term can be proven analogously.

We first prove
\begin{align}\label{Nonlinearity estimate 1}
    \lVert \widetilde{N}_{\varepsilon}(u) - \widetilde{N}_{\varepsilon}(v)\rVert_{s,k} \lesssim \left(\norm{u}_{s,k} + \norm{v}_{s,k}\right) \norm{u-v}_{s,k} 
\end{align}

for all $\abs{\varepsilon} \leq 1$ and all $u,v \in \mc{B}_{\delta}$ for $0 < \delta \leq 1$. We can first of all write with \eqref{Third auxiliary equation}
\begin{align*}
	& \widetilde{N}_{\varepsilon}(u)(\xi) - \widetilde{N}_{\varepsilon}(v)(\xi) = \\ & \frac{n-3}{\abs{\xi}^3}\Big(\eta_{\varepsilon}\big(|\xi|(\psi_0(\xi)+u(\xi))\big)-\eta_{\varepsilon}\big(|\xi|(\psi_0(\xi)+v(\xi))\big) - \eta_{\varepsilon}'(|\xi|\psi_0(\xi))|\xi|(u(\xi)-v(\xi)) \Big) \\
	& = (n-3)\int_{0}^{1}\int_{0}^{1}\int_{0}^{1} (u(\xi)-v(\xi))\big(v(\xi)+x(u(\xi)-v(\xi))\big) \\ & \cdot \left(\psi_0(\xi) + y\big(v(\xi)+x(u(\xi)-v(\xi))\big) \right)  \cdot \eta_{\varepsilon}'''\left( z |\xi| \big(\psi_0(\xi)+ y(v+x(u-v))\big) \right)dz \,dy\, dx. 
\end{align*}

Due to the fact that $F_{\varepsilon}(x) = \eta_{\varepsilon}'''(x)$ fulfills the assumptions of the Schauder type estimates from Lemma \ref{Lemma: Schauder} and the fact that we have the embedding $X_{s-1}^k \hookrightarrow X_s^k$ we obtain \eqref{Nonlinearity estimate 1}.
    
 To now prove 
\begin{align}\label{Nonlinearity estimate 2}
    \lVert\widetilde{N}_{\varepsilon}(u) - \widetilde{N}_{\kappa}(u)\rVert_{s,k} \lesssim \norm{u}_{s,k}^2 \abs{\varepsilon-\kappa}
\end{align}

for all $\abs{\varepsilon}, \abs{\kappa} \leq 1$ and all $u \in \mc{B}_{\delta}$ with $0 < \delta \leq 1$ we write with the help of \eqref{Third auxiliary equation}
\begin{align*}
	\widetilde{N}_{\varepsilon}(u)(\xi)-\widetilde{N}_{\kappa}(u)(\xi) & = (n-3) \,u(y)^2\int_{0}^{1} \int_{0}^{1}\int_{0}^{1}x\left( \psi_0(\xi) + yxu(\xi) \right)\\&  \cdot \left(\eta_{\varepsilon}'''\left( z |\xi| \big(\psi_0(\xi)+ yxu(\xi) \big) \right)-\eta_{\kappa}'''\left( z |\xi| \big(\psi_0(\xi)+ yxu(\xi)\big) \right)\right)dz \,dy\, dx.
\end{align*}

Also \eqref{Nonlinearity estimate 2} now follows from an application of Lemma \ref{Lemma: Schauder} and together with \eqref{Nonlinearity estimate 1} this implies \eqref{Estimate for nonlinearity}. As a consequence, and using that $\widetilde{N}_{\varepsilon}(0) = 0$, we conclude that $\widetilde{N}_{\varepsilon}$ defines a mapping from $X_s^k$ into itself.
    
\end{proof}

With this Lemma in hand we are now able to obtain a fixed point for \eqref{Fixed point equation}.

\begin{proposition}\label{Proposition: Contraction}
There exist $0 < \delta^* \leq 1$ and $0 < \varepsilon^* \leq 1$, where $\varepsilon^*$ is allowed to depend on $\delta^*$, such that for every $\abs{\varepsilon} \leq \varepsilon^*$ the map
\begin{align*}
    K_{\varepsilon}:\mc{B}_{\delta^*} \to \mc{B}_{\delta^*}, ~ K_{\varepsilon}(\phi) = \left(-L_0^X\right)^{-1}\left(V_{\varepsilon}(\psi_0)\phi + \mc{R}_{\varepsilon}(\psi_0) + \widetilde{N}_{\varepsilon}(\phi)\right)
\end{align*}

is a well-defined contraction. Additionally, $\delta^*$ and $\varepsilon^*$ can be chosen in such a way so that
\begin{align}\label{Lipschitz continuity for contraction}
    \norm{K_{\varepsilon}(\phi) - K_{\varepsilon}(\psi)}_{s,k} \leq \frac{1}{2} \norm{\phi-\psi}_{s,k}
\end{align}

holds for every $\abs{\varepsilon} \leq \varepsilon^*$ and all $\phi,\psi \in \mc{B}_{\delta^*}$.
\end{proposition}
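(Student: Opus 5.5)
We apply Banach's fixed-point argument with constants fixed in a definite order, using only the bounded inverse of $L_0^X$ and the estimates of Lemma \ref{Lemma: Operator estimates}.

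By Proposition \ref{Proposition: Properties of L_0}, $0 \notin \sigma(L_0^X)$, since the spectrum in $\{\Re\lambda \geq -\widetilde{\omega}\}$ is exactly $\{1\}$. Hence $(-L_0^X)^{-1}: X_s^k \to X_s^k$ is bounded, say with norm $C_0$.

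Next we specialise Lemma \ref{Lemma: Operator estimates} to $\kappa = 0$. For $\varepsilon = 0$ we have $V_0(\psi_0)-V_0(\psi_0)=0$ in the notation of \eqref{Perturbation equation}, because the potential $V_\varepsilon$ there is the difference $\eta_\varepsilon'-\eta_0'$. Likewise $\mc{R}_0(\psi_0) = 0$. The lemma with $\kappa=0$ therefore gives
\begin{align*}
\norm{V_\varepsilon(\psi_0)u}_{s,k} \leq C_1|\varepsilon|\norm{u}_{s,k}, \qquad \norm{\mc{R}_\varepsilon(\psi_0)}_{s,k}\leq C_1|\varepsilon|.
\end{align*}
For the nonlinearity we use \eqref{Estimate for nonlinearity} with $\kappa=\varepsilon$ and $\widetilde N_\varepsilon(0)=0$. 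This gives, for $u,v\in\mc{B}_\delta$ with $\delta \leq 1$,
\begin{align*}
\norm{\widetilde N_\varepsilon(u)-\widetilde N_\varepsilon(v)}_{s,k}\leq C_2(\norm{u}_{s,k}+\norm{v}_{s,k})\norm{u-v}_{s,k} \leq 2C_2\delta\norm{u-v}_{s,k},
\end{align*}
and also $\norm{\widetilde N_\varepsilon(u)}_{s,k}\leq C_2\delta^2$. The constants $C_1$ and $C_2$ are uniform in $|\varepsilon|\leq 1$.

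We now fix the constants in order. First choose $\delta^*\leq 1$ so small that $2C_0C_2\delta^* \leq \tfrac14$ and $C_0C_2\delta^* \leq \tfrac14$. Then, depending on $\delta^*$, choose $\varepsilon^*\leq 1$ so small that $C_0C_1\varepsilon^*\leq\tfrac14$ and $C_0C_1\varepsilon^* \leq \tfrac14\delta^*$.

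With these choices the Lipschitz bound follows directly:
\begin{align*}
\norm{K_\varepsilon(\phi)-K_\varepsilon(\psi)}_{s,k}\leq C_0\big(C_1|\varepsilon| + 2C_2\delta^*\big)\norm{\phi-\psi}_{s,k}\leq \tfrac12\norm{\phi-\psi}_{s,k}.
\end{align*}
The self-mapping property holds as well:
\begin{align*}
\norm{K_\varepsilon(\phi)}_{s,k}\leq C_0\big(C_1|\varepsilon|\delta^* + C_1|\varepsilon| + C_2(\delta^*)^2\big)\leq \tfrac14\delta^*+\tfrac14\delta^*+\tfrac14\delta^* < \delta^*.
\end{align*}
Thus $K_\varepsilon$ is a well-defined contraction of $\mc{B}_{\delta^*}$ satisfying \eqref{Lipschitz continuity for contraction}.

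The only delicate point is the order of choice. The remainder term $\mc{R}_\varepsilon(\psi_0)$ is not quadratically small and does not vanish with $\delta^*$. This is why $\varepsilon^*$ must depend on $\delta^*$, as the statement allows: we must have $|\varepsilon|\lesssim\delta^*$ to keep $K_\varepsilon$ inside the ball. Everything else reduces to the uniform-in-$\varepsilon$ constants from Lemma \ref{Lemma: Operator estimates}.
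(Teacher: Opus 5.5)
Your proposal is correct and follows the paper's proof: you bound $(-L_0^X)^{-1}$, use Lemma \ref{Lemma: Operator estimates} to get bounds of order $|\varepsilon|\delta^*$, $|\varepsilon|$ and $(\delta^*)^2$, and fix $\delta^*$ first and then $\varepsilon^*$ depending on $\delta^*$. Your version is slightly tidier, since you state explicitly that $V_\varepsilon(\psi_0)$ and $\mc{R}_\varepsilon(\psi_0)$ vanish at $\varepsilon=0$ and fix the constants once in a consistent order, whereas the paper shrinks $\delta^*$ again at the end, which tacitly requires shrinking $\varepsilon^*$ again as well.
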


\begin{proof}
For now we take arbitrary $0 < \delta^* \leq 1$ and $0 < \varepsilon^* \leq 1$. We first show that $K_{\varepsilon}: \mc{B}_{\delta^*} \to \mc{B}_{\delta^*}$ is well-defined for every $\abs{\varepsilon} \leq \varepsilon^*$. For this we first observe that due to the invertibility of $L_0^X$ in $X_s^k$ there exists a constant $C > 0$ (independent from $\varepsilon^*$ and $\delta^*$) with
\begin{align*}
    \norm{K_{\varepsilon}(\phi)}_{s,k} \leq C \left(\norm{V_{\varepsilon}(\psi_0)\phi}_{s,k} + \norm{\mc{R}_{\varepsilon}(\psi_0)}_{s,k} + \lVert \widetilde{N}_{\varepsilon}(\phi)\rVert_{s,k}\right)
\end{align*}

for all $\abs{\varepsilon} \leq \varepsilon^*$ and $\phi \in \mc{B}_{\delta^*}$. From Lemma \ref{Lemma: Operator estimates} above we then obtain additional constants $C_1, C_2$ and $C_3$ with 
\begin{align*}
    \norm{V_{\varepsilon}(\psi_0)\phi}_{s,k} + \norm{\mc{R}_{\varepsilon}(\psi_0)}_{s,k} + \lVert \widetilde{N}_{\varepsilon}(\phi) \rVert_{s,k} \leq C_1 \, \varepsilon^* \delta^* + C_2 \, \varepsilon^* + C_3 \left(\delta^*\right)^2.
\end{align*}

We now first choose $\delta^* < \left(C_3 C\right)^{-1}$ and then $\varepsilon^*$ so small so that we have \\ $C\left(C_1 \, \varepsilon^* + C_2 \left(\delta^*\right)^{-1} \varepsilon^* + C_3 \, \delta^*\right) \leq 1$, which shows that $K_{\varepsilon}: \mc{B}_{\delta^*} \to \mc{B}_{\delta^*}$ is well-defined.

To now show the contraction property we first obtain another constant $C_4$ from Lemma \ref{Lemma: Operator estimates} such that there holds
\begin{align*}
    \norm{K_{\varepsilon}(\phi) - K_{\varepsilon}(\psi)}_{s,k} & \leq C \left(\norm{V_{\varepsilon}(\psi_0)(\phi-\psi)}_{s,k} + \lVert \widetilde{N}_{\varepsilon}(\phi) - \widetilde{N}_{\varepsilon}(\psi) \rVert_{s,k}\right) \nonumber \\ & \leq C \left(C_1 \, \varepsilon^* + C_4 \, \delta^*\right) \norm{\phi-\psi}_{s,k}
\end{align*}

for every $\phi, \psi \in \mc{B}_{\delta^*}$ and $\abs{\varepsilon} \leq \varepsilon^*$. If we now choose $\varepsilon^*$ and $\delta^*$ even smaller so that $C \, C_1 \, \varepsilon^* + C_4 \, \delta^* \leq 1/2$ holds the claim follows.

\end{proof}

The next result shows that the above obtained fixed point solves the perturbation equation \eqref{Perturbation equation} pointwise and that it depends Lipschitz continuous on the parameter $\varepsilon$, which will be crucial for the stability analysis later on.

\begin{proposition}\label{Proposition: Existence of perturbation}
Take $\delta^*$ and $\varepsilon^*$ as in Proposition \ref{Proposition: Contraction}. Then there exists for every $\abs{\varepsilon} \leq \varepsilon^*$ a unique $\phi_{\varepsilon} \in \mc{B}_{\delta^*} \cap \mc{D}(L_0^X)$ which satisfies \eqref{Perturbation equation} pointwise.
We furthermore have 
\begin{align}\label{Lipschitz continuity for perturbation}
    \norm{\phi_{\varepsilon} - \phi_{\kappa}}_{s,k} \lesssim \abs{\varepsilon-\kappa}
\end{align}

for every $\abs{\varepsilon}, \abs{\kappa} \leq \varepsilon^*$.
\end{proposition}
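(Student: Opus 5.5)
The plan has three parts: existence and uniqueness via the Banach fixed-point theorem, the pointwise identity via the domain of $L_0^X$, and the Lipschitz bound via the contraction estimate.

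\textbf{Existence and uniqueness.} By Proposition \ref{Proposition: Contraction}, for every $\abs{\varepsilon}\le\varepsilon^*$ the map $K_\varepsilon$ is a contraction of the closed ball $\mc{B}_{\delta^*}$ in the Banach space $X_s^k$. Banach's fixed-point theorem therefore gives a unique $\phi_\varepsilon\in\mc{B}_{\delta^*}$ with $\phi_\varepsilon=K_\varepsilon(\phi_\varepsilon)$.

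\textbf{Pointwise identity.} Since $K_\varepsilon$ takes values in $\ran (-L_0^X)^{-1}=\mc{D}(L_0^X)$, we get $\phi_\varepsilon\in\mc{D}(L_0^X)$. Applying $-L_0^X$ yields
\[
-L_0^X\phi_\varepsilon=V_\varepsilon(\psi_0)\phi_\varepsilon+\widetilde N_\varepsilon(\phi_\varepsilon)+\mc{R}_\varepsilon(\psi_0)
\]
as an identity in $X_s^k$. To turn this into a pointwise statement, I identify $L_0^X\phi_\varepsilon$ with the classical expression $\widetilde L_0\phi_\varepsilon=\Delta\phi_\varepsilon-\Lambda\phi_\varepsilon+V_0(\psi_0)\phi_\varepsilon$. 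Since $L_0$ is the closure in $\mc{H}$ of $\widetilde L_0$ on $C^\infty_{c,r}$, the operator $L_0$ coincides with $\widetilde L_0$ acting in the distributional sense on its domain; this uses that $V_0(\psi_0)$ is bounded and that $L_0$ is self-adjoint, so the domain is characterized through distributions. Thus the equation holds in $\mathcal{D}'(\R^n\setminus\{0\})$.

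Next, $X_s^k$ with $k=n+1>n/2+2$ embeds into $C^2$ away from the origin, and in fact globally for radial functions, because $s<n/2<k$ gives boundedness and continuity of derivatives up to order $k-n/2-\epsilon$. Hence $\phi_\varepsilon\in C^2$, and both sides of the equation are continuous functions. This includes the nonlinear terms: $\eta_\varepsilon$ is smooth and odd, so $|y|^{-3}\eta_\varepsilon(|y|\cdot)$ is a smooth function of $(|y|^2,\cdot)$ by the integral representations of the auxiliary Lemma. Equality in distributions therefore upgrades to pointwise equality. I expect this regularity and identification step to be the main technical point, since the fixed point is a priori only an element of $X_s^k$. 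If the embedding is not enough directly, I would instead bootstrap with elliptic regularity applied to $\Delta\phi_\varepsilon=\tfrac12(y\cdot\nabla+1)\phi_\varepsilon-\dots$.

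\textbf{Lipschitz bound.} For $\abs{\varepsilon},\abs{\kappa}\le\varepsilon^*$, write
\[
\phi_\varepsilon-\phi_\kappa=\big(K_\varepsilon(\phi_\varepsilon)-K_\varepsilon(\phi_\kappa)\big)+\big(K_\varepsilon(\phi_\kappa)-K_\kappa(\phi_\kappa)\big).
\]
By \eqref{Lipschitz continuity for contraction}, the first term has norm at most $\tfrac12\|\phi_\varepsilon-\phi_\kappa\|_{s,k}$. For the second term, boundedness of $(-L_0^X)^{-1}$ and Lemma \ref{Lemma: Operator estimates} give
\[
\|K_\varepsilon(\phi_\kappa)-K_\kappa(\phi_\kappa)\|_{s,k}\lesssim \|(V_\varepsilon-V_\kappa)(\psi_0)\phi_\kappa\|_{s,k}+\|\mc{R}_\varepsilon(\psi_0)-\mc{R}_\kappa(\psi_0)\|_{s,k}+\|\widetilde N_\varepsilon(\phi_\kappa)-\widetilde N_\kappa(\phi_\kappa)\|_{s,k}.
\]
The right-hand side is $\lesssim (\delta^*+1+(\delta^*)^2)\abs{\varepsilon-\kappa}$, using $u=v=\phi_\kappa$ in \eqref{Estimate for nonlinearity}. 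Absorbing the $\tfrac12$ term on the left gives $\|\phi_\varepsilon-\phi_\kappa\|_{s,k}\lesssim\abs{\varepsilon-\kappa}$, which is \eqref{Lipschitz continuity for perturbation}.
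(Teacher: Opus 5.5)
Your proposal is correct and follows the paper's proof essentially step by step. You obtain the fixed point from Proposition \ref{Proposition: Contraction}, identify $L_0^X\phi_{\varepsilon}$ with the classical differential expression via the embedding $X_s^k \hookrightarrow C^2$ (you justify this in more detail than the paper, through the distributional action of the closure), and use the same absorption bound $\norm{\phi_{\varepsilon}-\phi_{\kappa}}_{s,k} \leq 2\norm{K_{\varepsilon}(\phi_{\kappa})-K_{\kappa}(\phi_{\kappa})}_{s,k}$ combined with Lemma \ref{Lemma: Operator estimates}.
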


\begin{proof}
From Proposition \ref{Proposition: Contraction} we immediately obtain for every $\abs{\varepsilon} \leq \varepsilon^*$ a unique $\phi_{\varepsilon} \in \mc{B}_{\delta^*} \cap \mc{D}(L_0^X)$ satisfying $\phi_{\varepsilon} = K_{\varepsilon}(\phi_{\varepsilon})$. That $\phi_{\varepsilon}$ now also satisfies \eqref{Perturbation equation} pointwise follows from the fact that $L_0^X$ acts as a differential operator on $\phi_{\varepsilon}$ due to the embedding $X_s^k(\R^n) \hookrightarrow C^2(\R^n)$. 
    
To show the Lipschitz continuity we take $\abs{\varepsilon}, \abs{\kappa} \leq \varepsilon^*$ and obtain with \eqref{Lipschitz continuity for contraction}
\begin{align*}
    \norm{\phi_{\varepsilon}-\phi_{\kappa}}_{s,k} = \norm{K_{\varepsilon}(\phi_{\varepsilon}) - K_{\kappa}(\phi_{\kappa})}_{s,k} \leq \frac{1}{2} \norm{\phi_{\varepsilon}-\phi_{\kappa}}_{s,k} + \norm{K_{\varepsilon}(\phi_{\kappa})-K_{\kappa}(\phi_{\kappa})}_{s,k}
\end{align*}

and therefore $\norm{\phi_{\varepsilon}-\phi_{\kappa}}_{s,k} \leq 2 \norm{K_{\varepsilon}(\phi_{\kappa})-K_{\kappa}(\phi_{\kappa})}_{s,k}$. \eqref{Lipschitz continuity for perturbation} now follows from the following estimate which holds for all $\phi \in \mc{B}_{\delta^*}$ due to Lemma \ref{Lemma: Operator estimates}
\begin{align*}
    & \norm{K_{\varepsilon}(\phi) - K_{\kappa}(\phi)}_{s,k} \\ & \lesssim \norm{\left(V_{\varepsilon}(\psi_0)-V_{\kappa}(\psi_0)\right)\phi}_{s,k} + \norm{\mc{R}_{\varepsilon}(\psi_0) - \mc{R}_{\kappa}(\psi_0)}_{s,k} + \lVert \widetilde{N}_{\varepsilon}(\phi) - \widetilde{N}_{\kappa}(\phi) \rVert_{s,k} \lesssim \abs{\varepsilon-\kappa}. \nonumber
\end{align*}
    
\end{proof}

From this we obtain the following result about the self-similar profile. 

\begin{proposition}\label{Proposition: properties of self-similar solution}
Let $\delta^* > 0$ and $\varepsilon^* > 0$ as in Proposition \ref{Proposition: Existence of perturbation} and let $\phi_{\varepsilon} \in \mc{B}_{\delta^*}$ be the solution to \eqref{Perturbation equation} for every $\abs{\varepsilon} \leq \varepsilon^*$. Then    
\begin{align*}
    \psi_{\varepsilon} := \psi_0 + \phi_{\varepsilon} \in X_s^k(\R^n) \hookrightarrow C_r^2(\R^n)
\end{align*}
    
is a smooth classical solution to \eqref{Static equation}. Moreover, it is even and we have
\begin{align*}
    \norm{\psi_{\varepsilon}-\psi_{\kappa}}_{s,k} \lesssim \abs{\varepsilon-\kappa}
\end{align*}
    
for all $\abs{\varepsilon},\abs{\kappa} \leq \varepsilon^*$. Furthermore, for every multi-index $\alpha \in \N_0^n$,
\begin{align}\label{Decay of psi_e}
    \abs{\partial^{\alpha}\psi_{\varepsilon}(y)} \lesssim_{\alpha} \langle y \rangle^{-1-\abs{\alpha}} \quad \text{and} \quad \abs{\partial^{\alpha} \left(\Lambda \psi_{\varepsilon}\right)(y)} \lesssim_{\alpha} \langle y \rangle^{-3-\abs{\alpha}} 
\end{align}

for every $y \in \R^n$ and $|\varepsilon| \leq \varepsilon^*$. The implicit constants can hereby be chosen uniformly with respect to $\varepsilon$. In addition,
\begin{align}\label{Lipschitz continuity of psi_e}
    \abs{\partial^{\alpha}(\psi_{\varepsilon}-\psi_{\kappa})(y)} \lesssim_{\alpha} |\varepsilon-\kappa| \langle y \rangle^{-1-\abs{\alpha}} \quad \text{and} \quad \abs{\partial^{\alpha} \left(\Lambda \psi_{\varepsilon} - \Lambda \psi_{\kappa}\right)(y)} \lesssim_{\alpha} |\varepsilon-\kappa|\langle y \rangle^{-3-\abs{\alpha}} 
\end{align}
for all $y \in \R^n$ and every $|\varepsilon|, |\kappa| \leq \varepsilon^*$.
    
\end{proposition}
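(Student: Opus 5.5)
The plan has three parts. First, that $\psi_{\varepsilon}$ is a classical solution of \eqref{Static equation}, plus the Lipschitz bound in $X_s^k$. Second, an ODE analysis of the radial profile, giving smoothness and the pointwise decay \eqref{Decay of psi_e}. Third, the Lipschitz pointwise bounds \eqref{Lipschitz continuity of psi_e}, obtained by running the same ODE argument for the difference.

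\textbf{Step 1: classical solution and Lipschitz bound.} By Proposition \ref{Proposition: Existence of perturbation}, $\phi_{\varepsilon}$ solves \eqref{Perturbation equation} pointwise. Adding $0=\widetilde L\psi_0+N_0(\psi_0)$ shows that $\psi_{\varepsilon}$ solves \eqref{Static equation} pointwise. Since $\psi_0\in X_s^k$ (this follows from the decay of $\widetilde f_0$), we get $\psi_{\varepsilon}\in X_s^k\hookrightarrow C^2_r$. The bound $\|\psi_\varepsilon-\psi_\kappa\|_{s,k}=\|\phi_\varepsilon-\phi_\kappa\|_{s,k}\lesssim|\varepsilon-\kappa|$ is immediate from \eqref{Lipschitz continuity for perturbation}.

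For smoothness, write the radial profile $\widetilde\psi_\varepsilon$, which satisfies
\[
\widetilde\psi''+\Big(\tfrac{n-1}{\rho}-\tfrac{\rho}{2}\Big)\widetilde\psi'-\tfrac12\widetilde\psi+\tfrac{n-3}{\rho^3}\eta_\varepsilon(\rho\widetilde\psi)=0 .
\]
Because $\eta_\varepsilon$ is odd (recall $w_\varepsilon w_\varepsilon'$ is odd), the term $\rho^{-3}\eta_\varepsilon(\rho\widetilde\psi)=\widetilde\psi^3 G_\varepsilon(\rho^2\widetilde\psi^2)$ is a smooth function of $y$ and $\psi$ by the integral formula for $\eta_\varepsilon$. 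Elliptic regularity for $\Delta\psi=\Lambda\psi-N_\varepsilon(\psi)$ then bootstraps $C^2\Rightarrow C^\infty$, and radiality of $\psi_\varepsilon$ gives evenness of the profile.

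\textbf{Step 2: decay via ODE analysis.} This is the main obstacle, and I would follow \cite{BieDon18}. The Sobolev embedding already gives $|\psi_\varepsilon(y)|\lesssim\langle y\rangle^{-(n/2-s)}$-type bounds with $n/2-s<1$, together with uniform $C^2$ bounds. These are uniform in $\varepsilon$ since $\|\psi_\varepsilon\|_{s,k}\le\|\psi_0\|_{s,k}+\delta^*$, and $\widetilde f_\varepsilon=\rho\widetilde\psi_\varepsilon$ grows sublinearly.

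For large $\rho$, rewrite the equation for $f=\widetilde f_\varepsilon$:
\[
f''+\Big(\tfrac{n-3}{\rho}-\tfrac\rho2\Big)f'=\tfrac{n-3}{\rho^2}\,w_\varepsilon w_\varepsilon'(f).
\]
The operator on the left has fundamental system $\{1,\ \int^\rho r^{3-n}e^{r^2/4}\,dr\}$, and the second solution grows like $\rho^{2-n}e^{\rho^2/4}$, which is excluded by the polynomial bounds. Variation of constants then gives
\[
f'(\rho)=-\rho^{3-n}e^{\rho^2/4}\int_\rho^\infty r^{n-3}e^{-r^2/4}\,\tfrac{n-3}{r^2}(w_\varepsilon w_\varepsilon')(f(r))\,dr .
\]
This yields $|f'(\rho)|\lesssim\rho^{-3}$, hence that $f_\infty=\lim_{\rho\to\infty} f(\rho)$ exists, and $|f-f_\infty|\lesssim\rho^{-2}$. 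Differentiating the equation repeatedly gives $|f^{(k)}|\lesssim\rho^{-2-k}$ for $k\ge1$. Translating back, $\psi_\varepsilon=\rho^{-1}f$ gives $|\partial^\alpha\psi_\varepsilon|\lesssim\langle y\rangle^{-1-|\alpha|}$.

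The extra decay of $\Lambda\psi_\varepsilon=\tfrac12\rho^{-1}\rho f'$ comes from $\rho f'=O(\rho^{-2})$, which gives $|\partial^\alpha\Lambda\psi_\varepsilon|\lesssim\langle y\rangle^{-3-|\alpha|}$. All constants depend only on uniform bounds on $\|\psi_\varepsilon\|_{s,k}$ and on the $C^\ell$ norms of $w_\varepsilon$, so they are uniform in $|\varepsilon|\le\varepsilon^*$. On compact sets the bounds follow from the uniform $C^2$ control plus bootstrapping.

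\textbf{Step 3: Lipschitz pointwise bounds.} Apply the same representation to $f_\varepsilon-f_\kappa$. The difference of the right-hand sides is controlled by
\[
|(w_\varepsilon w_\varepsilon')(f_\varepsilon)-(w_\kappa w_\kappa')(f_\kappa)|\lesssim|\varepsilon-\kappa|+|f_\varepsilon-f_\kappa|,
\]
using the Lipschitz structure of $w_\varepsilon$ in $\varepsilon$. Seed this with the embedding estimate $\|f_\varepsilon-f_\kappa\|_{L^\infty(\text{compact})}\lesssim|\varepsilon-\kappa|$, which follows from Step 1. Iterating the integral formulas exactly as in Step 2 then gives \eqref{Lipschitz continuity of psi_e}.
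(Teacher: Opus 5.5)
Your Steps 1 and 2 follow the paper's proof. Your $f'$ is the paper's $h_\varepsilon=p_\varepsilon'$, and your variation-of-constants formula is its integrating-factor identity with $\mu(\rho)=\rho^{n-3}e^{-\rho^2/4}$. Getting smoothness from elliptic regularity instead of the $\mathcal{T}$/$\mathcal{K}$ bootstrap is a harmless variant.

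\textbf{The gap is in Step 3.} The difference formula
\[
f_\varepsilon'(\rho)-f_\kappa'(\rho)=-\frac{n-3}{\mu(\rho)}\int_\rho^\infty \mu(r)\,r^{-2}\bigl[q_\varepsilon(f_\varepsilon(r))-q_\kappa(f_\kappa(r))\bigr]\,dr, \qquad q_\varepsilon=w_\varepsilon w_\varepsilon',
\]
requires control of $f_\varepsilon-f_\kappa$ on the whole tail $[\rho,\infty)$, and a seed on compact sets gives none. You also cannot ``iterate exactly as in Step 2''. Step 2 closed only because $q_\varepsilon$ is globally bounded, whereas here the integrand contains the unknown $|f_\varepsilon-f_\kappa|$ itself. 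The only a priori tail information you have, $|f_\varepsilon|+|f_\kappa|\lesssim 1$, carries no factor $|\varepsilon-\kappa|$. As written, the step is circular.

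There are two ways to break the circle:
\begin{enumerate}
\item The paper uses absorption. Set $M(R):=\sup_{\rho\ge R}|f_\varepsilon(\rho)-f_\kappa(\rho)|$, which is finite by Step 2. The formula gives $M(R)\le C_R|\varepsilon-\kappa|+CR^{-2}\bigl(|\varepsilon-\kappa|+M(R)\bigr)$. Fixing $R$ large, independently of $\varepsilon,\kappa$, yields $M(R)\lesssim|\varepsilon-\kappa|$.
\item Alternatively, seed globally rather than on compacts. Since $X_s^k\hookrightarrow L^\infty$, Step 1 gives $|f_\varepsilon-f_\kappa|(\rho)\lesssim\rho\,|\varepsilon-\kappa|$ for all $\rho$. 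Inserting this into the formula gives $|f_\varepsilon'-f_\kappa'|(\rho)\lesssim|\varepsilon-\kappa|\rho^{-2}$ for $\rho\ge1$. This is integrable, so $|f_\varepsilon-f_\kappa|\lesssim|\varepsilon-\kappa|$ on $[1,\infty)$, and a second pass gives the rate $\rho^{-3}$.
\end{enumerate}
Only after one of these does the derivative induction go through for the difference.

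\textbf{A smaller omission.} The tail formula degenerates as $\rho\to0$, since $\mu(\rho)^{-1}\sim\rho^{3-n}$. So near the origin, the bounds \eqref{Lipschitz continuity of psi_e} for derivatives beyond what the $C^2$ embedding provides need $C^m$ bounds on compact sets for $\psi_\varepsilon-\psi_\kappa$ with a factor $|\varepsilon-\kappa|$. You use your bootstrap only for smoothness and for $\varepsilon$-uniform bounds. It must also be run on the difference equation, as the paper does in \eqref{local bounds}.
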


\begin{proof}
The fact that $\psi_{\varepsilon}$ is a classical solution to \eqref{Static equation} follows from its construction and the embedding $X_s^k(\R^n) \hookrightarrow C_r^2(\R^n)$. The Lipschitz continuity with respect to the $X_s^k$-norm follows from the above Proposition. 

To simplify (and slight abuse of) the notation, we write for the remainder of the proof $\psi_{\varepsilon}(\rho) := \widetilde{\psi}_{\varepsilon}(\rho)$ for the radial profile of $\psi_{\varepsilon}$. This radial profile then satisfies the following second order ODE
\begin{align}\label{profile equation}
    0 = \psi_{\varepsilon}''(\rho) + \left(\frac{n-1}{\rho}-\frac{\rho}{2}\right) \psi_{\varepsilon}'(\rho) - \frac{1}{2} \psi_{\varepsilon}(\rho) + \frac{n-3}{\rho^3} \eta_{\varepsilon}(\rho \, \psi_{\varepsilon}(\rho)), \quad \text{for} \quad \rho > 0.
\end{align}

Since $\eta_{\varepsilon}(0) = \eta_{\varepsilon}'(0) = \eta_{\varepsilon}''(0) = 0$ we can write
\begin{align*}
    \eta_{\varepsilon}(z) = z^3 a_{\varepsilon}(z) \qquad \text{with} \qquad a_{\varepsilon}(z) = \frac{1}{2} \int_0^1 (1-t)^2 \eta_{\varepsilon}'''(tz) \, dt.
\end{align*}
$a_{\varepsilon}$ then depends smoothly on $\varepsilon$ so that we have for every $M > 0$ and $j \in \N_0$
\begin{align}\label{estimates for a_e}
    \sup_{|\varepsilon|\leq\varepsilon^*, \, |z| \leq M} |a_{\varepsilon}^{(j)}(z)| < \infty \qquad \text{as well as} \qquad \underset{|z| \leq M}{\sup} |a_{\varepsilon}^{(j)}(z) - a_{\kappa}^{(j)}(z)| \lesssim_{M,j} |\varepsilon - \kappa|
\end{align}
for every $|\varepsilon|, |\kappa| \leq \varepsilon^*$. We now write the profile equation \eqref{profile equation} as
\begin{align}\label{rewritten profile equation}
    \psi_{\varepsilon}'' + \frac{n-1}{\rho} \psi_{\varepsilon}' = F_{\varepsilon} \qquad \text{with} \quad F_{\varepsilon}(\rho) = \frac{\rho}{2} \psi_{\varepsilon}'(\rho) + \frac{1}{2} \psi_{\varepsilon}(\rho) - (n-3) \psi_{\varepsilon}(\rho)^3 a_{\varepsilon}(\rho \, \psi_{\varepsilon}(\rho))
\end{align}
and multiplication with $\rho^{n-1}$ gives
\begin{align*}
    \left(\rho^{n-1} \psi_{\varepsilon}'(\rho)\right)' = \rho^{n-1} F_{\varepsilon}(\rho).
\end{align*}
Since $\psi_{\varepsilon}'$ is bounded by the $C_2$-embedding of $X_s^k$ we have $\lim\limits_{\rho\to0} \rho^{n-1}\psi_{\varepsilon}'(\rho) = 0$ and integrating from 0 to $\rho$ therefore gives
\begin{align*}
    \psi_{\varepsilon}'(\rho) = \rho^{1-n} \int_0^{\rho} s^{n-1} F_{\varepsilon}(s) \, ds = \rho \int_0^1 t^{n-1} F_{\varepsilon}(t\rho) \, dt =: \rho \, \mc{T} F_{\varepsilon}(\rho).
\end{align*}
Substituting this into \eqref{rewritten profile equation} gives 
\begin{align}\label{rerewritten profile equation}
    \psi_{\varepsilon}'' = \mc{K} F_{\varepsilon}, \qquad \text{where} \qquad \mc{K} f := f - (n-1) \mc{T} f.
\end{align}
From this no follows the smoothness and evenness of $\psi_{\varepsilon}$ via a bootstrapping argument.
Differentiation under the integral gives for every $j \in \N_0$
\begin{align*}
    (\mc{T}f)^{(j)}(\rho) = \int_0^1 t^{n-1+j} f^{(j)}(t\rho) \, dt.
\end{align*}
Thus, for every $R > 0$ and $j \in \N_0$, we obtain
\begin{align*}
    \norm{\mc{T}f}_{C^j([0,R])} \leq \frac{1}{n+j} \norm{f}_{C^j([0,R])} \quad \text{and therefore} \quad \norm{\mc{K}f}_{C^j([0,R])} \lesssim_j \norm{f}_{C^j([0,R])}.
\end{align*}
We are now going to prove local bounds for the profile function. That is we prove inductively for every $m \in \N_0$ and $R > 0$
\begin{align}\label{local bounds}
    \sup_{|\varepsilon| \leq \varepsilon^*} \norm{\psi_{\varepsilon}}_{C^m([0,R])} < \infty \qquad \text{and} \qquad \norm{\psi_{\varepsilon} - \psi_{\kappa}}_{C^m([0,R])} \lesssim_{m,R} |\varepsilon - \kappa|
\end{align}
for all $|\varepsilon|, |\kappa| \leq \varepsilon^*$. From $X_s^k(\R^n) \hookrightarrow C_r^2(\R^n)$ the hypothesis immediately follows for $m \leq 2$. We now take an $m \geq 2$ and assume that the estimates from \eqref{local bounds} hold. Then, from the definition of $F_{\varepsilon}$ and \eqref{estimates for a_e} we obtain
\begin{align*}
    \sup_{|\varepsilon|\leq\varepsilon^*} \norm{F_{\varepsilon}}_{C^{m-1}([0,R])} < \infty \qquad \text{as well as} \qquad \norm{F_{\varepsilon} - F_{\kappa}}_{C^{m-1}([0,R])} \lesssim_{m,R} |\varepsilon - \kappa|
\end{align*}
Using \eqref{rerewritten profile equation} and the boundedness of $\mc{K}$, we obtain
\begin{align*}
    \sup_{|\varepsilon| \leq \varepsilon^*} \norm{\psi_{\varepsilon}''}_{C^{m-1}([0,R])} < \infty \qquad \text{and} \qquad \norm{\psi_{\varepsilon}'' - \psi_{\kappa}''}_{C^{m-1}([0,R])} \lesssim_{m,R} |\varepsilon - \kappa|,
\end{align*}
so that the local bounds from \eqref{local bounds} follow via induction.

To now prove the decay estimates at infinity, we first define
\begin{align*}
    p_{\varepsilon}(\rho) := \rho \, \psi_{\varepsilon}(\rho) \quad \text{and} \quad h_{\varepsilon}(\rho) := p_{\varepsilon}'(\rho) = \psi_{\varepsilon}(\rho) + \rho \, \psi_{\varepsilon}'(\rho),
\end{align*}
so that $h_{\varepsilon}$ is the radial profile of $2\Lambda\psi_{\varepsilon}$. We also define 
\begin{align*}
    q_{\varepsilon}(\rho) := w_{\varepsilon}(\rho) w_{\varepsilon}'(\rho)
\end{align*}
and since $\eta_{\varepsilon}(z) = z - q_{\varepsilon}(z)$ we obtain with \eqref{profile equation}
\begin{align}\label{equation for h_e}
    h_{\varepsilon}' + A \, h_{\varepsilon} = b_{\varepsilon} \quad \text{with} \quad A(\rho) := \frac{n-3}{\rho} - \frac{\rho}{2} \quad \text{and} \quad b_{\varepsilon}(\rho) := \frac{n-3}{\rho^2} q_{\varepsilon}(p_{\varepsilon}(\rho)).
\end{align}
If we introduce the following integrating factor
\begin{align*}
    \mu(\rho) = \rho^{n-3} e^{-\frac{\rho^2}{4}} 
\end{align*}
it satisfies $\mu'(\rho) = A(\rho) \mu(\rho)$ and therefore
\begin{align*}
    (\mu h_{\varepsilon}(\rho))'(\rho) = (n-3) \mu(\rho) \rho^{-2} q_{\varepsilon}(p_{\varepsilon}(\rho)).
\end{align*}
Since $\psi_{\varepsilon}$ and $\psi_{\varepsilon}'$ are bounded by the $C^2$-embedding, we know that $h_{\varepsilon}$ grows at most linearly so that we obtain $\lim\limits_{\rho\to\infty} \mu(\rho) h_{\varepsilon}(\rho) = 0$. Integrating from $\rho$ to $\infty$ now gives
\begin{align*}
    h_{\varepsilon}(\rho) = - \frac{n-3}{\mu(\rho)} \int_{\rho}^{\infty} \mu(s) s^{-2} q_{\varepsilon}(p_{\varepsilon}(s)) \, ds
\end{align*}
and since $w_{\varepsilon}$ is smooth and periodic, one has for every $j \in \N_0$
\begin{align*}
    \sup_{|\varepsilon| \leq \varepsilon^*} \|q_{\varepsilon}^{(j)}\|_{L^{\infty}(\R)} < \infty.
\end{align*}
Thus, for $\rho \geq 1$, we obtain
\begin{align}\label{decay h_e}
    |h_{\varepsilon}(\rho)| \lesssim \rho^{3-n} e^{\frac{\rho^2}{4}} \int_{\rho}^{\infty} s^{n-5} e^{-\frac{s^2}{4}} \, ds \lesssim  \rho^{-3}. 
\end{align}
Since $p_{\varepsilon}' = h_{\varepsilon}$ it follows for $\rho_2 \geq \rho_1 \geq 1$
\begin{align*}
    |p_{\varepsilon}(\rho_2) - p_{\varepsilon}(\rho_1)| \leq \int_{\rho_1}^{\infty} |h_{\varepsilon}(s)| \, ds \lesssim \rho_1^{-2},  
\end{align*}
so that $\lim\limits_{\rho \to\infty} p_{\varepsilon}(\rho)$ exists. Since $p_{\varepsilon}(1) = \psi_{\varepsilon}(1)$ is uniformly bounded with respect to $\varepsilon$ we obtain with the estimate from above
\begin{align}\label{uniform bound for p_e}
    |p_{\varepsilon}(\rho)| \lesssim 1 \quad \text{and therefore} \quad |\psi_{\varepsilon}(\rho)| \lesssim \rho^{-1}.
\end{align}

We now prove
\begin{align*}
    |h_{\varepsilon}^{(m)}(\rho)| \lesssim_m \rho^{-m-3} \qquad \text{for all} \quad \rho \geq 1 
\end{align*}
via induction on $m \in \N_0$. And since the case $m = 0$ has already been proven in \eqref{decay h_e}, we can assume that $|h_{\varepsilon}^{(j)}(\rho)| \lesssim_j \rho^{-j-3}$ for $0 \leq j \leq m-1$. Since $p_{\varepsilon}' = h_{\varepsilon}$ we have 
\begin{align*}
    |p_{\varepsilon}^{(j)}(\rho)| = |h_{\varepsilon}^{(j-1)}(\rho)| \lesssim_j \rho^{-j-2} \qquad \text{for} \quad 1 \leq j \leq m.
\end{align*}
From the chain rule and the uniform boundedness of $q_{\varepsilon}$ and its derivatives we obtain
\begin{align*}
    \left|\frac{d^j}{d\rho^j}q_{\varepsilon}(p_{\varepsilon}(\rho))\right| \lesssim_j \rho^{-j-2} \qquad \text{for} \quad 1 \leq j \leq m \quad \text{and therefore} \quad |b_{\varepsilon}^{(m)}(\rho)| \lesssim \rho^{-m-2}.
\end{align*}
If we now differentiate \eqref{equation for h_e} $m$ times we get
\begin{align}\label{mth order equation for h_e}
    h_{\varepsilon}^{(m+1)} + A h_{\varepsilon}^{(m)} = b_{\varepsilon}^{(m)} - \sum_{j=1}^m \begin{pmatrix}
        m \\ j
    \end{pmatrix} A^{(j)} h_{\varepsilon}^{(m-j)} =: R_{\varepsilon,m}.
\end{align}
Now $A'(\rho) = - \frac{n-3}{\rho^2} - \frac{1}{2}$ is bounded for $\rho \geq 1$, whereas for $j \geq 2$ we have $A^{(j)}(\rho) = \mc{O}(\rho^{-j-1})$. The induction hypothesis therefore gives
\begin{align*}
    |R_{\varepsilon,m}(\rho)| \lesssim_m \rho^{-m-2}.
\end{align*}
Due to the formula of the integrating factor we can rewrite \eqref{mth order equation for h_e} as
\begin{align*}
    \left(\mu h_{\varepsilon}^{(m)}\right)' = \mu R_{\varepsilon,m}
\end{align*}
and from \eqref{equation for h_e} and its successive derivatives we obtain inductively that all of the derivatives of $h_{\varepsilon}$ have at most polynomial growth, so that we get $\mu(\rho) h_{\varepsilon}^{(m)}(\rho) \to 0$ as $\rho \to \infty$. From this it follows that
\begin{align*}
    h_{\varepsilon}^{(m)}(\rho) = -\frac{1}{\mu(\rho)} \int_{\rho}^{\infty} \mu(s) R_{\varepsilon,m}(s) \, ds
\end{align*}
and consequently
\begin{align*}
    |h_{\varepsilon}^{(m)}(\rho)| \lesssim \rho^{3-n} e^{\frac{\rho^2}{4}} \int_{\rho}^{\infty} s^{n-m-5} e^{-\frac{s^2}{4}} \, ds \lesssim \rho^{-m-3},
\end{align*}
which closes the induction. Now since
\begin{align*}
    h_{\varepsilon}^{(m)}(\rho) = \rho \psi_{\varepsilon}^{(m+1)}(\rho) + (m+1) \psi_{\varepsilon}^{(m)}(\rho),
\end{align*}
we obtain
\begin{align*}
    \psi_{\varepsilon}^{(m+1)}(\rho) = \frac{h_{\varepsilon}^{(m)}(\rho)}{\rho} - \frac{m+1}{\rho} \psi_{\varepsilon}^{(m)}(\rho).
\end{align*}
And since we have shown that $|\psi_{\varepsilon}(\rho)| \lesssim \rho^{-1}$ holds, an induction then yields
\begin{align*}
    |\psi_{\varepsilon}^{(m)}(\rho)| \lesssim \rho^{-m-1} \qquad \text{for every} \quad m \in \N_0 \quad \text{and} \quad \rho \geq 1.
\end{align*}
Now we want to prove the parameter-dependent decay estimates from \eqref{Lipschitz continuity of psi_e}. For this we define
\begin{align*}
    P_{\varepsilon,\kappa}(\rho) := p_{\varepsilon}(\rho) - p_{\kappa}(\rho) \quad \text{and} \quad H_{\varepsilon,\kappa}(\rho) := h_{\varepsilon}(\rho) - h_{\kappa}(\rho),
\end{align*}
so that we again have $P_{\varepsilon,\kappa}' = H_{\varepsilon,\kappa}$. Subtracting \eqref{equation for h_e} for $h_{\varepsilon}$ and $h_{\kappa}$ yields
\begin{align*}
    H_{\varepsilon,\kappa}' + A H_{\varepsilon,\kappa} = \frac{n-3}{\rho^2} Q_{\varepsilon,\kappa} \qquad \text{with} \quad Q_{\varepsilon,\kappa}(\rho) := q_{\varepsilon}(p_{\varepsilon}(\rho)) - q_{\kappa}(p_{\kappa}(\rho)).
\end{align*}
Since $q_{\varepsilon}$ and its derivatives depend Lipschitz continuously on $\varepsilon$, one has
\begin{align*}
    |q_{\varepsilon}(x) - q_{\kappa}(y)| \lesssim |\varepsilon-\kappa| + |x-y|
\end{align*}
for all $x,y \in \R$ and therefore $|Q_{\varepsilon,\kappa}(\rho)| \lesssim |\varepsilon-\kappa| + |P_{\varepsilon,\kappa}(\rho)|$. From the formula of the integrating factor we again obtain
\begin{align*}
    H_{\varepsilon,\kappa}(\rho) = - \frac{n-3}{\mu(\rho)} \int_{\rho}^{\infty} \mu(s) s^{-2} Q_{\varepsilon,\kappa}(s) \, ds.
\end{align*}
For now take $R > 1$, to be fixed later, and define
\begin{align*}
    M_{\varepsilon,\kappa}(R) := \sup_{\rho \geq R} |P_{\varepsilon,\kappa}(\rho)|.
\end{align*}
Due to \eqref{uniform bound for p_e} we obtain $M_{\varepsilon,\kappa}(R) < \infty$ uniformly in $\varepsilon$ and $\kappa$. Since we have for $\rho \geq R$
\begin{align*}
    |Q_{\varepsilon,\kappa}(\rho)| \lesssim |\varepsilon-\kappa| + |P_{\varepsilon,\kappa}(\rho)| \leq |\varepsilon-\kappa| + M_{\varepsilon,\kappa}(R), 
\end{align*}
we similarly obtain the following estimate as we did for $h_{\varepsilon}$ for $\rho \geq R$
\begin{align}\label{estimate for H_ek}
    |H_{\varepsilon,\kappa}(\rho)| \lesssim \rho^{3-n} e^{\frac{\rho^2}{4}} \int_{\rho}^{\infty} s^{n-5} e^{-\frac{s^2}{4}} |Q_{\varepsilon,\kappa}(s)| \, ds \lesssim \left(|\varepsilon-\kappa| + M_{\varepsilon,\kappa}(R)\right)\rho^{-3}.
\end{align}
Moreover, due to $P_{\varepsilon,\kappa}' = H_{\varepsilon,\kappa}$
\begin{align*}
    P_{\varepsilon,\kappa}(\rho) = P_{\varepsilon,\kappa}(R) + \int_R^{\rho} H_{\varepsilon,\kappa}(s)\,ds 
\end{align*}
and from the local bounds \eqref{local bounds} we obtain
\begin{align*}
    |P_{\varepsilon,\kappa}(R)| = R |\psi_{\varepsilon}(R) - \psi_{\kappa}(R)| \leq C_R |\varepsilon-\kappa|. 
\end{align*}
It follows
\begin{align*}
    M_{\varepsilon,\kappa}(R) \leq |P_{\varepsilon,\kappa}(R)| + \int_R^{\infty} |H_{\varepsilon,\kappa}(s)| \, ds \lesssim C_R |\varepsilon-\kappa| + R^{-2}\left(|\varepsilon-\kappa| + M_{\varepsilon,\kappa}(R)\right).
\end{align*}
We now choose $R>1$ sufficiently large (independently of $\varepsilon$ and $\kappa$) such that the coefficient on the right-hand side is strictly smaller than 1. Absorbing this term then gives $M_{\varepsilon,\kappa}(R) \lesssim |\varepsilon-\kappa|$, so that we obtain from \eqref{estimate for H_ek}
\begin{align*}
    |H_{\varepsilon,\kappa}(\rho)| \lesssim |\varepsilon-\kappa|\rho^{-3} \qquad \text{for} \quad \rho \geq R.
\end{align*}
The estimates for the derivatives of $H_{\varepsilon,\kappa}$ and $P_{\varepsilon,\kappa}$ now follow analogously via induction as for $h_{\varepsilon}$ and $p_{\varepsilon}$. Overall, we obtain \eqref{Lipschitz continuity of psi_e}, which finishes the proof.

\end{proof}

We are now able to prove our first main theorem.

\begin{proof}[Proof of Theorem \ref{Theorem: Blowup Solution}]

We define $\widetilde{f}_{\varepsilon}(\rho) = \rho \, \widetilde{\psi}_{\varepsilon}(\rho)$, where $\widetilde{\psi}_{\varepsilon}$ is the radial profile from the above constructed $\psi_{\varepsilon}$ solving \eqref{Perturbation equation}. From this we immediately obtain that $\widetilde{f}_{\varepsilon}$ is a smooth and odd function with $\widetilde{f}_{\varepsilon}(0) = 0$. Furthermore, the limit $L := \lim\limits_{\rho\to\infty} \widetilde{f}_{\varepsilon}(\rho)$ exists and there exists for every $k \in \N$ a constant $C_k > 0$ with
\begin{align*}
    \abs{\widetilde{f}_{\varepsilon}^{(k)}(\rho)} \leq C_{\varepsilon,k} \langle \rho \rangle^{-2-k} \qquad \text{for all} \quad \rho > 0.
\end{align*}

We now show $0 < \widetilde{f}_{\varepsilon}(\rho) < \pi$ for every $\rho > 0$. For this we first observe that $\widetilde{f}_{\varepsilon}$ solves the following profile equation
\begin{align}\label{ODE for self-similar profile}
    0 = f''(\rho) + \left(\frac{d-1}{\rho} - \frac{\rho}{2}\right) f'(\rho) - \frac{d-1}{\rho^2} w_{\varepsilon}(f(\rho)) \, w_{\varepsilon}'(f(\rho)).
\end{align}
We define
\begin{align*}
    \mc{E}(\rho) := \frac{1}{2} \rho^2 \widetilde{f}_{\varepsilon}'(\rho)^2 - \frac{d-1}{2} w_{\varepsilon}(\widetilde{f}_{\varepsilon}(\rho))^2
\end{align*}
and differentiation with respect to $\rho$ gives
\begin{align*}
    \mc{E}'(\rho) = \rho \widetilde{f}_{\varepsilon}'(\rho)^2 + \rho^2 \widetilde{f}_{\varepsilon}'(\rho) \widetilde{f}_{\varepsilon}''(\rho) - (d-1) w_{\varepsilon}(\widetilde{f}_{\varepsilon}(\rho)) w_{\varepsilon}'(\widetilde{f}_{\varepsilon}(\rho)) \widetilde{f}_{\varepsilon}'(\rho).
\end{align*}
If we now multiply the profile equation \eqref{ODE for self-similar profile} with $\rho^2 \widetilde{f}_{\varepsilon}'(\rho)$ and plug this into the above equation we obtain
\begin{align*}
    \mc{E}'(\rho) = \rho \left(\frac{\rho^2}{2} - (d-2)\right) \widetilde{f}_{\varepsilon}'(\rho)^2.
\end{align*}
If we now set $\rho_* := \sqrt{2(d-2)}$ we get $\mc{E}'(\rho) \leq 0$ for $0 \leq \rho \leq \rho_*$ and $\mc{E}'(\rho) \geq 0$ for $\rho \geq \rho_*$. 

Since $\mc{E}(0) = 0$ we therefore have $\mc{E}(\rho) \leq 0$ for all $0 \leq \rho \leq \rho_*$ and due to $\widetilde{f}_{\varepsilon}'(\rho) = \mc{O}(\rho^{-3})$ we get
\begin{align*}
    \lim_{\rho\to\infty} \mc{E}(\rho) = - \frac{d-1}{2} w_{\varepsilon}(L)^2 \leq 0.
\end{align*}
Since $\mc{E}$ is nondecreasing on $[\rho_*,\infty)$ and has a nonpositive limit, it follows $\mc{E}(\rho) \leq 0$ for $\rho \geq \rho_*$ and therefore
\begin{align}\label{energy nonpositive}
    \mc{E}(\rho) \leq 0 \quad \text{for all} \quad \rho \geq 0.
\end{align}
We now define
\begin{align*}
    \rho_0 := \inf\{\rho > 0 : ~ \widetilde{f}_{\varepsilon}(\rho) \notin (0,\pi)\} > 0.
\end{align*}
Due to the continuity of $\widetilde{f}_{\varepsilon}$ we must have $\widetilde{f}_{\varepsilon}(\rho_0) \in \{0,\pi\}$ and therefore
\begin{align*}
    \mc{E}(\rho_0) = \frac{1}{2} \rho_0^2 \widetilde{f}_{\varepsilon}'(\rho_0)^2 \geq 0.
\end{align*}
But due to \eqref{energy nonpositive} we must have $\mc{E}(\rho_0) = 0$ and therefore $\widetilde{f}_{\varepsilon}'(\rho_0) = 0$. Thus, at $\rho_0$, we have either
\begin{align*}
    \widetilde{f}_{\varepsilon}(\rho_0) = 0, \quad \widetilde{f}_{\varepsilon}'(\rho_0) = 0 \quad \text{or} \quad \widetilde{f}_{\varepsilon}(\rho_0) = \pi, \quad \widetilde{f}_{\varepsilon}'(\rho_0) = 0.
\end{align*}
But since the profile equation \eqref{ODE for self-similar profile} is a regular second-order ODE at $\rho_0 > 0$ the profile $\widetilde{f}_{\varepsilon}$ would have to equal $k\pi$ with $k \in \{0,1\}$ on the entire existence interval by uniqueness, which would be a contradiction to $\widetilde{f}_{\varepsilon}'(0) \neq 0$. If we now set
\begin{align*}
    u_{\varepsilon}^T(t,r) = \widetilde{f}_{\varepsilon}\left(\frac{r}{\sqrt{T-t}}\right) 
\end{align*}

we get for the gradient
\begin{align*}
    \restr{\partial_r u_{\varepsilon}^T(t,r)}{r=0} = \left(T-t\right)^{-1/2} \widetilde{f}_{\varepsilon}'(0) = \left(T-t\right)^{-1/2} \left(\widetilde{\psi}_0(0) + \widetilde{\phi}_{\varepsilon}(0)\right),
\end{align*}

so that the Theorem follows if we initially choose $\delta > 0$ so small so that we have $\abs{\widetilde{\phi}_{\varepsilon}(0)} < \widetilde{\psi}_0(0)$, which can be done due to the embedding $X_s^k(\R^n) \hookrightarrow L^{\infty}(\R^n)$. 
    
\end{proof}

\section{Stability analysis}\label{Section: Stability Analysis}

In this section we proceed to investigate the stability of the blowup solution $u_{\varepsilon}^T \in C^{\infty}([0,T)\times [0,\infty))$ constructed in Theorem \ref{Theorem: Blowup Solution}. We proceed similar to \cite{DonSchWit25} Section 4. For this, we consider $\widetilde{v}(t,r) = r^{-1} \widetilde{u}(t,r)$ and study the following Cauchy problem on $[0,\infty) \times \R^n$
\begin{align}\label{Cauchy problem stability analysis}
    \begin{cases}
    \partial_t v - \Delta_x v = \frac{n-3}{\abs{x}^3}\left(\abs{x}v - w_{\varepsilon}(\abs{x}v)w_{\varepsilon}'(\abs{x}v)\right) \\
    v(0,x) = v_{\varepsilon}^1(0,x) + \varphi_0(x). 
    \end{cases}
\end{align}
         
Here, $v_{\varepsilon}^1$ denotes the corresponding blowup solution $v_{\varepsilon}^T$ with blowup time $T=1$, i.e.
\begin{align*}
    v_{\varepsilon}^T(t,x) = \frac{1}{\sqrt{T-t}} \psi_{\varepsilon}\left(\frac{x}{\sqrt{T-t}} \right), \quad \psi_{\varepsilon}(y) = \abs{y}^{-1} \widetilde{f}_{\varepsilon}(|y|)
\end{align*}

and $\varphi_0$ is a radial real-valued Schwartz function with for now small $X_s^k$-norm.
    
Again, introducing similarity coordinates
\begin{align*}
    \tau = \log\left(\frac{T}{T-t}\right) \quad \text{and} \quad y = \frac{x}{\sqrt{T-t}}
\end{align*}

\eqref{Cauchy problem stability analysis} transforms into \eqref{Evolution equation} and the blowup solution $v_{\varepsilon}^T$ corresponds to the static solution $\psi_{\varepsilon}$ of Eq.~\eqref{Evolution equation}. The Cauchy problem \eqref{Cauchy problem stability analysis} can therefore be reformulated as
\begin{align}\label{Cauchy problem for stability analysis operator formulation}
    \begin{cases}
        \partial_{\tau}\psi(\tau) = \widetilde{L} \psi(\tau)+ N_{\varepsilon}(\psi(\tau)), \\
        \psi(0) = \psi_{\varepsilon}^T + \varphi_0^T,
     \end{cases}
\end{align}

where we use the convention that a superscript $T$ denotes the rescaling $f^T := \sqrt{T} f(\sqrt{T}\cdot)$ . $\widetilde{L}$ still denotes the heat operator in similarity coordinates and the nonlinearity $N_{\varepsilon}$ is the one from \eqref{Nonlinearity}.

To analyze the dynamics near $\psi_{\varepsilon}$ we decompose the solution as a perturbation of the static profile by writing $\psi(\tau) = \psi_{\varepsilon} + \phi_{\varepsilon}(\tau)$ and substitute this ansatz into the equation. This yields an evolution equation for the perturbation $\phi_{\varepsilon}$
\begin{align*}
    \partial_{\tau}\phi_{\varepsilon}(\tau)=\widetilde{L}\phi_{\varepsilon}(\tau) + N_{\varepsilon}(\psi_{\varepsilon} + \phi_{\varepsilon}(\tau)) - N_{\varepsilon}(\psi_{\varepsilon}).
\end{align*}

Expanding the nonlinearity around $\psi_{\varepsilon}$ we obtain the central evolution equation of this section
\begin{align}\label{Cauchy problem for perturbation stability analysis}
    \begin{cases}
        \partial_{\tau}\phi_{\varepsilon}(\tau) = \widetilde{L}_{\varepsilon} \phi_{\varepsilon}(\tau) +  \widehat{N}_{\varepsilon}(\phi_{\varepsilon}(\tau)),\\
        \phi_{\varepsilon}(0) = \varphi_0^T + \psi_{\varepsilon}^T - \psi_{\varepsilon}. 
    \end{cases}
\end{align}

Here, the linearized operator $\widetilde{L}_{\varepsilon} := \widetilde{L} + L_{\varepsilon}'$ consists of the heat operator $\widetilde{L}$ in similarity coordinates from \eqref{Evolution equation} and a potential term $L_{\varepsilon}'$ given by 
\begin{align*}
    L_{\varepsilon}' u = V_{\varepsilon}(\psi_{\varepsilon}) \, u \quad \text{ for } ~ V_{\varepsilon}(\psi_{\varepsilon})(y) = \frac{n-3}{\abs{y}^2}\eta_{\varepsilon}'(\abs{y}\psi_{\varepsilon}(y)).
\end{align*}

The nonlinear remainder $\widehat{N}_{\varepsilon}$ consists of the higher-order terms and is given by
\begin{align}\label{Definition: Nonlinearity in stability analysis}
    \widehat{N}_{\varepsilon}(u)(y)= \frac{n-3}{\abs{y}^3}\left(\eta_{\varepsilon}(\abs{y}(\psi_{\varepsilon}(y)+u(y)))-\eta_{\varepsilon}(\abs{y}\psi_{\varepsilon}(y))-\eta_{\varepsilon}'(\abs{y}\psi_{\varepsilon}(y))\abs{y}u(y)\right).
\end{align}

We note that the dependence on the blowup time $T$ appears only in the initial data. Furthermore, in the unperturbed case $\varepsilon = 0$, the operator $\widetilde{L}_{\varepsilon}$ coincides with the one from Proposition \ref{Proposition: Properties of L_0}. 

\subsection{The linearized operator for small parameters}

In this section we study the linear operator $\widetilde{L}_{\varepsilon}$ and its spectral properties. We begin by showing that $\widetilde{L}_{\varepsilon}$ is closable in $\mc{H}$ and that its closure $(L_{\varepsilon}, \mc{D}(L_{\varepsilon}))$ generates a strongly continuous semigroup $(S_{\varepsilon}(\tau))_{\tau\geq0}$ on $\mc{H}$. If we restrict this operator onto $X_s^k$ it likewise generates a strongly continuous semigroup on this space, which is obtained from the restriction of $(S_{\varepsilon}(\tau))_{\tau\geq0}$ onto $X_s^k$. 

We then turn to the spectral analysis of the restricted operator $L_{\varepsilon}^X$. We first show that $\lambda = 1$ is a simple eigenvalue, see Lemma \ref{Lemma: Eigenfunction}, corresponding to the expected instability associated to the time translation symmetry. We then show in Proposition \ref{Proposition: Stability of spectrum of Le} that this is in fact the only unstable spectral point of $L_{\varepsilon}^X$. A key ingredient in this analysis is the Lipschitz-dependency of $L_{\varepsilon}^X$ with respect to the parameter $\varepsilon$ proved in Proposition \ref{Proposition: Lipschitz continuity of full linear part}, which relies on the Lipschitz-continuity of the self-similar solution $\psi_{\varepsilon}$.

\begin{proposition}
For all $\abs{\varepsilon} \leq \varepsilon^*$ the operator $(\widetilde{L}_{\varepsilon}, \mc{D}(\widetilde{L}_{\varepsilon}))$ is closable in $\mc{H}$ and its closure $L_{\varepsilon}: \mc{D}(L_{\varepsilon}) \subset \mc{H} \to \mc{H}$ satisfies $\mc{D}(L_{\varepsilon}) = \mc{D}(L)$, is self-adjoint, has compact resolvent and generates a strongly continuous semigroup $(S_{\varepsilon}(\tau))_{\tau\geq0}$ of bounded operators on $\mc{H}$.
\end{proposition}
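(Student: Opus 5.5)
The plan is to treat $L_{\varepsilon}$ as a bounded symmetric perturbation of the free operator $L$ from Proposition \ref{Proposition: Properties of L_0}. We write
\begin{align*}
\widetilde{L}_{\varepsilon} = \widetilde{L} + V_{\varepsilon}(\psi_{\varepsilon}), \qquad V_{\varepsilon}(\psi_{\varepsilon})(y) = \frac{n-3}{\abs{y}^2}\eta_{\varepsilon}'(\abs{y}\psi_{\varepsilon}(y)).
\end{align*}
The first step is to show that this potential is a bounded, real-valued, radial function on $\R^n$. Near the origin we use \eqref{Second auxiliary equation}, which gives
\begin{align*}
V_{\varepsilon}(\psi_{\varepsilon})(y) = (n-3)\,\psi_{\varepsilon}(y)^2\int_0^1\int_0^1 x\,\eta_{\varepsilon}'''(\abs{y}\psi_{\varepsilon}(y)xy')\,dy'\,dx .
\end{align*}
This is bounded because $\psi_{\varepsilon}$ is bounded by Proposition \ref{Proposition: properties of self-similar solution} and $\eta_{\varepsilon}'''$ is uniformly bounded, being built from the periodic function $w_{\varepsilon}$. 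For $\abs{y}\geq 1$ we instead use $\abs{\eta_{\varepsilon}'(z)} = \abs{1 - (w_{\varepsilon}w_{\varepsilon}')'(z)} \lesssim 1$, which gives $\abs{V_{\varepsilon}(\psi_{\varepsilon})(y)} \lesssim \abs{y}^{-2}$. Hence $V_{\varepsilon}(\psi_{\varepsilon}) \in L^{\infty}(\R^n)$, uniformly in $\abs{\varepsilon}\leq\varepsilon^*$. Multiplication by it is therefore a bounded self-adjoint operator $L_{\varepsilon}'$ on $\mc{H}$, since the potential is real and radial and so preserves radiality.

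The second step is closability and the domain. Since $\widetilde{L}_{\varepsilon} = \widetilde{L} + L_{\varepsilon}'$ on $C_{c,r}^{\infty}(\R^n)$ with $L_{\varepsilon}'$ bounded, the operator $\widetilde{L}_{\varepsilon}$ is closable. Its closure is $L_{\varepsilon} = L + L_{\varepsilon}'$ with $\mc{D}(L_{\varepsilon}) = \mc{D}(L)$: graph norms differ by at most $\norm{L_{\varepsilon}'}$ times the $\mc{H}$-norm, so closures of the two operators on the same core coincide up to the bounded term. Self-adjointness then follows from the Kato–Rellich theorem, which in the bounded case is elementary: a self-adjoint operator plus a bounded symmetric operator is self-adjoint on the same domain.

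The third step is the compact resolvent. For $\lambda$ in the resolvent sets of both operators we have the identity
\begin{align*}
R_{L_{\varepsilon}}(\lambda) = R_L(\lambda)\big(I - L_{\varepsilon}' R_L(\lambda)\big)^{-1}.
\end{align*}
Here we take $\lambda$ real and large, so that $\norm{L_{\varepsilon}'R_L(\lambda)}<1$; this uses that $L$ is self-adjoint and bounded above, since it generates a semigroup. The right-hand side is the compact operator $R_L(\lambda)$ composed with a bounded one, so it is compact, and compactness of one resolvent gives compactness of all of them.

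Finally, generation follows from the bounded perturbation theorem (Engel–Nagel III.1.3). Alternatively, $L_{\varepsilon}$ is self-adjoint and bounded above by $\sup\sigma(L) + \norm{V_{\varepsilon}(\psi_{\varepsilon})}_{\infty}$, and one can apply the functional calculus. The only genuinely nontrivial point is the $L^{\infty}$ bound for the potential near $y=0$, where the factor $\abs{y}^{-2}$ must be cancelled via \eqref{Second auxiliary equation}. The rest is standard perturbation theory. One would also like the bound to hold uniformly in $\varepsilon$, which matters for later uniform estimates, and this comes from the uniform bounds in \eqref{Decay of psi_e}.
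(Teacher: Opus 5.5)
Your proposal is correct and follows essentially the same route as the paper: show that the multiplication operator $L_{\varepsilon}'$ is bounded on $\mc{H}$ using \eqref{Second auxiliary equation}, then invoke the bounded perturbation theorem, Kato--Rellich, and preservation of compact resolvents under bounded perturbations. The differences are cosmetic. You bound $V_{\varepsilon}(\psi_{\varepsilon})$ in $L^{\infty}$ directly and check the compact-resolvent step by hand via $R_{L_{\varepsilon}}(\lambda) = R_L(\lambda)(I - L_{\varepsilon}'R_L(\lambda))^{-1}$. The paper instead derives symbol-type bounds, places the potential in $X_s^k$ via Lemma 2.1 of \cite{DonSchWit25}, uses $X_s^k \hookrightarrow L^{\infty}$, and cites Engel--Nagel III.1.12. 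That detour pays off in the next proposition, where $V_{\varepsilon}(\psi_{\varepsilon}) \in X_s^k$ is needed for the algebra-property bound.
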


\begin{proof}
We only have to show that $L_{\varepsilon}': \mc{H} \to \mc{H}$ is bounded. The bounded perturbation theorem, see \cite{EngNag00} Theorem III.1.3, then implies that $\widetilde{L}_{\varepsilon}$ is closable in $\mc{H}$, that its closure has domain $\mc{D}(L_{\varepsilon}) = \mc{D}(L)$ and that it generates a strongly continuous semigroup on $\mc{H}$. Moreover, the compactness of the resolvent is preserved under bounded perturbations as well, see \cite{EngNag00} Proposition III.1.12, and the self-adjointness then follows from Kato-Rellich's Theorem.

To show the boundedness of $L_{\varepsilon}'$ in $\mc{H}$ we write the potential in the following way using \eqref{Second auxiliary equation}
\begin{align*}
    V_{\varepsilon}(\psi_{\varepsilon})(\xi) = (n-3)\psi_{\varepsilon}(\xi)^2\int_0^1x\int_0^1\eta_{\varepsilon}'''(\abs{\xi}\psi_{\varepsilon}(\xi)xy)dy\,dx.
\end{align*}

Therefore, $V_{\varepsilon}(\psi_{\varepsilon})$ behaves in leading order like $\psi_{\varepsilon}^2$ and therefore satisfies
\begin{align*}
    \abs{\partial^{\alpha} V_{\varepsilon}(\psi_{\varepsilon})(y)} \lesssim_{\alpha} \langle y \rangle^{-2-\abs{\alpha}}
\end{align*}

for every $\alpha \in \N_0^n$ and $y \in \R^n$. With Lemma 2.1 from \cite{DonSchWit25} we infer that $V_{\varepsilon}(\psi_{\varepsilon})$ belongs to $X_s^k(\R^n)$ and is therefore bounded, which then shows the boundedness of $L_{\varepsilon}'$ in $\mc{H}$.

\end{proof}

We now use the embedding $X_s^k(\R^n) \hookrightarrow \mc{H}$ to show that these operators also generate strongly continuous semigroups in $X_s^k(\R^n)$, which will be  the semigroups in $\mc{H}$ restricted onto $X_s^k(\R^n)$.

\begin{proposition}
For all $\abs{\varepsilon} \leq \varepsilon^*$ the restrictions $L^X := \restr{L}{X_s^k(\R^n)}$ and $L_{\varepsilon}^X := \restr{L_{\varepsilon}}{X_s^k(\R^n)}$ with domains $\mc{D}(L_{\varepsilon}^X) = \mc{D}(L^X) = \{f \in \mc{D}(L) \cap X_s^k(\R^n): Lf \in X_s^k(\R^n)\}$ generate strongly continuous semigroups $(S^X(\tau))_{\tau\geq0}$ and $(S_{\varepsilon}^X(\tau))_{\tau\geq0}$ on $X_s^k(\R^n)$ respectively and they are given by the restrictions of $(S(\tau))_{\tau\geq0}$ and $(S_{\varepsilon}(\tau))_{\tau\geq0}$ onto $X_s^k(\R^n)$ respectively.
\end{proposition}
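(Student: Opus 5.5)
The claim for $L^X$ is already part of Proposition \ref{Proposition: Properties of L_0}, so only the statement for $L_{\varepsilon}^X$ needs proof. I would treat $L_{\varepsilon}^X$ as a bounded perturbation of $L^X$ on $X_s^k$. The key input is that multiplication by the potential $V_{\varepsilon}(\psi_{\varepsilon})$ defines a bounded operator $X_s^k \to X_s^k$. I would show this exactly as in Lemma \ref{Lemma: Operator estimates}, with $\psi_0$ replaced by $\psi_{\varepsilon}$. By \eqref{Second auxiliary equation},
\begin{align*}
V_{\varepsilon}(\psi_{\varepsilon})u = (n-3)\,\psi_{\varepsilon}\,\psi_{\varepsilon}\,u\int_0^1\int_0^1 x\,\eta_{\varepsilon}'''(xy\abs{\cdot}\psi_{\varepsilon})\,dy\,dx ,
\end{align*}
which has the trilinear form $u_1u_2u_3F(\abs{\cdot}v)$ with $u_1=u_2=v=\psi_{\varepsilon}$ and $u_3=u$. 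Since $\psi_{\varepsilon}\in X_s^k$ by Proposition \ref{Proposition: properties of self-similar solution}, the Schauder estimates of Lemma \ref{Lemma: Schauder} bound the $\dot H^{s-1}\cap\dot H^k$ norm. I would apply them with $F_\kappa \equiv 0$, or equivalently use the second estimate with $v_2=0$ plus the trivial term. Together with $X^k_{s-1}\hookrightarrow X^k_s$ as used before, this gives $\|V_{\varepsilon}(\psi_{\varepsilon})u\|_{s,k}\lesssim\|u\|_{s,k}$.

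With boundedness in hand, the bounded perturbation theorem (\cite{EngNag00}, Theorem III.1.3) shows that $L^X + V_{\varepsilon}(\psi_{\varepsilon})$, with domain $\mc{D}(L^X)$, generates a strongly continuous semigroup $T_{\varepsilon}(\tau)$ on $X_s^k$. It remains to identify this operator with the part of $L_{\varepsilon}$ in $X_s^k$ and its semigroup with $\restr{S_{\varepsilon}(\tau)}{X_s^k}$. For the operator, I would check that the part of $L_{\varepsilon}=L+L'_{\varepsilon}$ in $X_s^k$ is
\begin{align*}
\{f\in\mc{D}(L)\cap X_s^k : Lf+V_{\varepsilon}f\in X_s^k\}.
\end{align*}
Because $V_{\varepsilon}f\in X_s^k$ for all $f\in X_s^k$, this set equals $\{f\in \mc{D}(L)\cap X_s^k: Lf\in X_s^k\}=\mc{D}(L^X)$, which is the claimed domain identity.

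For the semigroups, I would use that both are given by Dyson--Phillips series. On $\mc{H}$,
\begin{align*}
S_{\varepsilon}(\tau)=\sum_j S_j(\tau), \qquad S_{j+1}(\tau)=\int_0^\tau S(\tau-\sigma)L'_{\varepsilon}S_j(\sigma)\,d\sigma ,
\end{align*}
and the analogous series built from $S^X$ and the bounded operator $V_{\varepsilon}$ on $X_s^k$ gives $T_{\varepsilon}$. Since $S^X=\restr{S}{X_s^k}$ and $L'_{\varepsilon}$ maps $X_s^k$ into itself, induction shows that each term of the $X_s^k$ series is the restriction of the corresponding term in $\mc{H}$. The Bochner integrals agree because the continuous embedding $X_s^k\hookrightarrow\mc{H}$ commutes with integration. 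Both series converge in operator norm, on their respective spaces, so $T_{\varepsilon}(\tau)=\restr{S_{\varepsilon}(\tau)}{X_s^k}$.

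The main obstacle is the boundedness of the potential on $X_s^k$. The Schauder lemma is stated for $\dot H^s\cap\dot H^k$ with $k>n$, which our choice $k=n+1$ satisfies, so I expect it to go through. The only care needed is that $\eta_{\varepsilon}'''$ satisfies the hypotheses of Lemma \ref{Lemma: Schauder}, which the auxiliary lemma provides.
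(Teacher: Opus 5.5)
Your argument is correct. Its skeleton matches the paper's: a bounded perturbation of $L^X$ on $X_s^k$, with the statement for $L^X$ taken from Proposition \ref{Proposition: Properties of L_0}. You justify the two substantive steps differently, however.

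For the boundedness of $L'_{\varepsilon}$ on $X_s^k$, the paper uses the algebra property of $X_s^k$ together with $V_{\varepsilon}(\psi_{\varepsilon})\in X_s^k$. It obtains the latter in the preceding proposition from the symbol-type bounds $\abs{\partial^{\alpha}V_{\varepsilon}(\psi_{\varepsilon})(y)}\lesssim\langle y\rangle^{-2-\abs{\alpha}}$, so ultimately from the ODE decay estimates \eqref{Decay of psi_e} and Lemma 2.1 of \cite{DonSchWit25}. You instead feed the trilinear representation directly into Lemma \ref{Lemma: Schauder}. This needs only $\psi_{\varepsilon}\in X_s^k$, and it is the same computation the paper later uses for the Lipschitz dependence of $L'_{\varepsilon}$ on $\varepsilon$. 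To make your step ``$F_{\kappa}\equiv 0$'' literal, fix $\varepsilon$ and apply the lemma to the even family $\lambda\mapsto\lambda\,\eta_{\varepsilon}'''$ with $\abs{\lambda}\leq 1$. This family satisfies the lemma's hypotheses because all derivatives of $\eta_{\varepsilon}'''$ are bounded. The factor $xy$ inside the argument is handled by taking $v=xy\,\psi_{\varepsilon}$ and using Minkowski's inequality.

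For identifying the perturbed semigroup with $\restr{S_{\varepsilon}(\tau)}{X_s^k}$, the paper just asserts it after citing the bounded perturbation theorem. That theorem on its own only produces \emph{some} semigroup generated by $L^X+L'_{\varepsilon}$. Your term-by-term comparison of the Dyson--Phillips series through the continuous embedding $X_s^k\hookrightarrow\mc{H}$ closes this gap. Together with your check that the part of $L_{\varepsilon}$ in $X_s^k$ has domain $\mc{D}(L^X)$, it supplies precisely the step the paper leaves implicit.
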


\begin{proof}
From \cite{AngKisSch26} we infer that $(S(\tau))_{\tau\geq0}$ is also a strongly continuous semigroup on $X_s^k(\R^n)$. Since this space is invariant under $(S(\tau))_{\tau\geq0}$ we know from \cite{EngNag00}, II.2.3 that $L^X = \restr{L}{X_s^k(\R^n)}$ with domain $\mc{D}(L^X) = \{f \in \mc{D}(L) \cap X_s^k(\R^n): Lf \in X_s^k(\R^n)\}$ generates a strongly continuous semigroup $(S^X(\tau))_{\tau\geq0}$ which is given by the restriction of $(S(\tau))_{\tau\geq0}$ onto $X_s^k(\R^n)$.

Due to the Algebra property of $X_s^k(\R^n)$ we have
\begin{align*}
    \norm{L_{\varepsilon}'f}_{s,k} \lesssim \norm{V_{\varepsilon}(\psi_{\varepsilon})}_{s,k} \norm{f}_{s,k}
\end{align*}

for all $f \in X_s^k(\R^n)$ and therefore we have by the bounded perturbation theorem that $L_{\varepsilon}^X = \restr{L_{\varepsilon}}{X_s^k(\R^n)}$ has domain $\mc{D}(L_{\varepsilon}^X) = \mc{D}(L^X)$ and also generates a strongly continuous semigroup $(S_{\varepsilon}^X(\tau))_{\tau\geq0}$, which is given by the restriction of $(S_{\varepsilon}(\tau))_{\tau\geq0}$ onto $X_s^k(\R^n)$.
    
\end{proof}

\begin{proposition}
For every $\varepsilon \in \R$ with $\abs{\varepsilon} \leq \varepsilon^*$ the operator $L_{\varepsilon}' : X_s^k \to X_s^k$ depends Lipschitz continuous on the parameter $\varepsilon$ in the sense that 
\begin{align*}
    \norm{L_{\varepsilon}'u - L_{\kappa}'u}_{s,k} \lesssim \abs{\varepsilon-\kappa} \norm{u}_{s,k}
\end{align*}

holds for all $u \in X_s^k$ and all $\abs{\varepsilon},\abs{\kappa}\leq \varepsilon^*$.
\end{proposition}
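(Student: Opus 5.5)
Since $L_{\varepsilon}'u - L_{\kappa}'u = \left(V_{\varepsilon}(\psi_{\varepsilon}) - V_{\kappa}(\psi_{\kappa})\right)u$, the algebra property of $X_s^k$ (already used in the previous proposition) reduces the claim to the bound
\begin{align*}
    \norm{V_{\varepsilon}(\psi_{\varepsilon}) - V_{\kappa}(\psi_{\kappa})}_{s,k} \lesssim \abs{\varepsilon-\kappa}
\end{align*}
for all $\abs{\varepsilon},\abs{\kappa}\leq\varepsilon^*$. We prove this by splitting the difference into two pieces:
\begin{align*}
    V_{\varepsilon}(\psi_{\varepsilon}) - V_{\kappa}(\psi_{\kappa}) = \left(V_{\varepsilon}(\psi_{\varepsilon}) - V_{\varepsilon}(\psi_{\kappa})\right) + \left(V_{\varepsilon}(\psi_{\kappa}) - V_{\kappa}(\psi_{\kappa})\right).
\end{align*}
The first piece varies only the profile, and the second varies only the parameter in $\eta$.

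By \eqref{Second auxiliary equation}, for every $\psi$ we have
\begin{align*}
    V_{\varepsilon}(\psi)(y) = (n-3)\psi(y)^2\int_0^1\int_0^1 x\,\eta_{\varepsilon}'''(\abs{y}\psi(y)xy')\,dy'\,dx.
\end{align*}
The second piece is therefore $\psi_{\kappa}^2$ times an integral of $\eta_{\varepsilon}'''-\eta_{\kappa}'''$ evaluated at $\abs{\cdot}\,xy'\psi_{\kappa}$. For fixed $x,y'$ we apply the first estimate of Lemma \ref{Lemma: Schauder} with $F_{\varepsilon}(z):=x\,\eta_{\varepsilon}'''(xy'z)$. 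This family is even, since $\eta_{\varepsilon}$ is odd, and it satisfies the hypotheses of the lemma uniformly in $x,y'$ by the Lipschitz bound on $\eta^{(\ell+3)}$. We take $u_1=u_2=\psi_{\kappa}$, a third factor $u_3$, and $v=\psi_{\kappa}$, and integrate the resulting bound in $x,y'$ by Minkowski's inequality. We use $\sup_{\abs{\kappa}\leq\varepsilon^*}\norm{\psi_{\kappa}}_{s,k}\lesssim 1$, which follows from $\psi_\kappa=\psi_0+\phi_\kappa$ with $\phi_\kappa\in\mc{B}_{\delta^*}$, together with the embedding $\dot H^{s-1}\cap\dot H^k\hookrightarrow X_s^k$. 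This gives $\lesssim\abs{\varepsilon-\kappa}$.

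For the first piece, write $\psi^2 F(\abs{\cdot}\psi)$ and use a telescoping decomposition:
\begin{align*}
    (\psi_{\varepsilon}^2-\psi_{\kappa}^2)\,F_{\varepsilon}(\abs{\cdot}\psi_{\varepsilon}) + \psi_{\kappa}^2\left(F_{\varepsilon}(\abs{\cdot}\psi_{\varepsilon})-F_{\varepsilon}(\abs{\cdot}\psi_{\kappa})\right).
\end{align*}
The second summand is handled directly by the second estimate of Lemma \ref{Lemma: Schauder} together with $\norm{\psi_{\varepsilon}-\psi_{\kappa}}_{s,k}\lesssim\abs{\varepsilon-\kappa}$ from Proposition \ref{Proposition: properties of self-similar solution}.

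We expect the first summand to be the main obstacle. Lemma \ref{Lemma: Schauder} is formulated for differences of $F$, not for a plain product $u_1u_2u_3F(\abs{\cdot}v)$. There are two ways around this.

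The first is to apply the second estimate with $v_1=\psi_{\varepsilon}$ and $v_2=0$. This writes $F_{\varepsilon}(\abs{\cdot}\psi_{\varepsilon}) = F_{\varepsilon}(0) + \big(F_{\varepsilon}(\abs{\cdot}\psi_{\varepsilon})-F_{\varepsilon}(0)\big)$. The constant part contributes $F_\varepsilon(0)(\psi_{\varepsilon}+\psi_{\kappa})(\psi_{\varepsilon}-\psi_{\kappa})$, which the algebra property bounds by $\lesssim\abs{\varepsilon-\kappa}$. The remaining part is bounded by the second Schauder estimate with $u_1=\psi_{\varepsilon}+\psi_{\kappa}$, $u_2=\psi_{\varepsilon}-\psi_{\kappa}$ and $u_3$ the test factor.

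If that route is problematic, the alternative is to bypass Lemma \ref{Lemma: Schauder} for this term and argue via the pointwise symbol bounds. Proposition \ref{Proposition: properties of self-similar solution} gives \eqref{Lipschitz continuity of psi_e}, from which each summand satisfies $\abs{\partial^\alpha(\cdot)}\lesssim_\alpha\abs{\varepsilon-\kappa}\langle y\rangle^{-2-\abs{\alpha}}$. Lemma 2.1 of \cite{DonSchWit25} then places the whole difference $V_\varepsilon(\psi_\varepsilon)-V_\kappa(\psi_\kappa)$ in $X_s^k$ with norm $\lesssim\abs{\varepsilon-\kappa}$, because that lemma is linear in the symbol constants. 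This second route is cleaner, and I would actually try it first. Either way, the algebra property then yields the claimed estimate for $\norm{L_{\varepsilon}'u-L_{\kappa}'u}_{s,k}$.
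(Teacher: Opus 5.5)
Both of your routes are correct. The first is the paper's argument. The paper writes $V_{\varepsilon}(\psi_{\varepsilon})$ via \eqref{Second auxiliary equation} and invokes Lemma \ref{Lemma: Schauder} together with $\norm{\psi_{\varepsilon}-\psi_{\kappa}}_{s,k}\lesssim\abs{\varepsilon-\kappa}$. Your splitting into a parameter difference and a profile difference fills in exactly the details the paper leaves implicit, including the treatment of the undifferenced factor $F_{\varepsilon}(\abs{\cdot}\psi_{\varepsilon})$ by the second estimate at $v_2=0$ plus the constant $F_{\varepsilon}(0)$. Note that with $u_3=u$ this route gives the operator bound directly, so your opening algebra-property reduction is only needed for the second route.

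Your second route differs from the paper's proof of this statement, although the paper uses the same mechanism elsewhere: to show $V_{\varepsilon}(\psi_{\varepsilon})\in X_s^k$, and for the Lipschitz bound on $g_{\varepsilon}$ in Lemma \ref{Lemma: Eigenfunction}. The mechanism is pointwise symbol bounds with constant $\abs{\varepsilon-\kappa}$, then the quantitative Lemma 2.1 of \cite{DonSchWit25} with $\gamma=2$ (admissible since $s>\frac{n}{2}-1$), then the algebra property.

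The two routes trade off as follows.
\begin{itemize}
\item The Schauder route is independent of the ODE analysis. It needs only the $X_s^k$-Lipschitz bound, which comes for free from the contraction in Proposition \ref{Proposition: Existence of perturbation}.
\item The symbol route avoids the multilinear estimates and the bookkeeping for the undifferenced factor. It also yields the slightly stronger fact that the potential difference itself is $O(\abs{\varepsilon-\kappa})$ in $X_s^k$.
\item However, the symbol route rests on the pointwise bounds \eqref{Lipschitz continuity of psi_e}. These require the full ODE analysis of Proposition \ref{Proposition: properties of self-similar solution}, namely the local bounds plus the absorption argument at infinity.
\end{itemize}

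In the symbol route you should also derive the bounds near $y=0$ from the evenness of $\eta_{\varepsilon}'''$, which makes $F_{\varepsilon}(\abs{y}\psi)$ a smooth function of $\abs{y}^2\psi^2$, together with the local bounds \eqref{local bounds}. This is needed because $\abs{y}\psi_{\kappa}$ is not itself smooth at the origin.
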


\begin{proof}
Again we write
\begin{align*}
    V_{\varepsilon}(\psi_{\varepsilon})(\xi) = (n-3)\psi_{\varepsilon}(\xi)^2\int_0^1x\int_0^1\eta_{\varepsilon}'''(\abs{\xi}\psi_{\varepsilon}(\xi)xy)dy\,dx
\end{align*}

so that the Lipschitz continuity follows from the Schauder estimate from Lemma \ref{Lemma: Schauder} and the Lipschitz continuity of $\psi_{\varepsilon}$.

\end{proof}

From this we obtain the Lipschitz dependency of the whole linear operator.

\begin{proposition}\label{Proposition: Lipschitz continuity of full linear part}
For every $\varepsilon \in \R$ with $\abs{\varepsilon}\leq \varepsilon^*$ the operator $L_{\varepsilon}^X$ satisfies 
\begin{align*}
    \norm{L_{\varepsilon}^X u - L_{\kappa}^X u}_{s,k} \lesssim \abs{\varepsilon-\kappa} \norm{u}_{s,k}
\end{align*}

for all $u \in \mc{D}(L^X) = \mc{D}(L_{\varepsilon}^X) = \mc{D}(L_{\kappa}^X)$ and all $\abs{\varepsilon},\abs{\kappa}\leq \varepsilon^*$.
\end{proposition}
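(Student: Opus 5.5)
The plan is to reduce the statement to the Lipschitz estimate for the potential term proved in the preceding proposition. By the two previous propositions, the domains agree:
\begin{align*}
    \mc{D}(L_{\varepsilon}^X) = \mc{D}(L^X) = \mc{D}(L_{\kappa}^X).
\end{align*}
Moreover, on this common domain both restricted operators act as $L^X$ plus a bounded multiplication operator. Concretely, for $u \in \mc{D}(L^X)$ we have $L_{\varepsilon}^X u = L^X u + L_{\varepsilon}' u$. This holds because $L_{\varepsilon}$ is the closure of $\widetilde{L} + L_{\varepsilon}'$ with $L_{\varepsilon}'$ bounded on $\mc{H}$, so $L_{\varepsilon} = L + L_{\varepsilon}'$ on $\mc{D}(L)$ by the bounded perturbation theorem. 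Restricting to $X_s^k$ keeps the same formula, since $L_{\varepsilon}'$ maps $X_s^k$ into itself by the algebra property.

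Subtracting the two identities, the unbounded free part $L^X u$ cancels exactly. We are left with
\begin{align*}
    L_{\varepsilon}^X u - L_{\kappa}^X u = L_{\varepsilon}' u - L_{\kappa}' u = \left(V_{\varepsilon}(\psi_{\varepsilon}) - V_{\kappa}(\psi_{\kappa})\right) u .
\end{align*}
Applying the preceding proposition to this difference gives
\begin{align*}
    \norm{L_{\varepsilon}^X u - L_{\kappa}^X u}_{s,k} \lesssim \abs{\varepsilon-\kappa}\norm{u}_{s,k}
\end{align*}
for all $u \in X_s^k$, and in particular for all $u \in \mc{D}(L^X)$. The implicit constant is uniform in $\abs{\varepsilon},\abs{\kappa} \leq \varepsilon^*$.

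The only point needing care is the justification that $L_{\varepsilon}^X = L^X + L_{\varepsilon}'$ holds on the common domain, rather than merely that the domains coincide. This follows from the explicit description of the domain in the previous proposition, together with the fact that a bounded perturbation of a generator has the same domain and acts additively on it. Beyond that step, the argument is immediate.
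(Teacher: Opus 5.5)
Your proposal is correct and follows the paper's argument. The paper gives no separate proof: it notes only that the statement follows from the preceding Lipschitz estimate for $L_{\varepsilon}'$, which is exactly your reduction via $L_{\varepsilon}^X u - L_{\kappa}^X u = L_{\varepsilon}'u - L_{\kappa}'u$ on the common domain $\mc{D}(L^X)$. Your justification that $L_{\varepsilon}^X = L^X + L_{\varepsilon}'$ on this domain (the closure of a bounded perturbation, plus invariance of $X_s^k$ under multiplication by $V_{\varepsilon}(\psi_{\varepsilon})$) supplies the one step the paper leaves unstated.
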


As in the unperturbed case $\varepsilon = 0$ the operator $L_{\varepsilon}^X$ has an unstable mode at $\lambda = 1$ generated by the time translation symmetry. More precisely, we have the following result.

\begin{lemma}\label{Lemma: Eigenfunction}
For every $\abs{\varepsilon} \leq \varepsilon^*$ we have 
\begin{align*}
    L_{\varepsilon}^X \, g_{\varepsilon} = g_{\varepsilon},
\end{align*}
    
where $g_{\varepsilon} := \Lambda \psi_{\varepsilon} \in C^{\infty}_r(\R^n) \cap \mc{D}(L_{\varepsilon}^X)$. Additionally, the eigenfunction $g_{\varepsilon}$ depends Lipschitz continuously on the parameter $\varepsilon$ in the sense that
\begin{align*}
    \norm{g_{\varepsilon} - g_{\kappa}}_{s,k} \lesssim |\varepsilon - \kappa|
\end{align*} 
holds for all $|\varepsilon|, |\kappa| \leq \varepsilon^*$.
    
\end{lemma}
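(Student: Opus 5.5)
The proof has three parts: derive the eigenvalue equation from the scaling symmetry, show that $g_\varepsilon$ lies in the domain, and prove the Lipschitz bound.

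\textbf{Eigenvalue equation.} The family $\psi_\varepsilon^T = \sqrt{T}\,\psi_\varepsilon(\sqrt{T}\,\cdot)$ consists of static solutions of \eqref{Static equation} for every $T>0$. This follows from the scaling invariance of \eqref{Semilinear heat equation in n dimensions}: the self-similar solution $v_\varepsilon^{T'}$ with a different blowup time, written in similarity coordinates adapted to $T$, gives a $\tau$-dependent solution. More precisely, the function
\[
\Psi(\tau) = \sqrt{\tfrac{T e^{-\tau}}{T' - T + T e^{-\tau}}}\;\psi_\varepsilon\!\Big(\sqrt{\tfrac{T e^{-\tau}}{T'-T+Te^{-\tau}}}\,\cdot\Big)
\]
solves \eqref{Evolution equation}. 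Differentiating in $T'$ at $T'=T$ and then setting $\tau=0$ shows that $\partial_{T'}\Psi$ solves the linearized equation $\partial_\tau \phi = \widetilde L_\varepsilon \phi$ and is proportional to $e^{\tau}\Lambda\psi_\varepsilon$. Hence $\widetilde L_\varepsilon \Lambda\psi_\varepsilon = \Lambda\psi_\varepsilon$ holds pointwise.

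The same identity can be checked directly, which avoids the bookkeeping. Apply the generator of dilations $D := \tfrac12(y\cdot\nabla + 1) = \Lambda$ to $0=\Delta\psi_\varepsilon - \Lambda\psi_\varepsilon + N_\varepsilon(\psi_\varepsilon)$ and use the commutator identities
\[
[\Delta,\Lambda]=\Delta, \qquad [\Lambda,\Lambda]=0, \qquad \Lambda N_\varepsilon(\psi) = N_\varepsilon'(\psi)\Lambda\psi + N_\varepsilon(\psi).
\]
The last identity holds because $N_\varepsilon(\lambda\psi(\lambda\cdot))(y)=\lambda^3 N_\varepsilon(\psi)(\lambda y)$, which one checks from $\eta_\varepsilon(|y|\psi)/|y|^3$. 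Combining these gives
\[
0=\Delta\Lambda\psi_\varepsilon + \Delta\psi_\varepsilon - \Lambda\Lambda\psi_\varepsilon + V_\varepsilon(\psi_\varepsilon)\Lambda\psi_\varepsilon + N_\varepsilon(\psi_\varepsilon).
\]
Substituting the profile equation $\Delta\psi_\varepsilon + N_\varepsilon(\psi_\varepsilon)=\Lambda\psi_\varepsilon$ yields $\widetilde L_\varepsilon g_\varepsilon = g_\varepsilon$ classically. All of these manipulations are justified because $\psi_\varepsilon$ is smooth by Proposition \ref{Proposition: properties of self-similar solution}.

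\textbf{Domain.} Smoothness and radiality of $g_\varepsilon$ come from Proposition \ref{Proposition: properties of self-similar solution}, which also gives the symbol-type decay $|\partial^\alpha g_\varepsilon(y)|\lesssim \langle y\rangle^{-3-|\alpha|}$ in \eqref{Decay of psi_e}. By Lemma 2.1 of \cite{DonSchWit25} this decay implies $g_\varepsilon\in X_s^k$; the relevant threshold is decay better than $\langle y\rangle^{-s}$ with $s<n/2$.

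To place $g_\varepsilon$ in $\mc D(L_\varepsilon^X)$, approximate it by $\chi(\cdot/R)g_\varepsilon\in C^\infty_{c,r}$, where $\chi$ is a smooth cutoff. Then $\widetilde L\,(\chi_R g_\varepsilon)\to \widetilde L g_\varepsilon$ in $\mc H$ by the Gaussian weight, so closedness gives $g_\varepsilon\in\mc D(L)$. Moreover $Lg_\varepsilon = g_\varepsilon - V_\varepsilon(\psi_\varepsilon)g_\varepsilon$ lies in $X_s^k$ by the algebra property. This shows $g_\varepsilon\in\mc D(L^X)=\mc D(L^X_\varepsilon)$, and $L^X_\varepsilon g_\varepsilon=g_\varepsilon$ follows.

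\textbf{Lipschitz bound (main obstacle).} The Lipschitz estimate in $X_s^k$ does not follow directly from $\|\psi_\varepsilon-\psi_\kappa\|_{s,k}\lesssim|\varepsilon-\kappa|$, because $\Lambda$ loses a derivative and a weight. I would instead use \eqref{Lipschitz continuity of psi_e}, which gives $|\partial^\alpha(g_\varepsilon-g_\kappa)(y)|\lesssim|\varepsilon-\kappa|\langle y\rangle^{-3-|\alpha|}$. I would then invoke the quantitative form of Lemma 2.1 in \cite{DonSchWit25}: the $X_s^k$-norm of a radial function is bounded by the constants in such symbol-type bounds for $|\alpha|\le k$. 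This can be proven by interpolating between $\dot H^{\lfloor s\rfloor}$ and $\dot H^{\lceil s\rceil}$, whose norms are controlled by weighted $L^2$ integrals that converge since $3+j>n/2-j$ type conditions hold for $n\le 8$. This yields $\|g_\varepsilon-g_\kappa\|_{s,k}\lesssim|\varepsilon-\kappa|$.
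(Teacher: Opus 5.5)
Your proposal follows the paper's proof in all three parts: the eigen-identity, a cutoff argument giving $g_\varepsilon\in\mc{D}(L_\varepsilon^X)$, and the Lipschitz bound from \eqref{Lipschitz continuity of psi_e} combined with the quantitative form of Lemma 2.1 of \cite{DonSchWit25}. Your time-translation derivation of $\widetilde{L}_\varepsilon g_\varepsilon = g_\varepsilon$ is correct and supplies the detail the paper calls immediate.

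Your ``direct'' dilation check, however, contains two sign errors.

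\begin{itemize}
\item From $[\Delta,\Lambda]=\Delta$ one gets $\Lambda\Delta\psi_\varepsilon = \Delta\Lambda\psi_\varepsilon - \Delta\psi_\varepsilon$, not $+\Delta\psi_\varepsilon$.
\item Differentiating $N_\varepsilon(\lambda\psi(\lambda\cdot)) = \lambda^3 N_\varepsilon(\psi)(\lambda\cdot)$ at $\lambda=1$ gives $\Lambda N_\varepsilon(\psi) = N_\varepsilon'(\psi)\Lambda\psi - N_\varepsilon(\psi)$. You can test this with $N(\psi)=\psi^3$.
\end{itemize}

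The correct identity is therefore
\[
0 = \widetilde{L}_\varepsilon g_\varepsilon - \bigl(\Delta\psi_\varepsilon + N_\varepsilon(\psi_\varepsilon)\bigr) = \widetilde{L}_\varepsilon g_\varepsilon - g_\varepsilon .
\]
With the signs you wrote, substituting the profile equation gives $\widetilde{L}_\varepsilon g_\varepsilon = -g_\varepsilon$. So your displayed line does not yield the conclusion you state. Either fix the signs or rely on the time-translation argument alone.

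There is also a smaller imprecision in the interpolation sketch. For $|\partial^\beta f|\lesssim A\langle y\rangle^{-3-j}$ with $|\beta|=j$, the norm $\|\partial^\beta f\|_{L^2}$ is finite iff $3+j>n/2$, not $3+j>n/2-j$. The correct condition does hold for $j=\lfloor s\rfloor,\lceil s\rceil$ when $5\le n\le 8$, so the sketch still goes through.
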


\begin{proof}
From the definition we immediately get
\begin{align*}
    \widetilde{L}_{\varepsilon} g_{\varepsilon} = \Delta g_{\varepsilon} - \Lambda g_{\varepsilon} + \frac{n-3}{\abs{y}^2} \eta_{\varepsilon}'(\abs{y} \psi_{\varepsilon}(y)) g_{\varepsilon} = g_{\varepsilon}.
\end{align*}

Due to the decay of $\Lambda \psi_{\varepsilon}$, see \eqref{Decay of psi_e}, we get
\begin{align*}
    \abs{\partial^{\alpha}g_{\varepsilon}(y)} \lesssim \langle y \rangle^{-3-\abs{\alpha}}
\end{align*}

for every $\alpha \in \N_0^n$ and $y \in \R^n$. We therefore have $g_{\varepsilon} \in X_s^k$. That $g_{\varepsilon}$ actually belongs to the domain of $L_{\varepsilon}$ can be proven via a cutoff argument. We therefore obtain $g_{\varepsilon} \in \mc{D}(L_{\varepsilon}^X)$ with $L_{\varepsilon}^X \, g_{\varepsilon} = g_{\varepsilon}$.

The Lipschitz continuity follows from \eqref{Lipschitz continuity of psi_e} and Lemma 2.1 from \cite{DonSchWit25}, where we note that one actually obtains a quantitive estimate from this Lemma in the sense that if $f \in C^{\infty}(\R^n)$ satisfies estimates of the form
\begin{align*}
    |\partial^{\beta}f(x)| \lesssim A \langle x \rangle^{- \gamma - |\beta|}
\end{align*}
for some constant $A > 0$, we get $f \in \dot{H}^s(\R^n)$ for $s> \frac{n}{2} - \gamma$ with
\begin{align*}
    \norm{f}_{\dot{H}^s(\R^n)} \lesssim A.
\end{align*}

\end{proof}

We now use perturbative methods to exclude any other unstable spectral points of $L_{\varepsilon}^X$.   

\begin{proposition}\label{Proposition: Stability of spectrum of Le}
For any fixed $0 < \omega_0 < \widetilde{\omega}$, where $\widetilde{\omega}$ is the constant from Proposition \ref{Proposition: Properties of L_0}, there exists an $\varepsilon^{**}>0$ such that for every $\varepsilon \in \R$ with $\abs{\varepsilon} \leq \varepsilon^{**}$
\begin{align*}
    \sigma(L_{\varepsilon}^X) \subset \{ \lambda \in \C : \Re \lambda <  -\omega_0 \} \cup \{1\}.
\end{align*}

Moreover, $\lambda = 1$ is a simple eigenvalue of $L_{\varepsilon}^X$ with eigenspace spanned by $g_{\varepsilon}$. Furthermore, the associated Riesz projection
\begin{align}\label{Riesz projection}
    P_{\varepsilon} := \frac{1}{2\pi i} \int_{\partial B_{1\slash2}(1)} R_{L_{\varepsilon}^X}(\lambda) \, d\lambda
\end{align}

satisfies $\ran(P_{\varepsilon}) = \ker(I - L_{\varepsilon}^X) = \langle g_{\varepsilon} \rangle$. 
\end{proposition}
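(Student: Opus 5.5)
The plan is a perturbation argument around $L_0^X$, using the Lipschitz bound of Proposition \ref{Proposition: Lipschitz continuity of full linear part}. Write $L_\varepsilon^X = L_0^X + B_\varepsilon$ with $B_\varepsilon := L_\varepsilon' - L_0'$. By that proposition (with $\kappa=0$) this is a bounded operator on $X_s^k$ with $\norm{B_\varepsilon}\le C\abs{\varepsilon}$. For $\lambda\in\rho(L_0^X)$ we use the identity
\begin{align*}
\lambda - L_\varepsilon^X = \bigl(I - B_\varepsilon R_{L_0^X}(\lambda)\bigr)(\lambda - L_0^X).
\end{align*}
It shows that $\lambda\in\rho(L_\varepsilon^X)$ whenever $C\abs{\varepsilon}\,\norm{R_{L_0^X}(\lambda)}<1$, and then a Neumann series gives $R_{L_\varepsilon^X}(\lambda)=R_{L_0^X}(\lambda)\sum_{j\ge0}(B_\varepsilon R_{L_0^X}(\lambda))^j$. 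Everything therefore reduces to a uniform bound on $R_{L_0^X}(\lambda)$ over the region $\Omega:=\{\Re\lambda\ge-\omega_0\}\setminus B_{1/2}(1)$.

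\emph{Uniform resolvent bound on $\Omega$.} This is the main obstacle, since $\Omega$ is unbounded and $L_0^X$ is not self-adjoint on $X_s^k$. I would use the spectral decomposition from Proposition \ref{Proposition: Properties of L_0}:
\begin{align*}
R_{L_0^X}(\lambda)=R_{L_0^X}(\lambda)P_0+R_{L_0^X}(\lambda)(I-P_0).
\end{align*}
\begin{itemize}
\item On $\ran P_0=\langle g_0\rangle$ we have $R_{L_0^X}(\lambda)P_0=(\lambda-1)^{-1}P_0$, and $\abs{\lambda-1}\ge1/2$ on $\Omega$, so this part is bounded by $2\norm{P_0}$.
\item On the stable part, the estimate \eqref{Decay on stable subspace}, applied with some $\omega_1\in(\omega_0,\widetilde\omega)$, lets us write the resolvent as a Laplace transform,
\begin{align*}
R_{L_0^X}(\lambda)(I-P_0)=\int_0^\infty e^{-\lambda\tau}S_0^X(\tau)(I-P_0)\,d\tau,
\end{align*}
which is bounded by $(\Re\lambda+\omega_1)^{-1}\le(\omega_1-\omega_0)^{-1}$ for $\Re\lambda\ge-\omega_0$.
\end{itemize}
Hence $\sup_{\lambda\in\Omega}\norm{R_{L_0^X}(\lambda)}=:M<\infty$. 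Choosing $\varepsilon^{**}$ with $CM\varepsilon^{**}\le1/2$ then gives $\Omega\subset\rho(L_\varepsilon^X)$ for all $\abs{\varepsilon}\le\varepsilon^{**}$, together with $\norm{R_{L_\varepsilon^X}(\lambda)}\le2M$ on $\Omega$. In particular the circle $\partial B_{1/2}(1)$ lies in $\rho(L_\varepsilon^X)$, so $P_\varepsilon$ is well defined.

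\emph{The disc $B_{1/2}(1)$.} The second resolvent identity gives
\begin{align*}
R_{L_\varepsilon^X}(\lambda)-R_{L_0^X}(\lambda)=R_{L_\varepsilon^X}(\lambda)B_\varepsilon R_{L_0^X}(\lambda),
\end{align*}
hence $\norm{P_\varepsilon-P_0}\lesssim\abs{\varepsilon}$ uniformly on the contour. Shrinking $\varepsilon^{**}$ so that $\norm{P_\varepsilon-P_0}<1$, Kato's lemma (\cite{Kat95}, Lemma 4.10) yields $\dim\ran P_\varepsilon=\dim\ran P_0=1$. By Lemma \ref{Lemma: Eigenfunction}, $g_\varepsilon\in\ker(I-L_\varepsilon^X)$ and $g_\varepsilon\neq0$ (it is close to $g_0$), so $g_\varepsilon\in\ran P_\varepsilon$ and therefore $\ran P_\varepsilon=\langle g_\varepsilon\rangle$. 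The part of $L_\varepsilon^X$ in $\ran P_\varepsilon$ is multiplication by $1$, so $\sigma(L_\varepsilon^X)\cap B_{1/2}(1)=\{1\}$. The algebraic multiplicity equals $\dim\ran P_\varepsilon=1$, which gives simplicity and $\ker(I-L_\varepsilon^X)=\langle g_\varepsilon\rangle$.

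Combining the two regions, $\sigma(L_\varepsilon^X)\subset\{\Re\lambda<-\omega_0\}\cup\{1\}$.
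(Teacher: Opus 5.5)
Your proposal is correct and follows the paper's proof essentially step by step. The shared steps are the factorization $\lambda - L_\varepsilon^X = (I - (L_\varepsilon^X - L_0^X)R_{L_0^X}(\lambda))(\lambda - L_0^X)$ with a Neumann series, the uniform bound on $R_{L_0^X}(\lambda)$ obtained by splitting along $P_0$ (a Laplace-transform bound on $\ran(I-P_0)$, as the paper gets from Engel--Nagel, and $(\lambda-1)^{-1}P_0$ on the unstable part), and Lipschitz dependence of $P_\varepsilon$ together with Kato's Lemma 4.10 to conclude $\ran P_\varepsilon = \langle g_\varepsilon\rangle$. The only cosmetic correction is that your stable-part bound must carry the constant from \eqref{Decay on stable subspace} (and $\norm{I-P_0}$), i.e.\ it is $\lesssim (\Re\lambda+\omega_1)^{-1}$, which does not affect the argument.
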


\begin{proof}
We first show that
\begin{align*}
    \sigma(L_{\varepsilon}^X) \cap \{\lambda \in \C : \Re \lambda \geq - \omega_0\} \subset B_{1\slash2}(1),
\end{align*}

so that any spectral point in the right half-plane $\{\lambda \in \C : \Re \lambda \geq - \omega_0\}$ must lie in a compact region around the isolated eigenvalue $\lambda = 1$.
Moreover, there exists a constant $C_{\omega_0}>0$ such that the resolvent satisfies 
\begin{align}\label{Resolvent estimate outside ball}
    \norm{R_{L_{\varepsilon}^X}(\lambda)} \leq C_{\omega_0}
\end{align}

for all $\lambda \in \closure{\mathbb{H}_{-\omega_0}}\, \backslash B_{1\slash2}(1)$ and all sufficiently small $\abs{\varepsilon}$.

To prove this we take $\lambda \in \closure{\mathbb{H}_{-\omega_0}}\, \backslash B_{1\slash2}(1)$ and $\abs{\varepsilon}\leq\varepsilon^*$. Since $\lambda$ then belongs to the resolvent set of $L_0^X$ (due to Proposition \ref{Proposition: Properties of L_0}) we obtain the following identity
\begin{align}\label{Identity for Neumann argument}
    \lambda - L_{\varepsilon}^X = \left(I-(L_{\varepsilon}^X-L_0^X)R_{L_0^X}(\lambda)\right)(\lambda-L_0^X).
\end{align}

Consequently, invertibility of $\lambda - L_{\varepsilon}^X$ is equivalent to the invertibility of $I-(L_{\varepsilon}^X-L_0^X)R_{L_0^X}(\lambda)$, which can be proven via a Neumann-series argument. For this, we decompose the resolvent into its stable and unstable components via the spectral projection $P_0$ from Proposition \ref{Proposition: Properties of L_0}
\begin{align}\label{Splitting of resolvent}
    R_{L_0^X}(\lambda) f = R_{L_0^X}(\lambda) (I - P_0) f + R_{L_0^X}(\lambda) P_0 f.
\end{align}

On the stable subspace, $\ran(I - P_0)$, the semigroup $\left(S_0^X(\tau)\right)_{\tau\geq0}$ decays exponentially, see \eqref{Decay on stable subspace}, so that we obtain for every $0 < \omega < \widetilde{\omega}$ 
\begin{align*}
    \norm{S_0^X(\tau)\left(I - P_0\right)u}_{s,k} \lesssim e^{- \omega \,\tau}\norm{\left(I - P_0\right)u}_{s,k}.
\end{align*}

Since the restriction of $\left(S_0^X(\tau)\right)_{\tau \geq 0}$ to $\ran(I - P_0)$ coincides with the semigroup generated by the restriction of $L_0^X$ to the stable subspace, we can apply \cite{EngNag00}, p.55, Theorem 1.10. This yields for every $0 < \omega < \widetilde{\omega}$ the existence of a constant $M_{\omega} \geq 1$ such that
\begin{align}\label{Resolvent estimate on stable subspace}
    \norm{R_{L_0^X}(\lambda)(I-P_0)} \leq \frac{M_{\omega}}{\Re \lambda + \omega}
\end{align}

for all $\lambda \in \mathbb{H}_{- \omega}$. By fixing $\omega = \frac{\omega_0 + \widetilde{\omega}}{2}$ in \eqref{Resolvent estimate on stable subspace} it follows that there exists a constant $C_{\omega_0} > 0$ such that 
\begin{align*}
    \norm{R_{L_0^X}(\lambda)(I-P_0)} \leq C_{\omega_0}
\end{align*}

for all $\lambda \in \closure{\mathbb{H}_{-\omega_0}}$. For the second term in Eq.~\eqref{Splitting of resolvent} we exploit the fact that $\lambda = 1$ is a simple eigenvalue with eigenspace spanned by $g_0$. Therefore, there exists a unique $h_0 \in X_s^k$ with
\begin{align*}
    R_{L_0^X}(\lambda) P_0 f = (\lambda-1)^{-1} \langle h_0, f \rangle_{s,k} \, g_0.
\end{align*}

for every $f \in X_s^k$. Combining \eqref{Splitting of resolvent} with the previous bounds yields a constant $C_{\omega_0}>0$ with
\begin{align*}
    \norm{R_{L_0^X}(\lambda)} \leq  C_{\omega_0}
\end{align*}

for every $\lambda \in \closure{\mathbb{H}_{-\omega_0}}\, \backslash B_{1\slash2}(1)$. From the Lipschitz-continuity of $L_{\varepsilon}^X$, see Proposition \ref{Proposition: Lipschitz continuity of full linear part}, we obtain the bound
\begin{align*}
    \norm{(L_{\varepsilon}^X-L_0^X)R_{L_0^X}(\lambda)} \lesssim |\varepsilon|
\end{align*}

for every $\lambda \in \closure{\mathbb{H}_{-\omega_0}}\, \backslash B_{1\slash2}(1)$ and every $\abs{\varepsilon}\leq\varepsilon^*$. If we now choose $\varepsilon^{**} \leq \varepsilon^*$ sufficiently small so that
\begin{align*}
    \norm{(L_{\varepsilon}^X-L_0^X)R_{L_0^X}(\lambda)} < 1
\end{align*}

the identity \eqref{Identity for Neumann argument} implies $\lambda \in \rho(L_{\varepsilon}^X)$. Moreover, we obtain the resolvent representation
\begin{align*}
    R_{L_{\varepsilon}^X}(\lambda) =  R_{L_0^X}(\lambda) \left(I-(L_{\varepsilon}^X-L_0^X)R_{L_0^X}(\lambda)\right)^{-1}
\end{align*}

from which \eqref{Resolvent estimate outside ball} follows.

To exclude any other unstable spectral points in the compact region $\overline{B_{1\slash2}(1)}$ we first notice that we have previously shown that $\partial B_{1\slash 2}(1)$ belongs to the resolvent set of $L_{\varepsilon}^X$. In particular, the Riesz projection $P_{\varepsilon}$ from \eqref{Riesz projection} is well-defined. Moreover, combining the resolvent identity with the Lipschitz-continuity of $L_{\varepsilon}^X$ and the uniform resolvent bounds on $\partial B_{1\slash 2}(1)$ we obtain for every $\lambda \in \partial B_{1\slash 2}(1)$
\begin{align*}
    \norm{R_{L_{\varepsilon}^X}(\lambda)-R_{L_{\kappa}^X}(\lambda)} \leq \norm{R_{L_{\varepsilon}^X}(\lambda)} \norm{L_{\varepsilon}^X-L_{\kappa}^X} \norm{R_{L_{\kappa}^X}(\lambda)} \lesssim \abs{\varepsilon-\kappa}.
\end{align*}

It follows that the resolvent $R_{L_{\varepsilon}^X}$ depends Lipschitz-continuously on the parameter $\varepsilon$ and hence, the same holds for the Riesz projection $P_{\varepsilon}$. By \cite{Kat95}, p.34, Lemma 4.10 we can conclude for every $\abs{\varepsilon} \leq \varepsilon^{**}$,
\begin{align*}
    \dim \ran (P_{\varepsilon}) = \dim \ran (P_0) = 1.
\end{align*}

Since we already have $\langle g_{\varepsilon} \rangle \subset \ker(I - L_{\varepsilon}^X) \subset \ran (P_{\varepsilon})$, it follows that all of these spaces coincide. Consequently, the only spectral point of $L_{\varepsilon}^X$ in $B_{1\slash2}(1)$ is the simple eigenvalue $\lambda = 1$.

\end{proof}

With these results we can now also prove the exponential decay/growth of the semigroup on the stable/unstable subspace, respectively.  

\begin{proposition}\label{Projection properties}
Fix an arbitrary $ 0 < \omega_0 < \widetilde{\omega}$, where $\widetilde{\omega}$ is the constant from Proposition \ref{Proposition: Properties of L_0}. Furthermore, let $\abs{\varepsilon} \leq \varepsilon^{**}$, where $\varepsilon^{**}$ is the constant from Proposition \ref{Proposition: Stability of spectrum of Le}. Then the projection $P_{\varepsilon}$ commutes with the semigroup $S_{\varepsilon}^X$. In particular, we have 
\begin{align}\label{Exponential growth on unstable subspace}
    P_{\varepsilon}\, S_{\varepsilon}^X(\tau) = S_{\varepsilon}^X(\tau) P_{\varepsilon} = e^{\tau}\,P_{\varepsilon} 
\end{align}

for all $\tau \geq 0$. For the stable subspace we have
\begin{align}\label{Exponential decay on stable subspace}
    \norm{S_{\varepsilon}^X(\tau) (I-P_{\varepsilon}) u}_{s,k} \lesssim e^{-\omega_0 \, \tau} \norm{(I-P_{\varepsilon})u}_{s,k}
\end{align}

as well as
\begin{align}\label{Lipschitz property of semigroup on stable subspace}
    \norm{[S_{\varepsilon}^X(\tau) (I-P_{\varepsilon}) - S_{\kappa}^X(\tau) (I-P_{\kappa})]u}_{s,k} \lesssim e^{-\omega_0 \, \tau} \abs{\varepsilon-\kappa} \norm{u}_{s,k} 
\end{align}

for all $u\in X_s^k, \tau\geq 0$ and $ \abs{\varepsilon},\abs{\kappa}\leq \varepsilon^{**}$.
\end{proposition}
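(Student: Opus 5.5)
The plan is to treat the three claims in order, using standard semigroup theory for the first two and a Duhamel-type comparison with explicit spectral control for the third.

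\textbf{Commutation and the unstable part.} The Riesz projection $P_{\varepsilon}$ commutes with the resolvent $R_{L_{\varepsilon}^X}(\lambda)$ for every $\lambda\in\rho(L_{\varepsilon}^X)$. Hence it commutes with the semigroup, for instance through the exponential formula $S_{\varepsilon}^X(\tau)=\lim_{m\to\infty}(I-\tfrac{\tau}{m}L_{\varepsilon}^X)^{-m}$. By Proposition \ref{Proposition: Stability of spectrum of Le}, $\ran P_{\varepsilon}=\langle g_{\varepsilon}\rangle$ and $L_{\varepsilon}^X g_{\varepsilon}=g_{\varepsilon}$, so the semigroup restricted to this one-dimensional invariant subspace is $e^{\tau}$. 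This gives \eqref{Exponential growth on unstable subspace}.

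\textbf{Decay on the stable subspace.} Denote by $L_{\varepsilon,s}$ the part of $L_{\varepsilon}^X$ in $\ran(I-P_{\varepsilon})$. Its spectrum is $\sigma(L_{\varepsilon}^X)\setminus\{1\}\subset\{\Re\lambda<-\omega_0\}$. In the half plane $\closure{\mathbb H_{-\omega_0}}$ its resolvent is uniformly bounded:
\begin{itemize}
\item outside $B_{1/2}(1)$ this follows from \eqref{Resolvent estimate outside ball};
\item inside $B_{1/2}(1)$ the reduced resolvent is holomorphic, so the maximum principle bounds it by its values on $\partial B_{1/2}(1)$.
\end{itemize}
Since $X_s^k$ is a Hilbert space, the Gearhart--Prüss theorem then gives \eqref{Exponential decay on stable subspace}. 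The rate is at least $\omega_0'$ for any $\omega_0'<\omega_0$, so one applies the argument with $\omega_0$ slightly enlarged; this is allowed because $\omega_0<\widetilde\omega$ was arbitrary, at the cost of possibly shrinking $\varepsilon^{**}$. An alternative to Gearhart--Prüss is to run a Duhamel perturbation argument directly from the $\varepsilon=0$ decay \eqref{Decay on stable subspace}. The key point in either route is that the implicit constant must be uniform in $\varepsilon$; this follows from the uniformity of \eqref{Resolvent estimate outside ball} and of the resolvent bounds on $\partial B_{1/2}(1)$.

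\textbf{Lipschitz estimate.} Write $Q_{\varepsilon}=I-P_{\varepsilon}$. We already know $\|P_{\varepsilon}-P_{\kappa}\|\lesssim|\varepsilon-\kappa|$. Decompose
\[
S_{\varepsilon}^X Q_{\varepsilon}-S_{\kappa}^X Q_{\kappa}
= S_{\varepsilon}^X Q_{\varepsilon}(Q_{\varepsilon}-Q_{\kappa})
+ S_{\varepsilon}^X Q_{\varepsilon}Q_{\kappa}-S_{\kappa}^X Q_{\kappa}Q_{\kappa}.
\]
The first term is $\lesssim e^{-\omega_0\tau}|\varepsilon-\kappa|$ by the decay estimate. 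For the second term use Duhamel on $\mc D(L^X)$:
\[
S_{\varepsilon}^X(\tau)-S_{\kappa}^X(\tau)=\int_0^\tau S_{\varepsilon}^X(\tau-\sigma)(L_{\varepsilon}'-L_{\kappa}')S_{\kappa}^X(\sigma)\,d\sigma .
\]
Applied to $Q_{\kappa}u$, the factor $S_{\kappa}^X(\sigma)Q_{\kappa}u$ decays like $e^{-\omega_0\sigma}$. Then insert $I=Q_{\varepsilon}+P_{\varepsilon}$ in front of $(L_{\varepsilon}'-L_{\kappa}')$:
\begin{itemize}
\item The $Q_{\varepsilon}$ part gives $\int_0^\tau e^{-\omega_0(\tau-\sigma)}|\varepsilon-\kappa|e^{-\omega_0\sigma}d\sigma=\tau e^{-\omega_0\tau}|\varepsilon-\kappa|$. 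The factor $\tau$ is absorbed by again working with a slightly larger decay rate.
\item The $P_{\varepsilon}$ part is the main obstacle, because $S_{\varepsilon}^X$ grows like $e^{\tau-\sigma}$ there. The remedy is to avoid Duhamel for it. Since $P_{\varepsilon}Q_{\kappa}=(P_{\varepsilon}-P_{\kappa})Q_{\kappa}$ has norm $\lesssim|\varepsilon-\kappa|$, the full contribution $P_{\varepsilon}(S_{\varepsilon}^X-S_{\kappa}^X)Q_{\kappa}u$ equals $e^{\tau}P_{\varepsilon}Q_{\kappa}u-P_{\varepsilon}S_{\kappa}^XQ_{\kappa}u$. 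Rewriting the first piece, this becomes $P_{\varepsilon}Q_{\kappa}\bigl(S_{\varepsilon}^X(\tau)-S_{\kappa}^X(\tau)\bigr)Q_{\kappa}u$. So it is cleaner to estimate $Q_{\varepsilon}S_{\varepsilon}^X Q_{\kappa}$ via Duhamel, which only involves the $Q_{\varepsilon}$ part, and to handle $P_{\varepsilon}S_{\kappa}^X Q_{\kappa}u=(P_{\varepsilon}-P_{\kappa})S_{\kappa}^XQ_{\kappa}u$ directly; the latter is $\lesssim|\varepsilon-\kappa|e^{-\omega_0\tau}$.
\end{itemize}
Combining these bounds yields \eqref{Lipschitz property of semigroup on stable subspace}. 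Finally, extend from $\mc D(L^X)$ to all $u\in X_s^k$ by density.
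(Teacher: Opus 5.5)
Your proposal is correct. The commutation step and the decay on $\ran(I-P_{\varepsilon})$ follow the paper's route. In the decay step you use a uniform bound on the reduced resolvent in $\closure{\mathbb{H}_{-\omega_0}}$ and a quantitative Gearhart--Pr\"uss theorem. Your maximum-principle argument inside $B_{1/2}(1)$ is a more explicit version of the paper's remark that the estimate extends there.

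For the Lipschitz estimate, write $Q_{\varepsilon}=I-P_{\varepsilon}$. You arrive at exactly the three terms of the paper's final representation formula:
\begin{align*}
S_{\varepsilon}^X(\tau)Q_{\varepsilon}-S_{\kappa}^X(\tau)Q_{\kappa}
&= S_{\varepsilon}^X(\tau)Q_{\varepsilon}(Q_{\varepsilon}-Q_{\kappa})
+ Q_{\varepsilon}\bigl(S_{\varepsilon}^X(\tau)-S_{\kappa}^X(\tau)\bigr)Q_{\kappa}\\
&\quad - (P_{\varepsilon}-P_{\kappa})S_{\kappa}^X(\tau)Q_{\kappa}.
\end{align*}
This identity uses only that $P_{\varepsilon}$ and $P_{\kappa}$ are projections commuting with their semigroups; in particular it uses $P_{\kappa}S_{\kappa}^XQ_{\kappa}=0$. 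The difference from the paper lies in the last, unstable-mode term.
\begin{itemize}
\item The paper represents it as $-e^{\tau}\int_{\tau}^{\infty}e^{-\sigma}P_{\varepsilon}(L_{\varepsilon}^X-L_{\kappa}^X)S_{\kappa}^X(\sigma)Q_{\kappa}u\,d\sigma$. This first requires deriving the cancellation identity \eqref{cancellation identity} by a limit $\tau\to\infty$, and then estimating the tail integral.
\item You bound it at once by $\norm{P_{\varepsilon}-P_{\kappa}}\,\norm{S_{\kappa}^X(\tau)Q_{\kappa}}\lesssim\abs{\varepsilon-\kappa}e^{-\omega_0\tau}$. This uses only the operator-norm Lipschitz continuity of the Riesz projections established in Proposition \ref{Proposition: Stability of spectrum of Le}.
\end{itemize}
Your version is shorter and loses nothing; the paper's integral representation is not needed anywhere else.

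Two small remarks.

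First, delete the sentence claiming
$e^{\tau}P_{\varepsilon}Q_{\kappa}u-P_{\varepsilon}S_{\kappa}^XQ_{\kappa}u=P_{\varepsilon}Q_{\kappa}\bigl(S_{\varepsilon}^X(\tau)-S_{\kappa}^X(\tau)\bigr)Q_{\kappa}u$. It would require $P_{\varepsilon}P_{\kappa}S_{\varepsilon}^X(\tau)Q_{\kappa}=0$, which fails in general. Your final plan does not use it; the displayed identity above is what you actually need.

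Second, you do not need to shrink $\varepsilon^{**}$, either for enlarging $\omega_0$ in the decay step or for the rate $\omega_1>\omega_0$ used to absorb the factor $\tau$. The paper uses the latter tacitly as well. If the reduced resolvent is bounded by $M$ on $\closure{\mathbb{H}_{-\omega_0}}$, a Neumann series gives the bound $2M$ on $\{\Re\lambda\geq-\omega_0-1/(2M)\}$. This yields decay at the rate $\omega_0+1/(4M)$ for the same range of $\varepsilon$, with constants uniform in $\varepsilon$.
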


\begin{proof}
Because the semigroup $S_{\varepsilon}^X(\tau)$ commutes with its generator $L_{\varepsilon}^X$ it also commutes with the associated resolvent $R_{L_{\varepsilon}^X}$. Consequently, it commutes with the spectral projection $P_{\varepsilon}$ as well. Eq.~\eqref{Exponential growth on unstable subspace} then follows from uniqueness of solutions to $\partial_{\tau}u(\tau) = L_{\varepsilon}^Xu(\tau)$.

The decay on the stable subspace \eqref{Exponential decay on stable subspace} follows from \cite{Ost23} Theorem A.1 once there exists a constant $M_{\omega_0}>0$ with
\begin{align}\label{Uniform bound of resolvent on stable subspace}
    \norm{R_{L_{\varepsilon}^X}(\lambda)\left(I-P_{\varepsilon}\right)} \leq M_{\omega_0}
\end{align}

for all $\lambda \in \closure{\mathbb{H}_{-\omega_0}}$ and $\abs{\varepsilon} \leq \varepsilon^{**}$ for a sufficiently small $\varepsilon^{**} > 0$.

From Proposition \ref{Proposition: Stability of spectrum of Le} we already know that there exists a constant $M_{\omega_0}$ such that
\begin{align*}
    \norm{R_{L_{\varepsilon}^X}(\lambda)} \leq M_{\omega_0}
\end{align*}

holds for every $\lambda \in \closure{\mathbb{H}_{-\omega_0}}\, \backslash B_{1\slash2}(1)$ and $\abs{\varepsilon} \leq \varepsilon^{**}$. Since $R_{L_{\varepsilon}^X}(\lambda)\left(I-P_{\varepsilon}\right)$ is analytic in $\mathbb{H}_{-\omega_0}$ and coincides with the resolvent of $L_{\varepsilon}^X$ restricted to the range of $I-P_{\varepsilon}$, the estimate \eqref{Uniform bound of resolvent on stable subspace} extends to all $\lambda \in \closure{B_{1\slash2}(1)}$ as well.

To prove the Lipschitz estimate of the semigroup on the stable subspace \eqref{Lipschitz property of semigroup on stable subspace} let $u \in \mc{D}(L^X) = \mc{D}(L_{\varepsilon}^X) = \mc{D}(L_{\kappa}^X)$, choose $\omega_0 < \omega_1 < \widetilde{\omega}$, fix $\tau > 0$ and define
\begin{align*}
    F(\sigma) := S_{\varepsilon}^X(\tau-\sigma) S_{\kappa}^X(\sigma) u \quad \text{for} \quad 0 \leq \sigma \leq \tau.
\end{align*}
We obtain
\begin{align*}
    F'(\sigma) & = - S_{\varepsilon}^X(\tau-\sigma)L_{\varepsilon}^XS_{\kappa}^X(\sigma)u + S_{\varepsilon}^X(\tau-\sigma)L_{\kappa}^XS_{\kappa}^X(\sigma)u \\ & = - S_{\varepsilon}^X(\tau-\sigma)(L_{\varepsilon}^X-L_{\kappa}^X)S_{\kappa}^X(\sigma)u
\end{align*}
and integrating over $[0,\tau]$ gives
\begin{align}\label{Difference of semigroups}
    S_{\varepsilon}^X(\tau)u - S_{\kappa}^X(\tau)u = \int_0^{\tau} S_{\varepsilon}^X(\tau-\sigma)(L_{\varepsilon}^X-L_{\kappa}^X)S_{\kappa}^X(\sigma)u \, d\sigma.
\end{align}
We now apply this to $(I-P_{\kappa})u$ instead of just $u$ and obtain
\begin{align*}
    S_{\kappa}^X(\tau) (I-P_{\kappa})u = S_{\varepsilon}^X(\tau) (I-P_{\kappa})u - \int_0^{\tau} S_{\varepsilon}^X(\tau-\sigma)(L_{\varepsilon}^X-L_{\kappa}^X)S_{\kappa}^X(\sigma) (I-P_{\kappa})u \, d\sigma.
\end{align*}
If we now apply $P_{\varepsilon}$ to this, we obtain with \eqref{Exponential growth on unstable subspace}
\begin{align*}
    P_{\varepsilon} S_{\kappa}^X(\tau) (I - P_{\kappa}) u = e^{\tau} P_{\varepsilon} (I - P_{\kappa}) u - \int_0^{\tau} e^{\tau-\sigma}P_{\varepsilon} (L_{\varepsilon}^X-L_{\kappa}^X)S_{\kappa}^X(\sigma)(I-P_{\kappa}) u \, d\sigma.
\end{align*}
Multiplying by $e^{-\tau}$ we get
\begin{align*}
    e^{-\tau}P_{\varepsilon} S_{\kappa}^X(\tau) (I - P_{\kappa}) u =  P_{\varepsilon} (I - P_{\kappa}) u - \int_0^{\tau} e^{-\sigma}P_{\varepsilon} (L_{\varepsilon}^X-L_{\kappa}^X)S_{\kappa}^X(\sigma)(I-P_{\kappa}) u \, d\sigma
\end{align*}
and from \eqref{Exponential decay on stable subspace} we get that the left-hand side converges towards 0 as $\tau \to \infty$ and that the integral on the right-hand side converges. We therefore have
\begin{align}\label{cancellation identity}
    P_{\varepsilon} (I - P_{\kappa}) u = \int_0^{\infty} e^{-\sigma}P_{\varepsilon} (L_{\varepsilon}^X-L_{\kappa}^X)S_{\kappa}^X(\sigma)(I-P_{\kappa}) u \, d\sigma.
\end{align}
We now obtain with \eqref{Difference of semigroups}
\begin{align*}
    S_{\varepsilon}^X(\tau) (I-P_{\varepsilon}) u & - S_{\kappa}^X(\tau) (I-P_{\kappa}) u  = S_{\varepsilon}^X(\tau) (P_{\kappa}-P_{\varepsilon}) u + [S_{\varepsilon}^X(\tau) - S_{\kappa}^X(\tau)] (I-P_{\kappa}) u \\ & = S_{\varepsilon}^X(\tau) (P_{\kappa}-P_{\varepsilon}) u + \int_0^{\tau} S_{\varepsilon}^X(\tau-\sigma)(L_{\varepsilon}^X-L_{\kappa}^X)S_{\kappa}^X(\sigma)(I-P_{\kappa}) u \, d\sigma.
\end{align*}
If we now use $S_{\varepsilon}^X(\tau) = S_{\varepsilon}^X(\tau)(I-P_{\varepsilon}) + e^{\tau}P_{\varepsilon}$ and \eqref{cancellation identity} we obtain the following representation formula
\begin{align*}
    S_{\varepsilon}^X(\tau) & (I-P_{\varepsilon}) u  - S_{\kappa}^X(\tau) (I-P_{\kappa}) u \\ = &  S_{\varepsilon}^X(\tau) (I-P_{\varepsilon}) (P_{\kappa}-P_{\varepsilon}) u + \int_0^{\tau} S_{\varepsilon}^X(\tau-\sigma)(I-P_{\varepsilon})(L_{\varepsilon}^X-L_{\kappa}^X)S_{\kappa}^X(\sigma)(I-P_{\kappa}) u \, d\sigma \\ & + e^{\tau}\left[P_{\varepsilon}(P_{\kappa}-P_{\varepsilon}) u + \int_0^{\tau} e^{-\sigma}P_{\varepsilon}(L_{\varepsilon}^X-L_{\kappa}^X)S_{\kappa}^X(\sigma)(I-P_{\kappa}) u \, d\sigma\right] \\ = & S_{\varepsilon}^X(\tau) (I-P_{\varepsilon}) (P_{\kappa}-P_{\varepsilon}) u + \int_0^{\tau} S_{\varepsilon}^X(\tau-\sigma)(I-P_{\varepsilon})(L_{\varepsilon}^X-L_{\kappa}^X)S_{\kappa}^X(\sigma)(I-P_{\kappa}) u \, d\sigma \\ & + e^{\tau}\left[P_{\varepsilon}(P_{\kappa}-I) u + \int_0^{\tau} e^{-\sigma}P_{\varepsilon}(L_{\varepsilon}^X-L_{\kappa}^X)S_{\kappa}^X(\sigma)(I-P_{\kappa}) u \, d\sigma\right] \\ = & S_{\varepsilon}^X(\tau) (I-P_{\varepsilon}) (P_{\kappa}-P_{\varepsilon}) u + \int_0^{\tau} S_{\varepsilon}^X(\tau-\sigma)(I-P_{\varepsilon})(L_{\varepsilon}^X-L_{\kappa}^X)S_{\kappa}^X(\sigma)(I-P_{\kappa}) u \, d\sigma \\ & - e^{\tau} \int_{\tau}^{\infty} e^{-\sigma} P_{\varepsilon} (L_{\varepsilon}^X-L_{\kappa}^X)S_{\kappa}^X(\sigma)(I-P_{\kappa}) u \, d\sigma.
\end{align*}

For the first term we obtain with \eqref{Exponential decay on stable subspace} and the Lipschitz continuity of $P_{\varepsilon}$ for every $u \in \mc{D}(L^X)$
\begin{align*}
    \norm{S_{\varepsilon}^X(\tau) (I-P_{\varepsilon}) (P_{\kappa}-P_{\varepsilon})u}_{s,k} \lesssim e^{-\omega_0\tau} |\varepsilon-\kappa| \norm{u}_{s,k}.
\end{align*}
For the second term we use the Lipschitz continuity of $L_{\varepsilon}^X$ and get
\begin{align*}
    & \norm{\int_0^{\tau} S_{\varepsilon}^X(\tau-\sigma)(I-P_{\varepsilon})(L_{\varepsilon}^X-L_{\kappa}^X)S_{\kappa}^X(\sigma)(I-P_{\kappa}) u \, d\sigma}_{s,k} \\  \lesssim |\varepsilon-\kappa| & \int_0^{\tau} e^{-\omega_1(\tau-\sigma)}e^{-\omega_1\sigma} \, d\sigma \norm{u}_{s,k} = |\varepsilon-\kappa|\tau e^{-\omega_1\tau} \norm{u}_{s,k} \lesssim |\varepsilon-\kappa| e^{-\omega_0\tau} \norm{u}_{s,k}.
\end{align*}
For the last term we obtain analogously
\begin{align*}
    e^\tau \norm{\int_{\tau}^{\infty} e^{-\sigma} P_{\varepsilon} (L_{\varepsilon}^X-L_{\kappa}^X)S_{\kappa}^X(\sigma)(I-P_{\kappa}) u \, d\sigma}_{s,k} & \lesssim |\varepsilon-\kappa|e^{\tau} \int_{\tau}^{\infty} e^{-(1+\omega_0)\sigma} \,d\sigma \norm{u}_{s,k} \\ & \lesssim |\varepsilon-\kappa| e^{-\omega_0\tau} \norm{u}_{s,k},
\end{align*}
so that the claim now follows from the density of $\mc{D}(L^X)$ in $X_s^k$.

\end{proof}

\subsection{The full nonlinear Cauchy problem for the perturbation}\label{Section: Nonlinear Cauchy Problem}

We now turn to the full nonlinear Cauchy problem \eqref{Cauchy problem for perturbation stability analysis} but for now with arbitrary small initial data $u \in X_s^k$ that is
\begin{align}\label{Cauchy problem for arbitrary initial data}
    \begin{cases}
        \partial_{\tau} \phi_{\varepsilon}(\tau) & = \widetilde{L}_{\varepsilon}(\phi_{\varepsilon}(\tau)) + \widehat{N}_{\varepsilon} (\phi_{\varepsilon}(\tau)), \\
        \phi_{\varepsilon}(0) & = u,\\
    \end{cases}
\end{align}

By Duhamel’s formula, this Cauchy problem can be written as the following integral equation
\begin{align}\label{Integral equation for perturbation}
    \phi_{\varepsilon}(\tau) = S_{\varepsilon}^X(\tau)u + \int_0^{\tau} S_{\varepsilon}^X(\tau-\tau') \widehat{N}_{\varepsilon}(\phi_{\varepsilon}(\tau'))\, d\tau'.
\end{align}

We fix $\omega := \widetilde{\omega}\slash 2$, where $\widetilde{\omega}$ is the constant from Proposition \ref{Proposition: Properties of L_0}, and set $\overline{\varepsilon} := \varepsilon^{**}(\omega)$, with $\varepsilon^{**}(\omega)$ as in Proposition \ref{Proposition: Stability of spectrum of Le}. We now first prove a local Lipschitz estimate for the nonlinearity $\widehat{N}_{\varepsilon}$ defined in \eqref{Definition: Nonlinearity in stability analysis}.

\begin{lemma}\label{Local Lipschitz continuity for nonlinearity II}
For every $\abs{\varepsilon} \leq \overline{\varepsilon}$ we have $\widehat{N}_{\varepsilon} : X_s^k \to X_s^k$ as well as
\begin{align*}
    \norm{\widehat{N}_{\varepsilon}(u)-\widehat{N}_{\kappa}(v)}_{s,k} \lesssim \left(\norm{u}_{s,k} + \norm{v}_{s,k}\right)\norm{u-v}_{s,k} + \left(\norm{u}_{s,k}^2 + \norm{v}_{s,k}^2\right) \abs{\varepsilon-\kappa}
\end{align*}

for all $\abs{\varepsilon}, \abs{\kappa} \leq \overline{\varepsilon}$ and all $u,v \in \mc{B}_{\delta} \subset X_s^k$ for $0 < \delta \leq 1$ fixed.
\end{lemma}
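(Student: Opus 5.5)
We prove the estimate by the same scheme as the bound \eqref{Estimate for nonlinearity} in Lemma \ref{Lemma: Operator estimates}. The one new feature is that the expansion point $\psi_{\varepsilon}$ now depends on $\varepsilon$. We split
\begin{align*}
\widehat{N}_{\varepsilon}(u)-\widehat{N}_{\kappa}(v) = \big(\widehat{N}_{\varepsilon}(u)-\widehat{N}_{\varepsilon}(v)\big) + \big(\widehat{N}_{\varepsilon}(v)-\widehat{N}_{\kappa}(v)\big)
\end{align*}
and treat the two brackets separately. Once the estimate is proved, taking $v=0$ and using $\widehat{N}_{\varepsilon}(0)=0$ shows that $\widehat{N}_{\varepsilon}$ maps $X_s^k$ into itself.

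For the first bracket we apply \eqref{Third auxiliary equation} with $a=\abs{\xi}\psi_{\varepsilon}$, $b=\abs{\xi}v$, $c=\abs{\xi}u$. After cancelling $\abs{\xi}^3$ this gives
\begin{align*}
(n-3)\int_0^1\!\!\int_0^1\!\!\int_0^1 (u-v)\big(v+x(u-v)\big)\big(\psi_{\varepsilon}+y(v+x(u-v))\big)\,\eta_{\varepsilon}'''\big(z\abs{\xi}(\psi_{\varepsilon}+y(v+x(u-v)))\big)\,dz\,dy\,dx .
\end{align*}
This is a product $u_1u_2u_3F_{\varepsilon}(\abs{\cdot}w)$ with $F_\varepsilon=\eta_\varepsilon'''$, which is admissible by the auxiliary lemma. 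Here $\psi_{\varepsilon}\in X_s^k$ with norm bounded uniformly in $\varepsilon$, by Proposition \ref{Proposition: properties of self-similar solution}. Lemma \ref{Lemma: Schauder} gives the bound in $\dot H^{s-1}\cap\dot H^k$, and the embedding into $X_s^k$, exactly as in Lemma \ref{Lemma: Operator estimates}, gives $(\norm{u}_{s,k}+\norm{v}_{s,k})\norm{u-v}_{s,k}$. The polynomial factor $P$ is uniformly bounded, since all arguments lie in a fixed ball of radius $\lesssim 1+\sup_\varepsilon\norm{\psi_\varepsilon}_{s,k}$.

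For the second bracket we use \eqref{Third auxiliary equation} with $b=0$ to write
\begin{align*}
\widehat{N}_{\varepsilon}(v)=(n-3)\,v^2\int_0^1\!\!\int_0^1\!\!\int_0^1 x\,(\psi_{\varepsilon}+yxv)\,\eta_{\varepsilon}'''\big(z\abs{\xi}(\psi_{\varepsilon}+yxv)\big)\,dz\,dy\,dx,
\end{align*}
and do the same for $\kappa$. The difference of the integrands splits into three pieces:
\begin{enumerate}[label=(\roman*)]
\item $x(\psi_\varepsilon-\psi_\kappa)\,\eta_\varepsilon'''(\cdots_\varepsilon)$;
\item $x(\psi_\kappa+yxv)\big[\eta_\varepsilon'''(z\abs{\xi}(\psi_\varepsilon+yxv))-\eta_\kappa'''(z\abs{\xi}(\psi_\varepsilon+yxv))\big]$;
\item $x(\psi_\kappa+yxv)\big[\eta_\kappa'''(z\abs{\xi}(\psi_\varepsilon+yxv))-\eta_\kappa'''(z\abs{\xi}(\psi_\kappa+yxv))\big]$.
\end{enumerate}
For piece (i) we use $\norm{\psi_\varepsilon-\psi_\kappa}_{s,k}\lesssim\abs{\varepsilon-\kappa}$, from Proposition \ref{Proposition: properties of self-similar solution}, together with the algebra/Schauder bound. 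Piece (ii) is controlled by the first estimate of Lemma \ref{Lemma: Schauder}. Piece (iii) is controlled by the second estimate of Lemma \ref{Lemma: Schauder}, with $v_1-v_2=z(\psi_\varepsilon-\psi_\kappa)$, again Lipschitz in $\varepsilon$. Each piece carries the prefactor $v^2$, that is, two of the three factors $u_i$ equal $v$. Hence each is $\lesssim\norm{v}_{s,k}^2\abs{\varepsilon-\kappa}$, uniformly in $z,y,x$, and integrating over $[0,1]^3$ preserves the bound.

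\textbf{Main obstacle: piece (iii).} It requires Lemma \ref{Lemma: Schauder}'s second estimate to be applied with $F=\eta_\kappa'''$ and an \emph{extra} factor $z$ inside the argument. We absorb this by taking the function $z\,(\psi_\varepsilon+yxv)$ as the $v_j$ there, whose norm is bounded uniformly in $z\in[0,1]$. We also need the constants uniform in $\varepsilon$, which follows from the uniform bound on $\sup_{\abs{\varepsilon}\le\overline\varepsilon}\norm{\psi_\varepsilon}_{s,k}$.
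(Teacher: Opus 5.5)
Your proposal is correct and follows the paper's route. The paper only says the proof is analogous to that of \eqref{Estimate for nonlinearity} via Lemma \ref{Lemma: Schauder}; your split of $\widehat{N}_{\varepsilon}(v)-\widehat{N}_{\kappa}(v)$ into pieces (i)--(iii) supplies the detail this leaves implicit once the expansion point $\psi_{\varepsilon}$ depends on $\varepsilon$, using $\norm{\psi_{\varepsilon}-\psi_{\kappa}}_{s,k}\lesssim\abs{\varepsilon-\kappa}$ and the uniform bound on $\norm{\psi_{\varepsilon}}_{s,k}$ from Proposition \ref{Proposition: properties of self-similar solution}. One minor point: setting $v=0$ in the estimate only yields the mapping property on $\mc{B}_{\delta}$, so membership $\widehat{N}_{\varepsilon}(u)\in X_s^k$ for arbitrary $u\in X_s^k$ should instead come from the same product representation without the ball restriction (the paper glosses this in the same way).
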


\begin{proof}
The proof is analogous to the proof of Lemma \ref{Estimate for nonlinearity} using the Schauder estimate from Lemma \ref{Lemma: Schauder}. 
    
\end{proof}

The stability of $\psi_{\varepsilon}$ would now follow if we could show global existence and exponential decay of functions satisfying \eqref{Integral equation for perturbation} for arbitrary small initial data $u \in X_s^k$. Unfortunately, the unstable mode at $\lambda = 1$ of the linearized operator $L_{\varepsilon}^X$ prevents us from concluding this immediately. To overcome this we follow the standard approach (see for example Section 4.2 in \cite{DonSchWit25}) by subtracting a correction term
\begin{align*}
    C(\phi,\varepsilon,u) := P_{\varepsilon} \left( u + \int_0^{\infty}e^{-\tau'}\, \widehat{N}_{\varepsilon}(\phi(\tau'))\,d\tau'\right)
\end{align*}

to the initial data, which then stabilizes the evolution. More concretely, we introduce the Banach space
\begin{align*}
    \mc{X} := \{\phi \in C([0,\infty), X_s^k) : \norm{\phi}_{\mc{X}} := \sup_{\tau>0} e^{\omega\tau} \norm{\phi(\tau)}_{s,k} < \infty\}
\end{align*}

and want to find a $\phi \in \mc{X}$ solving the following integral equation
\begin{align}\label{Fixed point for perturbation}
    \phi(\tau) = S_{\varepsilon}^X(\tau)\left[u - C(\phi, \varepsilon, u)\right] + \int_0^{\tau} S_{\varepsilon}^X(\tau - \tau') \widehat{N}_{\varepsilon}(\phi(\tau'))\,d\tau'.
\end{align}

Since this is a fixed-point equation we define
\begin{align*}
    K(\phi,\varepsilon,u)(\tau) := S_{\varepsilon}^X(\tau)\left[u - C(\phi, \varepsilon, u)\right] + \int_0^{\tau} S_{\varepsilon}^X(\tau - \tau') \widehat{N}_{\varepsilon}(\phi(\tau'))\,d\tau'
\end{align*}

and want to show that $K(\cdot, \varepsilon, u)$ is a well-defined contraction on $\mc{X}_{\delta} := \{\phi \in \mc{X} : ~ \norm{\phi}_{\mc{X}} \leq \delta\}$ for sufficiently small $\delta > 0$ and fixed $(\varepsilon, u)$.   

\begin{proposition}\label{Proposition: Cauchy problem with modified initial data}
There exist constants $0 < \delta_0 < 1$ and $C_0 > 1$ such that for all $0 < \delta \leq \delta_0$ and $C \geq C_0$ there exists for every $\abs{\varepsilon} \leq \overline{\varepsilon}$ and every $u \in X_s^k$ with $\norm{u}_{s,k} \leq \frac{\delta}{C}$ a unique function $\phi_{\varepsilon}(u) \in \mc{X}_{\delta}$ such that \eqref{Fixed point for perturbation} holds for all $\tau \geq 0$.

Additionally, the map $(u,\varepsilon) \mapsto \phi_{\varepsilon}(u)$ is Lipschitz continuous in the sense that the following estimate holds for all $u, v \in X_s^k$ with $\norm{u}_{s,k}, \norm{v}_{s,k} \leq \frac{\delta}{C}$ and all $\abs{\varepsilon}, \abs{\kappa} \leq \overline{\varepsilon}$
\begin{align*}
    \norm{\phi_{\varepsilon}(u) - \phi_{\kappa}(v)}_{\mc{X}} \lesssim \norm{u-v}_{s,k} + \abs{\varepsilon-\kappa}.
\end{align*}

\end{proposition}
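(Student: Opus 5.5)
The plan is a Banach fixed-point argument for $K(\cdot,\varepsilon,u)$ on $\mc{X}_\delta$, followed by a separate Lipschitz estimate in $(u,\varepsilon)$.

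\textbf{Splitting of $K$.} Using $S_\varepsilon^X(\tau)P_\varepsilon = e^\tau P_\varepsilon$ from Proposition \ref{Projection properties}, I rewrite $K$ as
\begin{align*}
K(\phi,\varepsilon,u)(\tau) = S_\varepsilon^X(\tau)(I-P_\varepsilon)u + \int_0^\tau S_\varepsilon^X(\tau-\tau')(I-P_\varepsilon)\widehat{N}_\varepsilon(\phi(\tau'))\,d\tau' - \int_\tau^\infty e^{\tau-\tau'}P_\varepsilon \widehat{N}_\varepsilon(\phi(\tau'))\,d\tau'.
\end{align*}
This is the usual cancellation: the correction term $C(\phi,\varepsilon,u)$ removes the growing part $e^\tau P_\varepsilon[u+\int_0^\infty\cdots]$. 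I would also record that $\sup_{|\varepsilon|\le\overline\varepsilon}\|P_\varepsilon\|<\infty$, which follows from the Lipschitz dependence of $P_\varepsilon$ on $\varepsilon$ established in the proof of Proposition \ref{Proposition: Stability of spectrum of Le}.

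\textbf{Self-map and contraction.} For $\phi\in\mc{X}_\delta$, Lemma \ref{Local Lipschitz continuity for nonlinearity II} with $v=0$ and $\widehat N_\varepsilon(0)=0$ gives $\|\widehat N_\varepsilon(\phi(\tau'))\|_{s,k}\lesssim \delta^2 e^{-2\omega\tau'}$. By the stable-subspace decay \eqref{Exponential decay on stable subspace} (applied with $\omega_0$ slightly above $\omega=\widetilde\omega/2$, which is admissible since $\omega<\widetilde\omega$), the first term is $\lesssim e^{-\omega\tau}\|u\|_{s,k}$. The second is bounded by $\delta^2\int_0^\tau e^{-\omega_0(\tau-\tau')}e^{-2\omega\tau'}\,d\tau'\lesssim \delta^2e^{-\omega\tau}$. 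The third is bounded by $\delta^2\int_\tau^\infty e^{\tau-\tau'}e^{-2\omega\tau'}\,d\tau'\lesssim\delta^2 e^{-2\omega\tau}$. Altogether $\|K(\phi)\|_{\mc X}\le C_1(\|u\|_{s,k}+\delta^2)\le C_1(\delta/C+\delta^2)$, so choosing $C_0\ge 2C_1$ and $\delta_0\le 1/(2C_1)$ makes $K(\cdot,\varepsilon,u)$ map $\mc{X}_\delta$ into itself. The same three estimates, with the difference bound $\|\widehat N_\varepsilon(\phi)-\widehat N_\varepsilon(\psi)\|_{s,k}\lesssim\delta e^{-2\omega\tau}\|\phi-\psi\|_{\mc X}$, give $\|K(\phi)-K(\psi)\|_{\mc X}\le C_2\delta\|\phi-\psi\|_{\mc X}\le\frac12\|\phi-\psi\|_{\mc X}$ once $\delta_0$ is shrunk further. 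Continuity of $\tau\mapsto K(\phi)(\tau)$ in $X_s^k$ follows from strong continuity of the semigroup. Banach's fixed-point theorem then yields the unique $\phi_\varepsilon(u)\in\mc X_\delta$ solving \eqref{Fixed point for perturbation}.

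\textbf{Lipschitz dependence (main obstacle).} I write
\begin{align*}
\phi_\varepsilon(u)-\phi_\kappa(v) = \big[K(\phi_\varepsilon(u),\varepsilon,u)-K(\phi_\kappa(v),\varepsilon,u)\big] + \big[K(\phi_\kappa(v),\varepsilon,u)-K(\phi_\kappa(v),\kappa,v)\big].
\end{align*}
The first bracket is at most $\frac12\|\phi_\varepsilon(u)-\phi_\kappa(v)\|_{\mc X}$ by the contraction estimate. The second bracket is where the parameter dependence enters, and it is the delicate step. Using the split form of $K$, I estimate each piece. In the linear piece, the difference $S_\varepsilon^X(\tau)(I-P_\varepsilon)-S_\kappa^X(\tau)(I-P_\kappa)$ is controlled by \eqref{Lipschitz property of semigroup on stable subspace}, giving $e^{-\omega_0\tau}|\varepsilon-\kappa|\,\|u\|$. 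For the Duhamel piece, $\widehat N_\varepsilon-\widehat N_\kappa$ is handled by Lemma \ref{Local Lipschitz continuity for nonlinearity II}, and the semigroup difference again by \eqref{Lipschitz property of semigroup on stable subspace}; both contributions are of size $e^{-\omega\tau}\delta^2|\varepsilon-\kappa|$. For the tail term, the difference $P_\varepsilon-P_\kappa$ is $O(|\varepsilon-\kappa|)$ by the Lipschitz continuity of the Riesz projections. Combining these gives
\begin{align*}
\|K(\phi_\kappa(v),\varepsilon,u)-K(\phi_\kappa(v),\kappa,v)\|_{\mc X}\lesssim\|u-v\|_{s,k}+|\varepsilon-\kappa|,
\end{align*}
and absorbing the $\frac12$ term from the first bracket proves the claimed Lipschitz estimate for $(u,\varepsilon)\mapsto\phi_\varepsilon(u)$.
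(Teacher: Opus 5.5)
Your proposal is correct and follows essentially the same route as the paper. It uses the same splitting of $K$ via $S_\varepsilon^X(\tau)P_\varepsilon=e^\tau P_\varepsilon$ and the same self-map and contraction estimates. Your Lipschitz step is only organized differently: you invoke the contraction at fixed $(u,\varepsilon)$ and estimate the parameter difference at fixed $\phi_\kappa(v)$, whereas the paper estimates the full difference term by term and then absorbs $\delta\|\phi_\varepsilon(u)-\phi_\kappa(v)\|_{\mc X}$.

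One small correction: with $\overline\varepsilon=\varepsilon^{**}(\omega)$ fixed, Proposition \ref{Projection properties} is only guaranteed at rate $\omega_0=\omega$ for $|\varepsilon|\le\overline\varepsilon$, not at a rate slightly above $\omega$. Rate $\omega$ already suffices, since $\int_0^\tau e^{-\omega(\tau-\tau')}e^{-2\omega\tau'}\,d\tau'\le\omega^{-1}e^{-\omega\tau}$.
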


\begin{proof}
First, we show that the map $K_{(u,\varepsilon)}(\phi) := K(\phi,\varepsilon,u)$ is a well-defined contraction on $\mc{X}_{\delta}$ for fixed $|\varepsilon| \leq \overline{\varepsilon}$ all sufficiently large  $C>1$, sufficiently small $\delta >0$ and all $u \in X_s^k$ with $\norm{u}_{s,k}\leq\frac{\delta}{C}$. To see this we take $\phi \in \mc{X}_{\delta}$ and $\tau \geq 0$ and write $K_{(u,\varepsilon)}$ in the following way
\begin{align}
    &K_{(u,\varepsilon)}(\phi)(\tau)\nonumber\\ = &\,S_{\varepsilon}^X(\tau)(I-P_{\varepsilon})u - \int_{\tau}^{\infty}e^{\tau-\tau'}\,P_{\varepsilon}\,\widehat{N}_{\varepsilon}(\phi(\tau'))\,d\tau' + \int_0^{\tau}S_{\varepsilon}^X(\tau-\tau')(I-P_{\varepsilon}) \widehat{N}_{\varepsilon}(\phi(\tau'))\,d\tau'.\label{Rewritten fixed-point operator} 
\end{align}

From this we obtain with Proposition \ref{Projection properties} and Lemma \ref{Local Lipschitz continuity for nonlinearity II}
\begin{align*}
    \norm{K_{(u,\varepsilon)}(\phi)(\tau)}_{s,k} \lesssim \frac{\delta}{C}e^{-\omega\,\tau} + \delta^2 e^{-2\omega\,\tau} + \delta^2 e^{-\omega\,\tau} (e^{-\omega\,\tau} + 1) \lesssim \left(\frac{1}{C}+\delta\right) \delta e^{-\omega\,\tau}
\end{align*}

so that we have
\begin{align*}
    \norm{K_{(u,\varepsilon)}(\phi)(\tau)}_{s,k} \leq \delta e^{-\omega\,\tau}
\end{align*}

if we choose $C \geq C_0$ and $0 < \delta \leq \delta_0$ with $C_0$ sufficiently large and $\delta_0>0$ sufficiently small. 

Since the continuity of the mapping $\tau \mapsto K_{(u,\varepsilon)}(\phi)(\tau)$ follows from the dominated convergence we conclude that $K_{(u,\varepsilon)}: \mc{X}_{\delta} \to  \mc{X}_{\delta}$ is well-defined. 

To show that $K_{(u,\varepsilon)}$ is a contraction on $\mc{X}_{\delta}$ (for potentially even smaller $\delta>0$) we take $\phi,\psi \in \mc{X}_{\delta}$ and calculate for every $\tau \geq 0$ by using again the representation of $K_{(u,\varepsilon)}$ from Eq.~\eqref{Rewritten fixed-point operator}
\begin{align*}
    & \norm{K_{(u,\varepsilon)}(\phi)(\tau)-K_{(u,\varepsilon)}(\psi)(\tau)}_{s,k}\\ \lesssim &\,\int_{\tau}^{\infty}e^{\tau-\tau'}\lVert\widehat{N}_{\varepsilon}(\phi(\tau'))-\widehat{N}_{\varepsilon}(\psi(\tau'))\rVert_{s,k}\,d\tau' + \int_0^{\tau}e^{-\omega\,(\tau-\tau')}\lVert\widehat{N}_{\varepsilon}(\phi(\tau'))-\widehat{N}_{\varepsilon}(\psi(\tau'))\rVert_{s,k}\,d\tau'\\ \lesssim & \, \left(\delta e^{-2\omega\,\tau}+\delta e^{-\omega\,\tau}(e^{-\omega\,\tau}+1)\right)\norm{\phi-\psi}_{\mc{X}}.
\end{align*}

If we now choose $\delta_0>0$ sufficiently small we get
\begin{align*}
    \norm{K_{(u,\varepsilon)}(\phi)-K_{(u,\varepsilon)}(\psi)}_{\mc{X}} \leq \frac{1}{2}\norm{\phi-\psi}_{\mc{X}},
\end{align*}

for all $0 < \delta \leq \delta_0$, $C \geq C_0$ and all $\varepsilon \in \R$ with $\abs{\varepsilon} \leq \overline{\varepsilon}$ so that the existence of a unique $\phi_{\varepsilon}(u) \in \mc{X}_{\delta}$ follows from the contraction mapping principle. 

It remains to prove that the map $(u,\varepsilon) \mapsto \phi_{\varepsilon}(u) \in \mc{X}_{\delta}$ is Lipschitz continuous. For this we take $(u,\varepsilon),(v,\kappa) \in \mc{B}_{\frac{\delta}{C}}\times [-\overline{\varepsilon},\overline{\varepsilon}]$ and obtain by the previous considerations functions $\phi_{\varepsilon}(u),\phi_{\kappa}(v) \in \mc{X}_{\delta}$ solving
\begin{align*}
     \phi_{\varepsilon}(u)(\tau) = K(\phi_{\varepsilon}(u),\varepsilon,u)(\tau) \quad \text{and} \quad \phi_{\kappa}(v)(\tau) = K(\phi_{\kappa}(v),\kappa,v)(\tau) \quad \forall \, \tau \geq 0. 
\end{align*}

We now show that
\begin{align*}
    \norm{K(\phi_{\varepsilon}(u),\varepsilon,u)-K(\phi_{\kappa}(v),\kappa,v)}_{\mc{X}} \lesssim \norm{u-v}_{s,k} + \abs{\varepsilon-\kappa}.
\end{align*}
For this we take $\tau \geq 0$ and estimate the terms in \eqref{Rewritten fixed-point operator} separately. For the first term we simply get from \eqref{Lipschitz property of semigroup on stable subspace}
\begin{align*}
    \norm{S_{\varepsilon}^X(\tau)(I-P_{\varepsilon})u - S_{\kappa}^X(\tau)(I-P_{\kappa})v} \lesssim \frac{\delta}{C}e^{-\omega\,\tau}\abs{\varepsilon-\kappa} + e^{-\omega\,\tau}\norm{u-v}_{s,k}.
\end{align*}

For the second term we apply Lemma \ref{Local Lipschitz continuity for nonlinearity II} and the Lipschitz continuity of $P_{\varepsilon}$ to get
\begin{align*}
    &\int_{\tau}^{\infty}e^{\tau-\tau'}\lVert P_{\varepsilon}\,\widehat{N}_{\varepsilon}(\phi_{\varepsilon}(u)(\tau'))-P_{\kappa}\,\widehat{N}_{\kappa}(\phi_{\kappa}(v)(\tau'))\rVert_{s,k}\,d\tau'\\ \lesssim & \, \abs{\varepsilon-\kappa}\int_{\tau}^{\infty}e^{\tau-\tau'} \lVert\widehat{N}_{\kappa}(\phi_{\kappa}(v)(\tau'))\rVert_{s,k}\,d\tau' + \int_{\tau}^{\infty}e^{\tau-\tau'}\lVert\widehat{N}_{\varepsilon}(\phi_{\varepsilon}(u)(\tau'))-\widehat{N}_{\kappa}(\phi_{\kappa}(v)(\tau'))\rVert_{s,k}\,d\tau' \\ \lesssim & \, \abs{\varepsilon-\kappa}\, \delta^2 \, e^{-2\omega\,\tau} + \delta \, e^{-2\omega\,\tau} \norm{\phi_{\varepsilon}(u)-\phi_{\kappa}(v)}_{\mc{X}}
\end{align*}

and for the last term we similarly get
\begin{align*}
    &\int_0^{\tau}\norm{S_{\varepsilon}^X(\tau-\tau')(I-P_{\varepsilon}) \widehat{N}_{\varepsilon}(\phi_{\varepsilon}(u)(\tau')) - S_{\kappa}^X(\tau-\tau')(I-P_{\kappa}) \widehat{N}_{\kappa}(\phi_{\kappa}(v)(\tau'))}_{s,k}\,d\tau' \\ \lesssim & \,\abs{\varepsilon-\kappa} \int_0^{\tau}e^{-\omega\,(\tau-\tau')}\lVert\widehat{N}_{\kappa}(\phi_{\kappa}(v)(\tau'))\rVert_{s,k}\,d\tau'\\ & + \int_0^{\tau}e^{-\omega\,(\tau-\tau')}\lVert\widehat{N}_{\varepsilon}(\phi_{\varepsilon}(u)(\tau')) - \widehat{N}_{\kappa}(\phi_{\kappa}(v)(\tau'))\rVert_{s,k}\,d\tau' \\ \lesssim & \, \abs{\varepsilon-\kappa} \, \delta^2 \, e^{-\omega\,\tau} + \delta \, e^{-\omega\,\tau} \norm{\phi_{\varepsilon}(u) - \phi_{\kappa}(v)}_{\mc{X}}.
\end{align*}

With that we obtain 
\begin{align*}
    \norm{\phi_{\varepsilon}(u)-\phi_{\kappa}(v)}_{\mc{X}} =& \norm{K(\phi_{\varepsilon}(u),\varepsilon,u)-K(\phi_{\kappa}(v),\kappa,v)}_{\mc{X}} \\ \lesssim & \, \abs{\varepsilon-\kappa} + \norm{u-v}_{s,k} + \delta \norm{\phi_{\varepsilon}(u)-\phi_{\kappa}(v)}_{\mc{X}}.
\end{align*}

For small enough $\delta>0$ we get $\norm{\phi_{\varepsilon}(u)-\phi_{\kappa}(v)}_{\mc{X}} \lesssim \abs{\varepsilon-\kappa} + \norm{u-v}_{s,k}$ as desired.
    
\end{proof}

To now obtain a solution to the original integral equation \eqref{Integral equation for perturbation} we adjust the blowup time $T$ so that the corresponding correction term vanishes for this particular choice of $T$.

We therefore return to the Cauchy problem \eqref{Cauchy problem for perturbation stability analysis} but now with the specific initial data and define 
\begin{align*}
U_{\varepsilon}(v,T) := v^T + \psi_{\varepsilon}^T - \psi_{\varepsilon} = \sqrt{T}v(\sqrt{T}\cdot) + \sqrt{T}\psi_{\varepsilon}(\sqrt{T}\cdot) - \psi_{\varepsilon}.
\end{align*}

\begin{lemma}\label{Lemma: Initial data operator}
Take $0 < \delta \leq \frac{1}{2}$. For every $\varepsilon \in \R$ with $\abs{\varepsilon} \leq \overline{\varepsilon}$ and every fixed $v \in X_s^k$ the map
\begin{align*}
    U_{\varepsilon}(v,\cdot) : [1-\delta,1+\delta] \to  X_s^k, ~ T \mapsto U_{\varepsilon}(v,T)
\end{align*}

is continuous and for every $T \in [1 - \delta, 1 + \delta]$ the initial data operator can be written as
\begin{align}\label{Equation for initial data operator}
    U_{\varepsilon}(v,T) = v^T + (T-1) g_{\varepsilon} + R_{\varepsilon}(T),
\end{align}

where $R_{\varepsilon}$ is a remainder term satisfying 
\begin{align*}
    \norm{R_{\varepsilon}(T)}_{s,k} \leq M_{\varepsilon}|T-1|^2
\end{align*}

for a constant $M_{\varepsilon} > 0$.
\end{lemma}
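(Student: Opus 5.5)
The plan is to split $U_{\varepsilon}(v,T)$ into the rescaled perturbation $v^T$ and the profile difference $\psi_{\varepsilon}^T - \psi_{\varepsilon}$, and to treat the two pieces separately.

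\emph{Continuity of the first piece.} For fixed $v\in X_s^k$, continuity of $T\mapsto v^T=\sqrt{T}v(\sqrt{T}\cdot)$ follows from the scaling of the homogeneous norms. Indeed,
\[
\|v^T\|_{\dot H^r}=T^{\frac12+\frac r2-\frac n4}\|v\|_{\dot H^r}.
\]
Hence the dilations are uniformly bounded on $X_s^k$ for $T\in[1-\delta,1+\delta]\subset[\tfrac12,\tfrac32]$. Continuity on $C^\infty_{c,r}$ is clear, for instance by dominated convergence on the Fourier side. Density of $C^\infty_{c,r}$ in $X_s^k$ together with the uniform bound then gives continuity on all of $X_s^k$.

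\emph{Expansion of the profile term.} Write $\lambda=\sqrt{T}$ and $F(\lambda)(y):=\lambda\psi_{\varepsilon}(\lambda y)$. Then
\[
\partial_\lambda F(\lambda)(y)=\psi_{\varepsilon}(\lambda y)+\lambda y\cdot\nabla\psi_{\varepsilon}(\lambda y)=2(\Lambda\psi_{\varepsilon})(\lambda y)=2g_{\varepsilon}(\lambda y).
\]
At $\lambda=1$ this equals $2g_{\varepsilon}$, and $d\lambda/dT=\tfrac12$ at $T=1$. The first-order Taylor term in $T$ is therefore exactly $(T-1)g_{\varepsilon}$. For the remainder, Taylor's formula with integral remainder in the variable $T$ gives pointwise
\[
R_{\varepsilon}(T)(y)=(T-1)^2\int_0^1(1-\theta)\,\partial_T^2\big[\sqrt{T_\theta}\,\psi_{\varepsilon}(\sqrt{T_\theta}\,y)\big]\,d\theta,\qquad T_\theta=1+\theta(T-1).
\]
The second derivative $\partial_T^2$ is a combination, with coefficients bounded in $T_\theta\in[\tfrac12,\tfrac32]$, of $g_{\varepsilon}(\mu y)$ and $(y\cdot\nabla g_{\varepsilon})(\mu y)$, where $\mu=\sqrt{T_\theta}$.

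\emph{Bounding the remainder in $X_s^k$.} By \eqref{Decay of psi_e}, both $g_{\varepsilon}$ and $y\cdot\nabla g_{\varepsilon}$ satisfy symbol-type bounds
\[
|\partial^\alpha f(y)|\lesssim\langle y\rangle^{-3-|\alpha|}.
\]
These bounds persist uniformly under dilations $y\mapsto\mu y$ with $\mu\in[\tfrac{1}{\sqrt2},\sqrt{3/2}]$. The quantitative form of Lemma 2.1 of \cite{DonSchWit25}, recorded in the proof of Lemma \ref{Lemma: Eigenfunction}, then bounds the $\dot H^r$-norm of the integrand uniformly in $\theta$ and $T$. This applies because $s>\frac n2-1>\frac n2-3$. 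Moving the norm inside the integral (Minkowski) yields $\|R_{\varepsilon}(T)\|_{s,k}\le M_{\varepsilon}|T-1|^2$. By the uniform bounds in \eqref{Decay of psi_e}, $M_{\varepsilon}$ can even be taken independent of $\varepsilon$. Continuity of $T\mapsto\psi_{\varepsilon}^T-\psi_{\varepsilon}$ in $X_s^k$ follows from the same representation at first order, and combined with the first step this gives continuity of $U_{\varepsilon}(v,\cdot)$.

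The main technical point is justifying this last step. One must verify that the pointwise Taylor remainder coincides with the $X_s^k$-valued integral. It suffices to note that the integrand is continuous in $\theta$ with values in $X_s^k$, which again follows from the decay bounds and dominated convergence. The uniform symbol estimates under dilation are what make the whole argument work.
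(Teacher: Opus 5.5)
Your proposal is correct and follows the same route as the paper: a Taylor expansion of $T\mapsto\psi_\varepsilon^T$ in $X_s^k$ with first derivative $\Lambda\psi_\varepsilon=g_\varepsilon$ at $T=1$, and a second-order remainder controlled by the decay of $g_\varepsilon$ and $\Lambda g_\varepsilon$ (equivalently your $y\cdot\nabla g_\varepsilon$). You also supply the continuity of $T\mapsto v^T$ via the scaling identity $\|f(\mu\cdot)\|_{\dot H^r}=\mu^{r-\frac n2}\|f\|_{\dot H^r}$ and density, which the paper leaves unstated; that same identity would also give your uniform-in-$\mu$ bounds on the dilated integrand directly, without re-invoking the symbol-type estimates.
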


\begin{proof}
The continuity of $U_{\varepsilon}(v, \cdot)$ follows along the lines as for example ... . Eq.~\eqref{Equation for initial data operator} follows from a Taylor expansion applied to the map $[1 - \delta, 1 + \delta] \to X_s^k, ~ T \mapsto \psi^T_{\varepsilon}$ using the following fact
\begin{align*}
    \partial_T \restr{\sqrt{T}\psi_{\varepsilon}(\sqrt{T}\cdot)}{T=1} = \Lambda \psi_{\varepsilon} = g_{\varepsilon}. 
\end{align*}

The remainder term $R_{\varepsilon}(T)$ then satisfies 
\begin{align*}
    \norm{R_{\varepsilon}(T)}_{s,k} \lesssim (T-1)^2 \sum_{j=0}^2 \norm{\Lambda^j\psi_{\varepsilon}}_{s,k}
\end{align*}

and we remark that due to the decay of $g_{\varepsilon}$ we have that also $\Lambda g_{\varepsilon}$ belongs to $X_s^k$ so that the right-hand side of the above inequality is finite.

\end{proof}

Now, we are in the position to prove the central result of this section.

\begin{theorem}\label{Theorem: Corrected blowup time}
For any $\varepsilon \in \R$ with $\abs{\varepsilon} \leq \overline{\varepsilon}$, there are constants $0 < \delta_{\varepsilon} < 1$ and $C_{\varepsilon} > 1$ such that for all $0 < \delta \leq \delta_{\varepsilon}$ and all $C \geq C_{\varepsilon}$ the following statement holds: For every $v \in X_s^k$ with $\norm{v}_{s,k} \leq \frac{\delta}{C^2}$ there exists a $T_{\varepsilon} = T_{\varepsilon}(v) \in [1-\frac{\delta}{C}, 1+\frac{\delta}{C}]$ and a unique $\phi_{\varepsilon} \in C([0,\infty);  X_s^k)$ satisfying
\begin{align}\label{Integral equation with modification}
    \phi_{\varepsilon}(\tau) = S_{\varepsilon}^X(\tau) U_{\varepsilon}(v,T_{\varepsilon})+\int_0^{\tau}S_{\varepsilon}^X(\tau-\tau')\widehat{N}_{\varepsilon}(\phi_{\varepsilon}(\tau'))\,d\tau' \quad \text{for all} \quad \tau \geq 0.
\end{align}

Furthermore,
\begin{align*}
    \norm{\phi_{\varepsilon}(\tau)}_{s,k} \leq \delta e^{- \omega \tau}, \quad \forall \tau \geq 0.
\end{align*}  
\end{theorem}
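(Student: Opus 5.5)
The plan is the standard Lyapunov--Perron / Brouwer argument: first solve the modified equation \eqref{Fixed point for perturbation} with initial datum $U_{\varepsilon}(v,T)$, then choose $T$ so that the correction term vanishes.

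First, for $v$ with $\norm{v}_{s,k}\leq \frac{\delta}{C^2}$ and $T\in[1-\frac{\delta}{C},1+\frac{\delta}{C}]$, we bound the initial datum using Lemma \ref{Lemma: Initial data operator}. Since the rescaling $f\mapsto f^T$ is bounded on $X_s^k$ uniformly for $T$ near $1$, we get
\[
\norm{U_{\varepsilon}(v,T)}_{s,k}\lesssim \frac{\delta}{C^2}+\frac{\delta}{C}\norm{g_{\varepsilon}}_{s,k}+M_{\varepsilon}\frac{\delta^2}{C^2}.
\]
With $C_\varepsilon$ large and $\delta_\varepsilon$ small, this is at most $\delta/C_0$. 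Proposition \ref{Proposition: Cauchy problem with modified initial data} then yields a unique $\phi_{\varepsilon}(U_{\varepsilon}(v,T))\in\mc{X}_\delta$ solving \eqref{Fixed point for perturbation}. Moreover, $T\mapsto \phi_\varepsilon(U_\varepsilon(v,T))$ is continuous, by the Lipschitz dependence on the datum together with the continuity of $T\mapsto U_\varepsilon(v,T)$.

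Next, it suffices to find $T$ with $C(\phi,\varepsilon,U_\varepsilon(v,T))=0$; then \eqref{Fixed point for perturbation} reduces to \eqref{Integral equation with modification}. Since $\ran P_\varepsilon=\langle g_\varepsilon\rangle$, the condition is scalar: $P_\varepsilon f=\ell_\varepsilon(f)g_\varepsilon$ for a bounded functional $\ell_\varepsilon$ with $\ell_\varepsilon(g_\varepsilon)=1$. Using $P_\varepsilon g_\varepsilon=g_\varepsilon$ and \eqref{Equation for initial data operator}, the condition becomes $T-1=F(T)$, where
\[
F(T):=-\ell_\varepsilon\Big(v^T+R_\varepsilon(T)+\int_0^\infty e^{-\tau'}\widehat{N}_\varepsilon(\phi(\tau'))\,d\tau'\Big).
\]
We estimate each piece:
\begin{itemize}
\item $|\ell_\varepsilon(v^T)|\lesssim \delta/C^2$;
\item $|\ell_\varepsilon(R_\varepsilon(T))|\lesssim M_\varepsilon \delta^2/C^2$;
\item by Lemma \ref{Local Lipschitz continuity for nonlinearity II} and $\phi\in\mc{X}_\delta$, the integral term is $\lesssim\delta^2$.
\end{itemize}

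The main obstacle is the last bound: $\delta^2$ is not obviously $\le\delta/C$ uniformly in $C$. I would fix this by running the construction with $\delta$ replaced by the smaller radius $\delta':=\delta/C$. This is allowed because the datum has size $\lesssim \delta/C^2=\delta'/C$, so Proposition \ref{Proposition: Cauchy problem with modified initial data} applies with radius $\delta'$. The nonlinear contribution is then $\lesssim (\delta/C)^2\le \frac{\delta}{2C}$ once $\delta_\varepsilon$ is small. Altogether $|F(T)|\le \frac{\delta}{C}$ for $C\ge C_\varepsilon$ and $\delta\le\delta_\varepsilon$.

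Thus $F$ is a continuous self-map of $[1-\frac{\delta}{C},1+\frac{\delta}{C}]$ after the shift $T-1$. The intermediate value theorem (Brouwer in one dimension) gives a fixed point $T_\varepsilon$, at which the correction vanishes. The resulting $\phi_\varepsilon$ solves \eqref{Integral equation with modification} and satisfies $\norm{\phi_\varepsilon(\tau)}_{s,k}\le\delta' e^{-\omega\tau}\le\delta e^{-\omega\tau}$.

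For uniqueness, take any solution of \eqref{Integral equation with modification} in $\mc{X}_\delta$. For such a solution the correction term must vanish: otherwise the $P_\varepsilon$-component grows like $e^\tau$, which contradicts decay. So it solves \eqref{Fixed point for perturbation}, and the contraction property in Proposition \ref{Proposition: Cauchy problem with modified initial data} gives uniqueness. Uniqueness is therefore understood within the class of decaying solutions.
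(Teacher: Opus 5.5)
Your route is the paper's: bound the datum, apply Proposition~\ref{Proposition: Cauchy problem with modified initial data} for each $T$, reduce the vanishing of the correction to a scalar fixed-point problem for $T$, and close with Brouwer. You are also right that the paper's own bound $\delta^2 N_\varepsilon$ on the nonlinear contribution is not $\le \delta/C$ uniformly over all $C\ge C_\varepsilon$.

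\textbf{Your repair is mis-justified.} The datum $U_\varepsilon(v,T)$ contains the term $(T-1)g_\varepsilon$. So, as your own first estimate shows, its norm is of order $\norm{g_\varepsilon}_{s,k}\,\delta/C=\norm{g_\varepsilon}_{s,k}\,\delta'$, not $\lesssim \delta/C^2=\delta'/C$. Proposition~\ref{Proposition: Cauchy problem with modified initial data} with radius $\delta'$ requires $\norm{u}_{s,k}\le \delta'/C_0$, so it does not apply as you claim.

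The missing observation is that the modified equation only sees the stable part of the datum. By \eqref{Rewritten fixed-point operator} one has $K_{(u,\varepsilon)}=K_{((I-P_\varepsilon)u,\varepsilon)}$. Since $(I-P_\varepsilon)g_\varepsilon=0$, this gives $(I-P_\varepsilon)U_\varepsilon(v,T)=(I-P_\varepsilon)(v^T+R_\varepsilon(T))$, whose norm is $\lesssim (1+M_\varepsilon)\,\delta/C^2$. You can then finish in either of two ways:
\begin{itemize}
\item apply the proposition to this projected datum with radius $\delta/C$, which is legitimate once $C\ge C_\varepsilon$ is large; or
\item read off from \eqref{Rewritten fixed-point operator}, for the radius-$\delta$ solution, the a posteriori bound $\norm{\phi}_{\mc X}\lesssim \norm{(I-P_\varepsilon)u}_{s,k}+\delta\norm{\phi}_{\mc X}$.
\end{itemize}
Either way the nonlinear contribution to $F(T)$ is $\lesssim \delta^2/C^2\le \delta/(2C)$ for $\delta$ small, uniformly in $C\ge C_\varepsilon$. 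With this fix your argument gives the statement with the uniformity in $(\delta,C)$ it claims.

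\textbf{Your uniqueness claim is weaker than stated.} The theorem asserts uniqueness among all $\phi\in C([0,\infty);X_s^k)$ solving \eqref{Integral equation with modification}. You prove it only within $\mc X_\delta$, and then explicitly downgrade the claim to decaying solutions. The full claim needs a separate, standard argument, which the paper imports from \cite{GloKisSch24}:
\begin{itemize}
\item on any interval $[0,\tau_0]$, two continuous solutions are bounded in $X_s^k$;
\item $S_\varepsilon^X(\tau)$ is uniformly bounded there;
\item $\widehat N_\varepsilon$ is Lipschitz on bounded subsets of $X_s^k$, since Lemma~\ref{Lemma: Schauder} gives a polynomial Lipschitz constant for arbitrary radii, not only for $\delta\le 1$.
\end{itemize}
Gronwall's inequality applied to the difference of the two Duhamel formulas then forces the solutions to coincide on $[0,\tau_0]$, and hence on $[0,\infty)$. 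Without this step you have not proved the uniqueness assertion of the theorem.
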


\begin{proof}
Let $0 < \delta \leq \delta_0$ and $C \geq C_0 \geq 1$ with $\delta_0$ and $C_0$ as in Proposition \ref{Proposition: Cauchy problem with modified initial data}. Let  $\abs{\varepsilon} \leq \overline{\varepsilon}$ and $v \in X_s^k$ with $\norm{v}_{s,k} \leq \frac{\delta}{C^2}$ . Then  we obtain from Lemma \ref{Lemma: Initial data operator} 
\begin{align*}
    \norm{U_{\varepsilon}(v,T)}_{s,k} \lesssim &\norm{v^T}_{s,k} + \abs{T-1}\norm{g_{\varepsilon}}_{s,k} + \norm{R_{\varepsilon}(T)}_{s,k}\\ \lesssim &\frac{\delta}{C^2} + \frac{\delta}{C} A_{\varepsilon} + \frac{\delta^2}{C^2} M_{\varepsilon}
\end{align*}

for every $T \in [1-\frac{\delta}{C}, 1+\frac{\delta}{C}]$ where $A_{\varepsilon}, M_{\varepsilon} > 0$ are some constants depending on $\varepsilon$. If we now choose $\delta$ sufficiently small and $C$ sufficiently large we obtain for every $T \in [1-\frac{\delta}{C}, 1+\frac{\delta}{C}]$  from Proposition \ref{Proposition: Cauchy problem with modified initial data} the existence of a unique $\phi_{\varepsilon} = \phi_{\varepsilon}(v,T) \in \mc{X}_{\delta}$ which solves
\begin{align*}
    \phi_{\varepsilon}(\tau) = S_{\varepsilon}^X(\tau)\left[U_{\varepsilon}(v,T)-C(\phi_{\varepsilon},\varepsilon,U_{\varepsilon}(v,T))\right] + \int_0^{\tau} S_{\varepsilon}^X(\tau-\tau')\widehat{N}_{\varepsilon}(\phi_{\varepsilon}(\tau'))\,d\tau'.
\end{align*}

Since $C$ takes values in $\ran P_{\varepsilon} = \langle g_{\varepsilon} \rangle$ it is enough to show,  given $v$, the existence of a $T$ such that
\begin{align}\label{Correction term orthogonal to unstable subspace}
    \langle C(\phi_{\varepsilon}(v,T),\varepsilon,U_{\varepsilon}(v,T)), g_{\varepsilon}\rangle_{s,k} = 0.
\end{align}

Due to Lemma \ref{Lemma: Initial data operator} and the definition of $C$ this equation reads as
\begin{align*}
    0 = \langle P_{\varepsilon} v^T, g_{\varepsilon} \rangle_{s,k} + (T-1) \norm{g_{\varepsilon}}^2_{s,k} + \langle P_{\varepsilon} R_{\varepsilon}(T), g_{\varepsilon} \rangle_{s,k} +  \langle P_{\varepsilon} \int_0^{\infty} e^{-\tau'} \widehat{N}_{\varepsilon}(\phi_{\varepsilon}(\tau'))\,d\tau', g_{\varepsilon} \rangle_{s,k},
\end{align*}

which can be written as a fixed-point equation for $T \in [1-\frac{\delta}{C}, 1+\frac{\delta}{C}]$,
\begin{align}\label{Fixed-point equation for T}
    T = 1 - \langle P_{\varepsilon} v^T, \widehat{g}_{\varepsilon} \rangle_{s,k} - \langle P_{\varepsilon}  R_{\varepsilon}(T), \widehat{g}_{\varepsilon} \rangle_{s,k} -  \langle P_{\varepsilon} \int_0^{\infty} e^{-\tau'} \widehat{N}_{\varepsilon}(\phi_{\varepsilon}(\tau'))\,d\tau', \widehat{g}_{\varepsilon} \rangle_{s,k}, 
\end{align}

where we have set $\widehat{g}_{\varepsilon} = g_{\varepsilon} / \norm{g_{\varepsilon}}^2_{s,k}$. Now we obtain from the assumptions on $v$, the fact that $\phi_{\varepsilon}$ belongs to $\mc{X}_{\delta}$ and Lemma \ref{Lemma: Initial data operator} as well as Lemma \ref{Local Lipschitz continuity for nonlinearity II} the following estimate
\begin{align*}
    &\abs{\langle P_{\varepsilon} v^T, \widehat{g}_{\varepsilon} \rangle_{s,k}} + \abs{\langle P_{\varepsilon} R_{\varepsilon}(T), \widehat{g}_{\varepsilon} \rangle_{s,k}} +  \abs{\langle P_{\varepsilon} \int_0^{\infty} e^{-\tau'} \widehat{N}_{\varepsilon}(\phi_{\varepsilon}(\tau'))\,d\tau', \widehat{g}_{\varepsilon} \rangle_{s,k}} \\ \lesssim &\frac{\delta}{C^2}A_{\varepsilon} + \frac{\delta^2}{C^2} M_{\varepsilon} + \delta^2 N_{\varepsilon}
\end{align*}

for again some constants $A_{\varepsilon}, M_{\varepsilon}$ and $N_{\varepsilon} >0$.
If we now choose  $C \geq C_{\varepsilon}$ and $0< \delta < \delta_{\varepsilon} $  with $ C_{\varepsilon}> 1$ sufficiently large and $\delta_{\varepsilon} < 1$ sufficiently small we get that the right-hand side of \eqref{Fixed-point equation for T} is a continuous mapping from $[1-\frac{\delta}{C}, 1+\frac{\delta}{C}]$ into itself so that we obtain by the fixed-point theorem of Brouwer a $T_{\varepsilon} \in \left[ 1- \frac{\delta}{C}, 1 + \frac{\delta}{C}\right]$ such that equation \eqref{Correction term orthogonal to unstable subspace} is fulfilled. We therefore conclude that the corresponding solution $\phi_{\varepsilon}(v,T_{\varepsilon})$ satisfies \eqref{Integral equation with modification}. The claimed uniqueness follows along the lines of the proof of Theorem $5.4$ in \cite{GloKisSch24}.

\end{proof}

Now we will show the regularity of the just constructed solution.

\begin{proposition}\label{Upgrade to classical solution}
Let $v \in \mc{S}_r(\R^n)$ satisfy the assumptions of Theorem \ref{Theorem: Corrected blowup time}. The from Theorem \ref{Theorem: Corrected blowup time} guaranteed solution $\phi_{\varepsilon}$ of Eq.~\eqref{Integral equation with modification} is smooth and satisfies \eqref{Cauchy problem for arbitrary initial data} in the classical sense.
\end{proposition}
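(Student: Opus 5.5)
We will go back from similarity variables to the physical variables and use parabolic regularity there. Write $\psi(\tau) := \psi_\varepsilon + \phi_\varepsilon(\tau)$ and define, for $(t,x)\in[0,T_\varepsilon)\times\R^n$,
\begin{align*}
    v(t,x) := \frac{1}{\sqrt{T_\varepsilon-t}}\,\psi\Big(\log\tfrac{T_\varepsilon}{T_\varepsilon-t}\Big)\Big(\tfrac{x}{\sqrt{T_\varepsilon-t}}\Big).
\end{align*}
Once $v$ is shown to be a smooth classical solution of \eqref{Semilinear heat equation in n dimensions}, the statement for $\phi_\varepsilon$ follows. Indeed, $\psi_\varepsilon$ is smooth by Proposition \ref{Proposition: properties of self-similar solution}, and the change of variables is a diffeomorphism, so $\phi_\varepsilon$ is then smooth and satisfies \eqref{Cauchy problem for arbitrary initial data} pointwise.

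\textbf{Step 1: a local classical solution in physical variables.} The initial datum is $v(0,\cdot)=v_\varepsilon^1(0,\cdot)+\varphi_0$. Here $\varphi_0$ is Schwartz and $v_\varepsilon^1(0,\cdot)=\psi_\varepsilon$ is smooth, with all derivatives decaying by \eqref{Decay of psi_e}. The nonlinearity $G_\varepsilon(x,v)=(n-3)|x|^{-3}\eta_\varepsilon(|x|v) = (n-3)v^3 a_\varepsilon(|x|v)$ is smooth in $(x,v)$, because $a_\varepsilon$ is smooth and even and we work with radial functions. We then solve the Duhamel formula for the heat equation in $C([0,T'], C^m_b(\R^n))$ or $H^m(\R^n)$ for large $m$. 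Standard semilinear heat theory (fixed point, then bootstrapping with heat kernel smoothing and differentiating the equation in $t$) gives a unique smooth classical solution on a maximal interval $[0,T^*)$. It also gives the blowup alternative: $\|v(t)\|_{L^\infty}\to\infty$ as $t\to T^*$ if $T^*<\infty$.

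\textbf{Step 2: identification with the $X_s^k$ solution.} We show that the classical solution, transformed to similarity variables and with $\psi_\varepsilon$ subtracted, coincides with $\phi_\varepsilon$ on the corresponding $\tau$-interval. Both objects satisfy the Duhamel formula \eqref{Integral equation for perturbation} with data $U_\varepsilon(v,T_\varepsilon)$ in $X_s^k$. For the classical solution this follows because $S_\varepsilon^X$ restricts the $\mc{H}$-semigroup and classical solutions with decaying derivatives are mild solutions. The Duhamel formula has unique local solutions in $C([0,\tau_0],X_s^k)$ by the local Lipschitz bound of Lemma \ref{Local Lipschitz continuity for nonlinearity II}, so the two agree. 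Since $X_s^k\hookrightarrow L^\infty$ and $\|\phi_\varepsilon(\tau)\|_{s,k}\le\delta e^{-\omega\tau}$, the $L^\infty$ norm of $v(t)$ stays finite for every $t<T_\varepsilon$. By the blowup alternative, $T^*\ge T_\varepsilon$. Hence $v$ is smooth on $[0,T_\varepsilon)\times\R^n$, and $\phi_\varepsilon$ is its transform.

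\textbf{Main obstacle.} The delicate point is Step 2. We must make sure that the classical solution lies in $X_s^k$ and is a mild solution there, which is a question of decay at spatial infinity. The homogeneous $\dot H^s$ component with $s<n/2$ needs decay like $\langle x\rangle^{-1}$, and the profile $\psi_\varepsilon$ has exactly this borderline decay. I would handle this by working only with the difference $v-v_\varepsilon^{T_\varepsilon}$. This difference solves a heat equation with smooth, bounded, decaying potential and initial datum $\varphi_0+\psi_\varepsilon^{T}-\psi_\varepsilon$, which decays like $\langle x\rangle^{-3}$ by the $\Lambda\psi_\varepsilon$ bounds in \eqref{Decay of psi_e}. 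Weighted estimates for this linear-plus-quadratic problem then keep the difference in $\dot H^{s}\cap\dot H^{k}$, using Lemma 2.1 of \cite{DonSchWit25} to convert pointwise decay into membership in $X_s^k$. Uniqueness then closes the argument.
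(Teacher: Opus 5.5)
Your argument is sound in outline, but it takes a genuinely different route from the paper. The paper never leaves similarity variables. Since $v$ is Schwartz, $U_{\varepsilon}(v,T_{\varepsilon})\in\mc{D}(L_{\varepsilon}^X)$, so semigroup theory with the locally Lipschitz $\widehat{N}_{\varepsilon}$ makes $\phi_{\varepsilon}$ a strong solution in $C([0,\infty),\mc{D}(L_{\varepsilon}^X))\cap C^1([0,\infty),X_s^k)$. The embedding $X_s^k\hookrightarrow C^2$ then makes the equation hold pointwise. Finally, the paper rewrites the Duhamel formula with the free semigroup $S(\tau)$, treating $L_{\varepsilon}'$ as a bounded perturbation, and obtains joint smoothness from the smoothing properties of $S(\tau)$ and a Schwarz-type lemma. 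That route relies on an external smoothing result. In return it needs no identification or continuation argument, and it gives smoothness on all of $[0,\infty)$ at once, because the $X_s^k$ solution is already global.

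You instead take regularity from classical parabolic theory in physical variables, which is robust, but the whole burden then moves to Step 2. There you must show that the transformed classical solution, minus $\psi_{\varepsilon}$, lies in $C([0,\tau_0],X_s^k)$ and is an abstract mild solution there. In practice this means proving that symbol-type bounds $\abs{\partial^{\beta}w}\lesssim\langle x\rangle^{-3-\abs{\beta}}$ persist under the flow, together with similar bounds for $\partial_t w$ to get continuity in time, by induction on the derivative order. This can be done, but it is the real work, and your proposal only asserts it.

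Some technical remarks:

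\textbf{Step 1.} The $C^m_b$ fixed point does not close for large $m$, because $\partial_v^{\ell}G_{\varepsilon}(x,v)=(n-3)\abs{x}^{\ell-3}\eta_{\varepsilon}^{(\ell)}(\abs{x}v)$ grows in $\abs{x}$. Also, $H^m$ does not fit data that decay only like $\langle x\rangle^{-1}$. The conclusion of Step 1 still holds. The $L^\infty$ theory only needs $\abs{\partial_v G_{\varepsilon}(x,v)}\lesssim v^2$ uniformly in $x$, which follows from \eqref{Second auxiliary equation}, and smoothness then comes from local parabolic regularity. The same growth is why your weighted estimates in Step 2 must use the symbol structure, not just bounded derivatives.

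\textbf{Decay of the profile.} Your worry about the decay of $\psi_{\varepsilon}$ is unnecessary. Proposition~\ref{Proposition: properties of self-similar solution} already gives $\psi_{\varepsilon}\in X_s^k$, precisely because $s>\frac{n}{2}-1$.

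\textbf{A shortcut.} You can avoid the weighted estimates entirely by running the identification in the other direction. Rewrite the $X_s^k$ Duhamel formula with $S(\tau)$ as the paper does, add the static profile $\psi_{\varepsilon}$, and use \eqref{Representation of free semigroup}. This shows that $\psi_{\varepsilon}+\phi_{\varepsilon}$, transformed back, is a mild solution of \eqref{Semilinear heat equation in n dimensions}. It is bounded on $[0,t_1]\times\R^n$ for every $t_1<T_{\varepsilon}$, since $X_s^k\hookrightarrow L^\infty$. Local parabolic regularity for bounded mild solutions with smooth data then gives smoothness directly, with no blowup alternative needed.
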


\begin{proof}
Due to the regularity of $v$ we first of all obtain $U_{\varepsilon}(v,T_{\varepsilon}) \in \mc{D}(L_{\varepsilon}^X)$ and by the local Lipschitz-continuity of $\widehat{N}_{\varepsilon}$ we obtain from standard semigroup theory that $\phi_{\varepsilon}$ is a strong solution to \eqref{Integral equation with modification}, i.e. it is a strong solution to \eqref{Cauchy problem for arbitrary initial data}.

We therefore have $\phi_{\varepsilon} \in C([0,\infty), \mc{D}(L_{\varepsilon}^X)) \cap C^1([0,\infty), X_s^k(\R^n))$ with
\begin{align}\label{Evolution equation of perturbation in X_s^k}
    \partial_{\tau} \phi_{\varepsilon}(\tau) = L_{\varepsilon} \phi_{\varepsilon}(\tau) + \widehat{N}_{\varepsilon}(\phi_{\varepsilon}(\tau)).
\end{align}

Due to the embedding $X_s^k(\R^n) \hookrightarrow C^2(\R^n)$ \eqref{Evolution equation of perturbation in X_s^k} holds pointwise. If we now use the decomposition $L_{\varepsilon} = L + L_{\varepsilon}'$ and the boundedness of $L_{\varepsilon}'$ we get that $\phi_{\varepsilon}$ solves
\begin{align*}
    \phi_{\varepsilon}(\tau) = S(\tau) U_{\varepsilon}(v,T_{\varepsilon})+\int_0^{\tau}S(\tau-\tau')\left(L_{\varepsilon}'\phi_{\varepsilon}(\tau') + \widehat{N}_{\varepsilon}(\phi_{\varepsilon}(\tau'))\right)\,d\tau'.
\end{align*}

Using the smoothing properties of the free semigroup, see Appendix A.4 from \cite{GloKisSch24}, we obtain $\phi_{\varepsilon}(\tau) \in C^{\infty}(\R^n)$ for every $\tau \geq 0$ and using a generalized version of Schwartz Lemma we are allowed to interchange $\partial_{\tau}$ and $L_{\varepsilon}$ to obtain $\phi_{\varepsilon} \in C^{\infty}([0,\infty) \times \R^n)$.
 
\end{proof}

Now we are finally able to prove our main stability results, Theorem \ref{Theorem: Stability of blowup  solution} and Theorem \ref{Theorem: Stability in normal coordinates}.

\begin{proof}[Proof of Theorem \ref{Theorem: Stability of blowup  solution}]
Under the assumption stated in Theorem \ref{Theorem: Stability of blowup  solution} choose $\omega = \widetilde{\omega}\slash 2$ and $0 < \overline{\varepsilon}$, depending on $\omega$, as at the beginning of Section \ref{Section: Nonlinear Cauchy Problem}. For $\varepsilon \in \R$ with $\abs{\varepsilon} \leq \overline{\varepsilon}$ let  $\delta=\delta_{\varepsilon}$ and $C = C_{\varepsilon}$ denote the constants from Theorem \ref{Theorem: Corrected blowup time} and let $\varphi_0 \in \mc{S}_r(\R^n)$ satisfy $\norm{\varphi_0}_{s,k} < \frac{\delta}{C^2}$.

Then, by Theorem \ref{Theorem: Corrected blowup time} and Proposition \ref{Upgrade to classical solution} there is a  $T = T_{\varepsilon} \in [1-\frac{\delta}{C}, 1 + \frac{\delta}{C}]$ and a unique radial function  $\varphi_{\varepsilon} \in C^{\infty}([0,\infty) \times \R^n)$ solving the initial value problem \eqref{Cauchy problem for perturbation stability analysis}. Moreover,
\begin{align*}
    \norm{\varphi_{\varepsilon}(\tau,\cdot)}_{s,k} \leq \delta e^{-\omega \tau},
\end{align*} 

for all $\tau \geq 0$. We set
\begin{align*}
 v(t,x) : = v_{\varepsilon}^T(t,x) + \frac{1}{\sqrt{T-t}} \varphi_{\varepsilon} \left(\log\left(\frac{T}{T-t}\right) ,\frac{x}{\sqrt{T-t}}   \right). 
\end{align*}

Then $v \in C^{\infty}([0,T)\times \R^n)$ by construction and it satisfies Eq. \eqref{Semilinear heat equation in n dimensions} with
\begin{align*}
    v(0,\cdot) = v_{\varepsilon}^1(0,\cdot) + \varphi_0.
\end{align*} 

Moreover for $r \in [s,k]$,
\begin{align*}
    \norm{\varphi_{\varepsilon}(- \log(T-t) + \log T, \cdot)}_{\dot{H}^r(\R^n)} &  \lesssim \| \phi_{\varepsilon}(\tau) \|_{s,k} \lesssim \delta (T-t)^{\omega} 
\end{align*}

by definition and Theorem \ref{Theorem: Corrected blowup time}.

\end{proof}

\begin{proof}[Proof of Theorem \ref{Theorem: Stability in normal coordinates}]
By the assumptions of Theorem \ref{Theorem: Stability in normal coordinates} the initial data are of the form
\begin{align*}
U_0(x) = U^1_{\varepsilon}(0,x) + x v_0(\abs{x})
\end{align*}

Consequently, $v_0 \in C^{\infty}_e[0,\infty)$ so that $\varphi_0(y) :=   v_0(|y|)$ for $y \in \R^{d+2}$ is a radially symmetric, real-valued Schwartz function $\varphi_0 \in \mc{S}(\R^{d+2})$. By Proposition A.5 and Remark A.6 of \cite{Glo22} there exists a constant $C > 0$ such that 
\begin{align*}
 \norm{\varphi_0}_{X_s^k(\R^{d+2})} \leq C  \norm{\nu_0}_{X_s^k(\R^{d},\R^d)}.
\end{align*}
If the Sobolev exponents $(s,k)$ satisfy condition \eqref{Condition}, then \eqref{Condition on exponents in n-dimensions} holds for $n := d+2$. Let  $\omega, \overline{\varepsilon}, \delta, M >0$ be the constants from Theorem \ref{Theorem: Stability of blowup  solution}. By setting $M_0 := C M$ and requiring 
\begin{align*}
    \norm{\nu_0}_{X_s^k(\R^{d},\R^d)} \leq \frac{\delta}{ M_0},
\end{align*}

we find that $\varphi_0$ satisfies the assumptions of Theorem \ref{Theorem: Stability of blowup  solution}. Hence, there is a $T \in [1-\delta, 1+ \delta]$ and a unique radial solution $v \in C^{\infty}([0,T) \times \R^{d+2})$ to \eqref{Nonlinear heat equation in n dimensions}. If we set $v(t,\cdot) = \tilde v(t,\abs{\cdot})$ then $\tilde v$ solves Eq. \eqref{Cauchy problem 2} for $t \in [0,T)$ and can be written as 
\begin{align*}
    \tilde v(t,\abs{x}) = \frac{1}{\abs{x}} \widetilde{f}_{\varepsilon} \left (\frac{\abs{x}}{\sqrt{T-t}} \right )  + \frac{1}{\sqrt{T-t}} \tilde \varphi \left (\log\left(\frac{T}{T-t}\right), \frac{\abs{x}}{\sqrt{T-t}} \right)
\end{align*}

for $\tilde{\varphi}(t,\cdot) \in  C_e^{\infty}[0,\infty)$ satisfying
\begin{align}\label{Decay for radial phi}
\norm{\tilde{\varphi} (-\log(T-t) + \log T,|\cdot |)}_{\dot{H}^r(\R^{d+2})}\lesssim \delta (T-t)^{\omega}
\end{align}

for all $r \in [s,k]$. We define for $x \in \R^d$, $U(t,x) := x \tilde v(t,\abs{x}) \in C^{\infty}([0,T) \times \R^d, \R^d)$ and find that $U$ can be written as 
\begin{align*}
    U(t,x) = U_{\varepsilon}^T(t,x) + \nu \left (t, \frac{x}{\sqrt{T-t}} \right),
\end{align*}

where $\nu$ is a co-rotational function defined via $\nu(t,x) = x \, \tilde \varphi (-\log(T-t) + \log T, \abs{x})$. The inequality from \eqref{Decay for radial phi} now implies \eqref{Decay of nu} by applying Proposition A.5 and Remark A.6 from \cite{Glo22} and the pointwise as well as the local uniform convergence follow immediately from Sobolev embedding.

\end{proof}

During the preparation of this paper, the author used ChatGPT-5.6 Sol as a supporting tool. It was consulted for language editing and the reformulation of some paragraphs, as well as for some proof strategies in Propositions \ref{Proposition: properties of self-similar solution} and \ref{Projection properties}. All suggestions were independently verified and revised by the author before being incorporated into the manuscript. The author assumes full responsibility for the correctness and integrity of the final work.

\bibliographystyle{plain}
\bibliography{bibliography.bib}

\end{document}